\theoremstyle{plain}
\newtheorem{thm}{Theorem}[section]
\newtheorem{cor}[thm]{Corollary}
\newtheorem{lm}[thm]{Lemma}
\newtheorem{pr}[thm]{Problem}
\theoremstyle{definition}
\newtheorem{de}[thm]{Definition}
\newtheorem{exm}[thm]{Example}
\newcommand{\p}{\mathcal{P}}
\newcommand{\wpf}{\mathcal{P}_{>0}^{<\om}}
\newcommand{\Om}{\Omega}
\newcommand{\om}{\omega}
\newcommand{\cc}{\mathcal{C}}
\newcommand{\vv}{\mathcal{V}}
\newcommand{\mm}{\mathcal{S}}
\newcommand{\typ}{\mho}
\def\={ \approx  }
\newcommand{\Con}{\textup{Con}\kern1pt}
\newcommand{\Clo}{\textup{Clo}\kern1pt}
\newcommand{\id}{\textup{id}}
\newcommand{\fic}{\textup{fi}}
\numberwithin{equation}{section}
\let\emptyset\varnothing
\numberwithin{equation}{thm}
\begin{document}

\title{Closure operators on algebras}

\author{A. Pilitowska$^1$}
\author{A. Zamojska-Dzienio$^{2,\ast}$}

\thanks{$^{\ast}$ Corresponding author.}

\address{Faculty of Mathematics and Information Science\\
Warsaw University of Technology, ul. Koszykowa 75, 00-662 Warsaw,
Poland}

\email{$^1$apili@mini.pw.edu.pl}

\email{$^2$A.Zamojska-Dzienio@mini.pw.edu.pl,
A.Zamojska@elka.pw.edu.pl}

\urladdr{$^1$ http://www.mini.pw.edu.pl/ \textasciitilde apili}
\urladdr{$^2$ http://www.mini.pw.edu.pl/ \textasciitilde azamojsk}
\thanks{While working on this paper, the authors were supported by the
Statutory Grant of Warsaw University of Technology
504P/1120/0025/000.}

\keywords{fully invariant congruence, closure operator, variety,
semilattice, ordered algebra, power algebra}

\subjclass[2010]{08B15,08A30,06A12,06F99}
\date{\today}

\begin{abstract}
We study connections between closure operators on an algebra $(A,\Om)$ and congruences on the extended
power algebra defined on the same algebra. We use these connections to give an alternative
description of the lattice of all subvarieties of semilattice
ordered algebras.
\end{abstract}

\maketitle
\section{Introduction}\label{sec1}

In this paper we study closure operators on algebras. M. Novotn\'y \cite{Nov} and J.~Fuchs \cite{Fu} introduced the notion of \emph{admissible closure operators} for monoids and applied them to study some Galois connections in language theory. In \cite{KP05} M.~Ku\v{r}il and
L.~Pol\'{a}k used such operators to describe the varieties of semilattice ordered
semigroups. In this paper we use a similar approach but apply it
to arbitrary algebras. In
particular, we use concepts from the theory of (extended) power algebras
(see e.g. C.~Brink \cite{B93}). This allows us to generalize and simplify methods presented in \cite{KP05}.

In a natural way one can "lift" any operation defined on a set $A$
to an operation on the set of all non-empty subsets of $A$ and
obtain from any algebra $(A,\Om)$ its \emph{power algebra} of
subsets. Power algebras of non-empty subsets with the additional
operation of set-theoretical union are known as \emph{extended power
algebras}. Extended power algebras play a crucial r\^ole in the theory of \emph{semilattice ordered algebras}.

The main aim of this paper is to investigate how congruences on the
extended power algebra on $(A,\Om)$ and closure operators defined on
$(A,\Om)$ are related. The correspondence obtained allows us to give an alternative
(partially simpler) description of the subvariety lattice of semilattice ordered algebras, comparing
to \cite{PZ12a}.

The paper is organized as follows. In Section 2 we provide basic
definitions and results, among others the notion of semilattice ordered algebras. In Section 3 we investigate the connection between
congruences on the extended power algebra defined on an algebra
$(A,\Om)$ and some special closure operators on the same algebra. In
particular, in Theorem \ref{sec4:cor2} we get a one-to-one
correspondence between the set of all fully invariant congruences
and closure operators satisfying additional conditions. In Section 4
we shortly recall the description of free semilattice ordered
algebras given in \cite{PZ12a} and then in Theorem \ref{varlin} we present a new
characterization of linear varieties. Next we apply results obtained
in Section 3 to describe the lattice of varieties of all semilattice
ordered algebras in Theorem \ref{sec6:thm2}. In Section 5 we show
that there is a correspondence between the set of some special
subvarieties of semilattice ordered $\vv$-algebras and the set of
all closure operators on the $\vv$-free
algebra which satisfy additional conditions (Corollary
\ref{sec4:cor5}). In Section 6 we apply the results to some particular
idempotent varieties of algebras and conclude the paper with a
discussion and open questions.

The notation $t(x_1,\dots,x_n)$ means that the term $t$ of the
language of a variety $\vv$ contains no other variables than
$x_1,\dots,x_n$ (but not necessarily all of them). All operations
considered in the paper are supposed to be finitary. We are
interested here only in varieties of algebras, so the notation
$\mathcal{W}\subseteq\mathcal{V}$ means that $\mathcal{W}$ is a
subvariety of a variety $\mathcal{V}$.

\section{Semilattice ordered algebras}\label{sec2}

Let $\typ$ be the variety of all algebras $(A,\Om)$ of a (fixed)
finitary type $\tau\colon\Om\to \mathbb{N}$ and let
$\vv\subseteq\typ$ be a subvariety of $\typ$. In \cite{PZ12a} we
introduced the following definition of a semilattice ordered
algebra.
\begin{de}\label{sec2:def1}
An algebra $(A,\Om,+)$ is called a \emph{semilattice ordered
$\vv$-algebra} (or briefly \emph{semilattice ordered algebra}) if
$(A,\Om)$ belongs to a variety $\vv$, $(A,+)$ is a (join)
semilattice (with semilattice order $\leq$, i.e. $x\leq y\;
\Leftrightarrow \; x+y=y$), and the operations from the set $\Om$
\emph{distribute} over the operation $+$, i.e. for each $0\neq
n$-ary operation $\om\in \Om$, and $x_1,\ldots,x_i,y_i,\dots,x_n\in
A$
\begin{eqnarray*}
\om(x_1,\ldots,x_i+ y_i,\ldots,x_n)=\label{sec2:eq1}
\\
\om(x_1,\ldots,x_i,\ldots,x_n)+
\om(x_1,\ldots,y_i,\ldots,x_n),\nonumber
\end{eqnarray*}
for any $1\leq i \leq n$.
\end{de}

It is easy to notice that in semilattice ordered algebras all
non-constant $\Om$-operations are \emph{monotone} with respect to
the semilattice order $\leq$. So these algebras are \emph{ordered algebras} in the sense of \cite{CL83}
(see also \cite{F66} and \cite{B76}). Basic examples are given by
additively idempotent semirings, distributive lattices, semilattice
ordered semigroups \cite{GPZ05}, modals \cite{RS85} and semilattice modes \cite{K95}.

Each semilattice ordered algebra $(A,\Om,+)$ has the following two
elementary properties for any elements $x_i\leq y_i$ and $x_{ij}$ of
$A$ for $1\leq i\leq n$, $1\leq j\leq r$ and non-constant $n$-ary
operation $\om\in \Om$:
\begin{align}
&\om(x_1,\ldots, x_n)\leq
\om(y_1,\ldots, y_n), \label{monsub} \\
&\om(x_{11},\ldots, x_{n1})+ \ldots +
\om(x_{1r},\ldots, x_{nr})\label{consub}\\
&\leq \om(x_{11}+ \ldots + x_{1r},\ldots,x_{n1}+\ldots + x_{n
r}).\nonumber
\end{align}
Note that in general both \eqref{monsub} and \eqref{consub} hold
also for all derived operations.

The most important examples from the point of view of applications
are given by the following.
\begin{exm}\label{sec3:exm3}{\bf Extended power
algebras of algebras.}\cite{PZ12a} For a given set $A$ denote by
$\mathcal{P}_{>0}A$ the family of all non-empty subsets of $A$. For
any $n$-ary operation $\om\colon A^n\to A$ we define the
\emph{complex operation} $\om\colon(\mathcal{P}_{>0}A)^n\to
\mathcal{P}_{>0}A$ in the following way:
\begin{equation*}
\om(A_1,\dots,A_n):=\{\om(a_1, \dots, a_n)\mid a_i \in A_i\},
\end{equation*}
where $\emptyset\neq A_1,\dots,A_n\subseteq A$. The set
$\mathcal{P}_{>0}A$ also carries a join semilattice structure under
the set-theoretical union $\cup$. In \cite{JT51} B. J${\rm
\acute{o}}$nsson and A. Tarski proved that complex operations
distribute over the union $\cup$. Hence, for any algebra
$(A,\Om)\in\typ$, the \emph{extended power algebra}
$(\mathcal{P}_{>0}A,\Om,\cup)$ is a semilattice ordered
$\typ$-algebra. The algebra $(\wpf A,\Om,\cup)$ of all finite
non-empty subsets of $A$ is a subalgebra of
$(\mathcal{P}_{>0}A,\Om,\cup)$.
\end{exm}

Example \ref{sec3:exm3} is the special case of the next one.

\begin{exm}\label{sec2:exm3}{\bf Extended power algebras of
relation structures.}\cite{JT51} For a given set $A$ denote by
$\mathcal{P}A$ the family of all subsets of $A$. For any $(n+1)$-ary
relation $R\subseteq A^{n+1}$ on a set $A$ we define the $n$-ary
operation $f_R\colon(\mathcal{P}A)^n\to \mathcal{P}A$ in the
following way:
\begin{equation*}
f_R(A_1,\dots,A_n):=\{y\in A\mid \exists(a_1\in A_1), \dots,
\exists(a_n\in A_n)\; R(a_1,\ldots,a_n,y)\}.
\end{equation*}

By results of B. J${\rm \acute{o}}$nsson and A.
Tarski \cite{JT51} such operations distribute over
the set union $\cup$. Thus, for any relation structure $(A,\mathcal{R})$, the
\emph{extended power algebra} $(\mathcal{P}A,\{f_R\mid R\in \mathcal{R}\},\cup)$ of $(A,\mathcal{R})$, is a
semilattice ordered algebra. The algebra $(\mathcal{P}^{<\om} A,\{f_R\mid R\in \mathcal{R}\},\cup)$
of all finite subsets of $A$ is a subalgebra of
$(\mathcal{P}A,\{f_R\mid R\in \mathcal{R}\},\cup)$.
\end{exm}
Now we provide three other examples of semilattice ordered algebras
which are given by closure operators defined on algebras.

\begin{exm}\label{sec3:exm4}{\bf Closed subsets.}
Let $(A,\Om)\in\typ$ be an algebra without constant operations and
let $\cc\colon\p A\to\p A$ be a closure operator on $A$ such that
for each $n$-ary operation $\om\in \Om$ and any subsets
$A_1,\ldots,A_n\in \p A$
\begin{equation}\label{sec3:eq2}
\om(\cc(A_1),\ldots,\cc(A_n))\subseteq \cc(\om(A_1,\ldots,A_{n})).
\end{equation}
Let $\p_{\cc} A:=\{\cc(B)\mid \emptyset\neq B\subseteq A\}$
be the set of all non-empty $\cc$-closed subsets of $A$. For each $n$-ary
$\omega\in \Om$ define an operation $\omega_{\cc}\colon(\p_{\cc}
A)^n\to \p_{\cc} A$ in the following way
\begin{equation*}
\omega_{\cc}(X_1,\ldots,X_n):=\cc(\omega(X_1,\ldots,X_n)),
\end{equation*}
for $X_1,\ldots,X_n\in  \p_{\cc} A$. Moreover, define a binary operation
$+\colon(\p_{\cc} A)^2\to \p_{\cc} A$
\begin{equation*}
X+Y:=\cc(X\cup Y),
\end{equation*}
for $X,Y\in \p_{\cc} A$.

It is evident that $(\p_{\cc} A,+)$ is a semilattice. Note also that for any subsets $B_1,\ldots,B_n\in \p A$
\begin{align*}
&\cc(\omega(\cc(B_1),\ldots,\cc(B_n)))\stackrel{\eqref{sec3:eq2}}{\subseteq}\cc(\cc(\omega(B_1,\ldots,B_n)))=\\
&\cc(\omega(B_1,\ldots,B_n))\stackrel{\eqref{monsub}}{\subseteq}\cc(\omega(\cc(B_1),\ldots,\cc(B_n))).
\end{align*}
Moreover,
\begin{align*}
&\cc(B_1\cup B_2)\subseteq\cc(\cc(B_1)\cup\cc(B_2))\subseteq\cc(\cc(B_1\cup B_2))=\cc(B_1\cup B_2).
\end{align*}
This implies
\begin{equation}\label{ex3:eq2}
\omega_{\cc}(\cc(B_1),\ldots,\cc(B_n))=\cc(\omega(B_1,\ldots,B_n)),
\end{equation}
and
\begin{equation}\label{ex3:eq3}
\cc(B_1)+\cc(B_2)=\cc(B_1\cup B_2).
\end{equation}
Hence, for $n>0$ and any subsets
$X_1,\ldots,X_i,Y_i,\ldots,X_n \in \p_{\cc} A$
\begin{align*}
&\omega_{\cc}(X_1,\ldots,X_i+Y_i,\ldots,X_n)=\\
&\omega_{\cc}(\cc(X_1),\ldots,\cc(X_i\cup Y_i),\ldots,\cc(X_n))\stackrel{\eqref{ex3:eq2}}{=}\\
&\cc(\omega(X_1,\ldots,X_i\cup Y_i,\ldots,X_n))=\\
&\cc(\omega(X_1,\ldots,X_i,\ldots,X_n)\cup\omega(X_1,\ldots,Y_i,\ldots,X_n))=\\
&\cc(\omega(\cc(X_1),\ldots,\cc(X_i),\ldots,\cc(X_n))\cup\omega(\cc(X_1),\ldots,\cc(Y_i),\ldots,\cc(X_n)))\stackrel{\eqref{ex3:eq3}}{=}\\
&\cc(\omega(\cc(X_1),\ldots,\cc(X_i),\ldots,\cc(X_n)))+\cc(\omega(\cc(X_1),\ldots,\cc(Y_i),\ldots,\cc(X_n)))=\\
&\omega_{\cc}(X_1,\ldots,X_i,\ldots,X_n)+
\omega_{\cc}(X_1,\ldots,Y_i,\ldots,X_n)).
\end{align*}

Then $(\p_{\cc} A,\Om_{\cc},+)$, where
$\Om_{\cc}:=\{\omega_{\cc}\mid \omega\in \Om\}$, is a semilattice
ordered $\typ$-algebra.

The algebra $(\p_{fin\cc} A,\Om_{\cc},+)$, where $\p_{fin\cc} A$ is
the set of all non-empty, finitely generated $\cc$-closed subsets of
$A$, is its subalgebra.
\end{exm}

As a special instance of Example \ref{sec3:exm4} one gets the
following.
\begin{exm}\label{sec3:exm5}{\bf $\Gamma$-sinks.}
Let $(A,\Om)\in\typ$ be an algebra  without constant operations and
let $\Gamma \subseteq \Om$. A subalgebra $(S,\Om)$ of $(A,\Om)$ is
said to be a $\Gamma$-\emph{sink} of $(A,\Om)$ if for each $0<n$-ary
operation $\nu\in \Gamma$ and $i=1,\ldots, n$,
$$\nu(A,\ldots,\underbrace{S}_{i},\ldots,A)\subseteq S.$$

In particular, an $\Om$-sink is a \emph{sink} of $(A,\Om)$, as
defined in \cite[Section 3.6]{RS85} and an $\emptyset$-sink is a
subalgebra of $(A,\Om)$.

Denote by $\langle X \rangle_{\Gamma}$ the $\Gamma$-sink generated
by a non-empty set $X\subseteq A$, i.e. the intersection of all
$\Gamma$-sinks of $(A,\Om)$ that include $X$. The mapping
$\cc_{\Gamma}\colon\p A\to \p A$, $X\mapsto \langle X
\rangle_{\Gamma}$ is a closure operator defined on $A$ (see also
\cite{PZ12}).

Let $\p_{\Gamma}(A)$ denote the set of all non-empty $\Gamma$-sinks
of $(A,\Om)$. An \emph{idempotent} (in the sense that each
singleton is a subalgebra) and \emph{entropic} algebra (any two of
its operations commute) is called a \emph{mode} (see \cite{RS02}).
It was shown in \cite{PZ12} that in a mode $(A,\Om)$ for each
$n$-ary complex operation $\om\in \Om$, $\Gamma\subseteq\Om$ and any
non-empty subsets $X_1,\ldots,X_n$ of $A$, $\om(\langle
X_1\rangle_{\Gamma},\ldots, \langle X_n\rangle_{\Gamma})$ is a
$\Gamma$-sink and additionally
\begin{equation*}
\langle\om(X_1,\ldots,X_n)\rangle_{\Gamma} = \om(\langle
X_1\rangle_{\Gamma},\ldots, \langle X_n\rangle_{\Gamma}).
\end{equation*}
Then, $(\p_{\Gamma}(A),\Om_{\cc_{\Gamma}},+)$ is a semilattice
ordered $\typ$-algebra of non-empty $\Gamma$-sinks of $(A,\Om)$. In
particular, for $\Gamma=\emptyset$ one obtains a
semilattice ordered $\typ$-algebra of non-empty subalgebras of
$(A,\Om)$.
\end{exm}
\begin{exm}{\bf Closure algebras.}\cite{JT51} A \emph{closure
algebra} $(S,c,+)$ is a join-se\-mi\-lat\-tice $(S,+)$ with the smallest
element $0$ endowed with one unary operation $c:S\to S$ satisfying:
$c(0)=0$, $c(s_1+s_2)=c(s_1)+c(s_2)$ and $s\leq c(s)=c(c(s))$.
Clearly, $(S,c,+)$ is a semilattice-ordered unary algebra.

The operation of topological closure makes the power set of any topological space into a closure algebra.
\end{exm}

\section{Congruences versus closure operators}\label{sec3}

Let  $A$ be a set and let $\Theta$ be an equivalence relation on the
set $\wpf{A}$. Define an operator $\cc_{\Theta}\colon\p A\to \p A$,
\begin{align*}
\cc_{\Theta}(T)\colon=\{a\in A\mid \exists (U\in \wpf{T})\;\; U\cup
\{a\}\mathrel{\Theta} U\},\;\;{\rm for} \;\; T\subseteq A.
\end{align*}

Conversely, let $\cc\colon\p A\to\p A$ be an 
operator on $A$. We define a binary relation $\Upsilon_
{\cc}\subseteq \wpf A\times\wpf A$, associated with $\cc$, in the
following way
\begin{equation*}
(Q,R)\in \Upsilon_{\cc} \; \Leftrightarrow \; \cc(Q)=\cc(R).
\end{equation*}
It is clear that $\Upsilon_{\cc}$ is an equivalence relation.

\begin{exm}
\begin{enumerate}
\item Let $A$ be a set. For the identity relation $\mathbf{0}_{\wpf A}=\{(X,X)\colon X\in\wpf A\}$ and
the full relation $\mathbf{1}_{\wpf A}=\wpf A\times \wpf A$,
\begin{equation*}
\cc_{\mathbf{0}_{\wpf A}}=\id_{\p A}\;\; {\rm and} \;\;
\cc_{\mathbf{1}_{\wpf A}}(T)=A, \;{\rm for \; any}\; T\subseteq A.
\end{equation*}

On the other hand, for the closure operator $\id_{\p A}$ and the closure
operator $\cc\colon\p A\to \p A$, defined as $\cc(T)=A$, for each
$T\in\p A$, the corresponding relations are, respectively, the identity
relation $\mathbf{0}_{\wpf A}$ and the full relation $\mathbf{1}_{\wpf A}$.

\item Let $A$ be a set and $\theta$ be a binary relation on $A$. We
can define a binary relation $\Theta$ on $\p A$ as follows: for
$X,Y\subseteq A$
\begin{equation*}
X\mathrel{\Theta} Y \;\;\; \Leftrightarrow \;\;\; (\forall x\in
X)(\exists y\in Y)\; x\mathrel{\theta} y \;\; \text{and} \;\;
(\forall y\in Y)(\exists x\in X)\; x\mathrel{\theta} y.
\end{equation*}
By results of C. Brink \cite{B93} it is known that if $\theta$ is an
equivalence relation on a set $A$, then also the relation $\Theta$
is an equivalence relation on the set $\wpf A$. In such case, for
any $T\in \p A$
\begin{equation*}
\cc_{\Theta}(T)=\{a\in A\mid \exists (t\in T)\;\; a\mathrel{\theta}
t\}=\bigcup\limits_{t\in T}t/\theta.
\end{equation*}
Note also that $\Upsilon_{\cc_{\Theta}}=\Theta$.
\end{enumerate}
\end{exm}

Recall that a closure operator $\cc$ on a set $A$ is
\emph{algebraic} if $\cc(B)=\bigcup\{\cc(F)\mid F\in \wpf B \}$ for
any $B\subseteq A$. We say $\cc$ is an \emph{$\varnothing$-preserving} closure operator
if it has  the property $\cc(\emptyset)=\emptyset$.

\begin{lm}\label{sec5:lem10}
Let $A$ be a set and $\Theta$ be a congruence relation on
$(\wpf{A},\cup)$. Then $\cc_{\Theta}\colon\p A\to \p A$ is an
algebraic $\varnothing$-preserving closure operator.
\end{lm}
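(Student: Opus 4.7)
The plan is to verify in turn that $\cc_\Theta$ is $\varnothing$-preserving, extensive, monotone, idempotent, and algebraic, using the congruence hypothesis on $\Theta$ only where strictly needed.

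First, $\cc_\Theta(\emptyset)=\emptyset$ is immediate since $\wpf{\emptyset}=\emptyset$ leaves no candidate witness $U$. Extensivity follows by choosing $U:=\{a\}$ for any $a\in T$, so that $U\cup\{a\}=U$ and the relation $U\cup\{a\}\mathrel{\Theta}U$ holds by reflexivity. Monotonicity is also cheap: if $T\subseteq S$ then $\wpf{T}\subseteq\wpf{S}$, and any witness $U$ showing $a\in\cc_\Theta(T)$ is simultaneously a witness for $a\in\cc_\Theta(S)$. For algebraicity, the inclusion $\bigcup\{\cc_\Theta(F)\mid F\in\wpf{B}\}\subseteq\cc_\Theta(B)$ is just monotonicity, while the reverse inclusion is essentially tautological: if $a\in\cc_\Theta(B)$ is witnessed by $U\in\wpf{B}$, then $U$ itself is the required finite subset and $a\in\cc_\Theta(U)$.

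The main obstacle, and the only place where the congruence property is used, is idempotence; concretely, the inclusion $\cc_\Theta(\cc_\Theta(T))\subseteq\cc_\Theta(T)$. Given $a\in\cc_\Theta(\cc_\Theta(T))$, pick a witness $V=\{v_1,\dots,v_k\}\in\wpf{\cc_\Theta(T)}$ with $V\cup\{a\}\mathrel{\Theta}V$, and for each $v_i$ pick $U_i\in\wpf{T}$ with $U_i\cup\{v_i\}\mathrel{\Theta}U_i$. Set $W:=U_1\cup\cdots\cup U_k\in\wpf{T}$. Because $\Theta$ is compatible with $\cup$, taking the union of $U_i\cup\{v_i\}\mathrel{\Theta}U_i$ with $W$ and using $U_i\subseteq W$ yields $W\cup\{v_i\}\mathrel{\Theta}W$ for each $i$; iterating over $i$ (again via the congruence property of $\cup$) gives $W\cup V\mathrel{\Theta}W$.

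It remains to combine these with the original relation $V\cup\{a\}\mathrel{\Theta}V$. Taking the union with $W$ on both sides gives $W\cup V\cup\{a\}\mathrel{\Theta}W\cup V$, and taking the union with $\{a\}$ on both sides of $W\cup V\mathrel{\Theta}W$ gives $W\cup V\cup\{a\}\mathrel{\Theta}W\cup\{a\}$. Chaining these with $W\cup V\mathrel{\Theta}W$ by transitivity produces $W\cup\{a\}\mathrel{\Theta}W$, so $a\in\cc_\Theta(T)$ with the finite witness $W\in\wpf{T}$. This closes the argument and shows $\cc_\Theta$ is an algebraic $\varnothing$-preserving closure operator.
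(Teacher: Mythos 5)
Your proposal is correct and follows essentially the same route as the paper: the only nontrivial point is the inclusion $\cc_\Theta(\cc_\Theta(T))\subseteq\cc_\Theta(T)$, which both you and the authors handle by uniting the finite witnesses $U_i\in\wpf{T}$ for the elements of the intermediate witness and chaining $\Theta$-relations via the $\cup$-compatibility of $\Theta$. Your write-up is merely more explicit about the easy parts (extensivity, monotonicity, algebraicity), which the paper dismisses as immediate.
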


\begin{proof}
Let $T\subseteq A$. It is easy to notice that
$\cc_{\Theta}(\emptyset)=\emptyset$, $T\subseteq \cc_{\Theta}(T)$
and for $T\subseteq W\subseteq A$, also
$\cc_{\Theta}(T)\subseteq\cc_{\Theta}(W)$.  It implies
$\cc_{\Theta}(T)\subseteq\cc_{\Theta}(\cc_{\Theta}(T))$.

To prove the opposite inclusion, let $a\in
\cc_{\Theta}(\cc_{\Theta}(T))$. Then there is a finite subset $U$ of
$\cc_{\Theta}(T)$ such that $U\mathrel{\Theta} U\cup \{a\}$. Let
$U=\{u_1,u_2,\ldots, u_k\}$. For each element $u_i$ there is a
finite subset $U_i$ of the set $T$ such that $U_i\mathrel{\Theta}
U_i\cup\{u_i\}$. And, since $\Theta$ is a congruence with respect to
a semilattice operation, we immediately obtain
\begin{equation*}
\bigcup_{i=1}^{k}U_i\mathrel{\Theta}\bigcup_{i=1}^{k}U_i\cup
U\mathrel{\Theta} \bigcup_{i=1}^{k}U_i\cup
U\cup\{a\}\mathrel{\Theta} \bigcup_{i=1}^{k}U_i\cup\{a\}.
\end{equation*}
This means that $a\in \cc_{\Theta}(T)$ and $\cc_{\Theta}$ is an $\varnothing$-preserving
closure operator. Moreover, the condition $\cc_{\Theta}(T)=
\bigcup\{\cc_{\Theta}(U)\mid U \; {\rm is \; a \;finite\; subset \;
of} \; T\}$ is clearly satisfied. Hence, the operator $\cc_{\Theta}$
is an algebraic $\varnothing$-preserving closure operator.
\end{proof}

\begin{lm}\label{sec5:thm2}
Let $(A,\Om)$ be an algebra and let $\Theta$ be a congruence
relation on $(\wpf A,\Om,\cup)$. Then, for each $0<n$-ary operation
$\om\in \Om$, the algebraic $\varnothing$-preserving closure operator $\cc_{\Theta}$
satisfies the condition:
\begin{equation}\label{sec5:eq1}
\om(\cc_{\Theta}(T_1),\ldots,\cc_{\Theta}(T_n))\subseteq
\cc_{\Theta}(\om(T_1,\ldots,T_{n})), \; \text{for} \;\;
T_1,\ldots,T_n\in \wpf A.
\end{equation}
\end{lm}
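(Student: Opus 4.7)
The plan is to unpack the definition of $\cc_\Theta$ pointwise: pick an arbitrary element $b=\om(a_1,\ldots,a_n)$ of the left-hand side, where $a_i\in\cc_\Theta(T_i)$, and exhibit an explicit finite subset of $\om(T_1,\ldots,T_n)$ that witnesses $b\in\cc_\Theta(\om(T_1,\ldots,T_n))$.

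First, for each $i$ I would invoke the definition of $\cc_\Theta$ to obtain a finite set $U_i\in\wpf{T_i}$ with $U_i\mathrel\Theta U_i\cup\{a_i\}$. Since $\Theta$ is a congruence on $(\wpf{A},\Om,\cup)$, applying the complex operation $\om$ coordinatewise to the pairs $(U_i,\,U_i\cup\{a_i\})$ yields
\[
\om(U_1,\ldots,U_n)\mathrel\Theta\om(U_1\cup\{a_1\},\ldots,U_n\cup\{a_n\}).
\]
Call the right-hand side $W$. Since complex operations distribute over $\cup$ (Example \ref{sec3:exm3}), iterating the distributive law expands $W$ into the union of the $2^{n}$ sets $\om(X_1^S,\ldots,X_n^S)$ indexed by $S\subseteq\{1,\ldots,n\}$, with $X_i^S=U_i$ for $i\notin S$ and $X_i^S=\{a_i\}$ for $i\in S$. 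The choice $S=\varnothing$ contributes the summand $\om(U_1,\ldots,U_n)$, and the choice $S=\{1,\ldots,n\}$ contributes $\{\om(a_1,\ldots,a_n)\}=\{b\}$. In particular $b\in W$ and $\om(U_1,\ldots,U_n)\subseteq W$.

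Now I would combine these observations with the semilattice part of the congruence. Because $b\in W$, we have $W=W\cup\{b\}$; because $\Theta$ is compatible with $\cup$, adjoining $\{b\}$ to both sides of $\om(U_1,\ldots,U_n)\mathrel\Theta W$ gives
\[
\om(U_1,\ldots,U_n)\cup\{b\}\mathrel\Theta W\cup\{b\}=W\mathrel\Theta\om(U_1,\ldots,U_n).
\]
Transitivity then yields $\om(U_1,\ldots,U_n)\cup\{b\}\mathrel\Theta\om(U_1,\ldots,U_n)$. Since each $U_i$ is finite, $\om(U_1,\ldots,U_n)$ is a finite subset of $\om(T_1,\ldots,T_n)$, so this is exactly the witness required by the definition of $\cc_\Theta$, giving $b\in\cc_\Theta(\om(T_1,\ldots,T_n))$.

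The only mildly delicate step is the bookkeeping in the distributive expansion of $W$: one needs to see clearly that the two "extreme" summands $\om(U_1,\ldots,U_n)$ and $\{\om(a_1,\ldots,a_n)\}$ both appear, so that $W$ simultaneously contains $b$ and sits $\Theta$-equivalently above $\om(U_1,\ldots,U_n)$. Once that is in place, the $\cup$-compatibility of $\Theta$ does the rest; algebraicity and $\varnothing$-preservation from Lemma \ref{sec5:lem10} are not needed here, only the congruence properties with respect to $\om$ and $\cup$.
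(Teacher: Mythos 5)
Your proof is correct and follows essentially the same route as the paper's: pass to witnesses $U_i\mathrel{\Theta}U_i\cup\{a_i\}$, apply the congruence property of $\om$, and use $\cup$-compatibility to adjoin $b$ to $\om(U_1,\ldots,U_n)$. The only cosmetic differences are that the paper cites the containment \eqref{consub} where you re-derive it by expanding the distributive law, and it leaves implicit the final convexity manipulation that you write out.
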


\begin{proof}
Let $T_1,\ldots,T_n\in \wpf A$ and consider an element $x$ in
$\om(\cc(T_1),\ldots,\cc(T_n))$. This means $x=\om(a_1,\ldots,a_n)$
for some $a_i\in\cc(T_i)$. Then for each $i=1,\ldots ,n$ there
exists a finite subset $U_i$ of $T_i$ such that $U_i\mathrel{\Theta}
U_i\cup\{a_i\}$. Since $\Theta$ is a congruence on $(\wpf
A,\Om,\cup)$, one obtains that
$\om(U_1\cup\{a_1\},\ldots,U_n\cup\{a_n\})\mathrel{\Theta}\om(U_1,\ldots,U_n)$.
Now by \eqref{consub}
$\om(U_1,\ldots,U_n)\cup\{\om(a_1,\ldots,a_n)\}\subseteq
\om(U_1\cup\{a_1\},\ldots,U_n\cup\{a_n\})$. Hence, one has
\begin{equation*}
\om(U_1,\ldots,U_n)\mathrel{\Theta}\om(U_1\cup\{a_1\},\ldots,U_n\cup\{a_n\})\mathrel{\Theta}
\om(U_1,\ldots,U_n)\cup\{\om(a_1,\ldots,a_n)\}.
\end{equation*}
Since $\om(U_1,\ldots,U_n)\subseteq\om(T_1,\ldots,T_n)$, this means
that $x\in\cc_{\Theta}(\om(T_1,\ldots,T_{n}))$ and finishes the
proof.
\end{proof}
For an algebra $(A,\Om)$ denote by $\Con(\wpf A)$ the set of all
congruence relations on $(\wpf A,\Om,\cup)$ and by $\Clo(A)$ the set
of all algebraic $\varnothing$-preserving closure operators on $A$ which satisfy
condition \eqref{sec5:eq1} for each $0<n$-ary operation $\om\in
\Om$.

\begin{lm}\label{sec5:thm1}
Let $(A,\Om)$ be an algebra and let $\cc\in \Clo(A)$. Then the
relation $\Upsilon_{\cc}$ is a congruence on $(\wpf A,\Om,\cup)$.
\end{lm}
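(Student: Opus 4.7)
The plan is to verify directly that $\Upsilon_{\cc}$ is compatible with the semilattice operation $\cup$ and with each $\om\in\Om$. Since it has already been observed that $\Upsilon_{\cc}$ is an equivalence, what remains is the substitution property. The heart of the argument is a pair of absorption identities that say ``closing the arguments first does not change the final closure'', namely
\[
\cc(Q_1\cup Q_2)=\cc(\cc(Q_1)\cup\cc(Q_2))\quad\text{and}\quad \cc(\om(Q_1,\ldots,Q_n))=\cc(\om(\cc(Q_1),\ldots,\cc(Q_n))).
\]
Once these are in hand, the congruence property is immediate: from $\cc(Q_i)=\cc(R_i)$ we simply substitute on the right side of each identity to obtain $\cc(Q_1\cup Q_2)=\cc(R_1\cup R_2)$ and $\cc(\om(Q_1,\ldots,Q_n))=\cc(\om(R_1,\ldots,R_n))$.

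To establish the first absorption identity, I would use the three defining axioms of a closure operator. Monotonicity together with extensivity gives $\cc(Q_1\cup Q_2)\subseteq\cc(\cc(Q_1)\cup\cc(Q_2))$, while monotonicity applied to $\cc(Q_i)\subseteq\cc(Q_1\cup Q_2)$ together with idempotence gives the reverse inclusion. For the second (and more substantial) identity I would argue: monotonicity of the complex operation $\om$ together with $Q_i\subseteq\cc(Q_i)$ yields $\om(Q_1,\ldots,Q_n)\subseteq\om(\cc(Q_1),\ldots,\cc(Q_n))$, hence $\cc(\om(Q_1,\ldots,Q_n))\subseteq\cc(\om(\cc(Q_1),\ldots,\cc(Q_n)))$; for the opposite inclusion I invoke hypothesis \eqref{sec5:eq1} to get $\om(\cc(Q_1),\ldots,\cc(Q_n))\subseteq\cc(\om(Q_1,\ldots,Q_n))$ and then apply $\cc$ and idempotence. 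This is essentially the same chain of inclusions that appears in Example \ref{sec3:exm4}.

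A small subtlety is that the ambient structure is $(\wpf A,\Om,\cup)$, so all $Q_i,R_i$ are finite non-empty subsets; however, their $\cc$-images need not be finite. This is not a genuine issue, because the absorption identities are statements about $\cc$-values (subsets of $A$) rather than about membership in $\wpf A$, and $\wpf A$ is itself closed under both $\cup$ and each $\om$, so $Q_1\cup Q_2$ and $\om(Q_1,\ldots,Q_n)$ remain in $\wpf A$ as required for the congruence relation.

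The main obstacle, such as it is, is the second absorption identity: it is the only place where hypothesis \eqref{sec5:eq1} is used, and it forces the extra inclusion through the nonobvious composition of \eqref{sec5:eq1} with idempotence. Neither the algebraicity of $\cc$ nor the $\varnothing$-preserving property is needed in this lemma; they will matter for the converse direction and for the forthcoming bijection in Theorem \ref{sec4:cor2}.
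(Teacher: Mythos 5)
Your proposal is correct and follows essentially the same route as the paper: the paper's proof also reduces the congruence property to the two absorption identities $\cc(A_1\cup A_2)=\cc(\cc(A_1)\cup\cc(A_2))$ and $\cc(\om(A_1,\ldots,A_n))=\cc(\om(\cc(A_1),\ldots,\cc(A_n)))$, which it cites from Example \ref{sec3:exm4} (derived there by exactly the chain of inclusions you reproduce, with \eqref{sec5:eq1} supplying the nontrivial direction). Your added remarks about finiteness and about algebraicity and $\varnothing$-preservation being unnecessary here are accurate but not part of the paper's argument.
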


\begin{proof}
Let $\cc\in \Clo(A)$ and let $A_1,\ldots,A_n,B_1,\ldots,B_n\in \wpf A$ be such that
$\cc(A_1)=\cc(B_1),\ldots,\cc(A_n)=\cc(B_n)$.  In Example \ref{sec3:exm4} we showed that
\begin{align*}
\cc(\omega(A_1,\ldots,A_n))&=\cc(\om(\cc(A_1),\ldots,\cc(A_n)))\;\;\rm{and}\\
\cc(A_1\cup A_2)&=\cc(\cc(A_1)\cup \cc(A_2)).
\end{align*}
This immediately implies that $\omega(A_1,\ldots,A_n)\;\Upsilon_{\cc}\;\om(B_1,\ldots,B_{n})$ and
$A_1\cup A_2\;\Upsilon_{\cc}\;B_1\cup B_2$.
\end{proof}


\begin{lm}\label{cordod} Let $(A,\Om)$ be an algebra and let $\Theta\in \Con(\wpf A)$. For $Q,R\in\wpf A$
\begin{equation*}
Q\subseteq \cc_{\Theta}(R)\;\;\Leftrightarrow\;\;Q\cup
R\mathrel{\Theta} R.
\end{equation*}
\end{lm}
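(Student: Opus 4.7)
The plan is to prove each implication separately, in both cases using only the compatibility of $\Theta$ with the semilattice operation $\cup$ together with the definition of $\cc_{\Theta}$.

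For the backward direction, I would assume $Q\cup R\mathrel{\Theta} R$ and fix an arbitrary $a\in Q$. Joining $\{a\}\cup R$ to both sides of the congruence and using that $a\in Q$ (so $Q\cup R\cup \{a\}=Q\cup R$) yields $Q\cup R\mathrel{\Theta} R\cup\{a\}$; combining with the hypothesis via transitivity gives $R\mathrel{\Theta} R\cup\{a\}$. Since $R$ itself is a finite subset of $R$ (here I use that $R\in\wpf A$), the witness $U:=R$ shows $a\in\cc_{\Theta}(R)$.

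For the forward direction, I would enumerate $Q=\{q_1,\ldots,q_m\}$, which is finite because $Q\in\wpf A$. For each $q_i\in Q\subseteq\cc_{\Theta}(R)$, the definition of $\cc_{\Theta}$ supplies a finite subset $U_i\subseteq R$ with $U_i\mathrel{\Theta} U_i\cup\{q_i\}$. Joining $R$ to both sides of each such relation (and using $U_i\subseteq R$ to absorb $U_i$ into $R$) gives $R\mathrel{\Theta} R\cup\{q_i\}$ for every $i$. A short induction, at each step joining $\{q_{i+1}\}$ to the two sides of $R\cup\{q_1,\ldots,q_i\}\mathrel{\Theta} R$, produces $R\cup Q\mathrel{\Theta} R$, as desired.

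The main (and only real) technical point is the bookkeeping in the forward direction, namely combining the individually chosen finite witnesses $U_1,\ldots,U_m$ into a single relation on the union. This is essentially the same maneuver used in the proof of Lemma~\ref{sec5:lem10}, so the argument is short and routine once the enumeration of $Q$ is fixed. Neither direction requires the $\Om$-compatibility of $\Theta$; only that $\Theta$ is a semilattice congruence on $(\wpf A,\cup)$.
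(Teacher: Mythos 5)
Your proof is correct and follows essentially the same route as the paper's: the forward direction absorbs each witness $U_i\subseteq R$ into $R$ by joining $R$ to both sides and then combines the resulting relations $R\cup\{q_i\}\mathrel{\Theta}R$ over the finitely many elements of $Q$, exactly as in the paper. In the backward direction the paper simply cites convexity of congruence classes of a semilattice congruence, whereas you unfold that convexity into the explicit join computation $Q\cup R\mathrel{\Theta}R\cup\{a\}$ followed by transitivity; this is the same argument in expanded form, and your closing remark that only $\cup$-compatibility of $\Theta$ is used also matches the paper.
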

\begin{proof}
Let $Q,R\in\wpf A$ and $Q\subseteq \cc_{\Theta}(R)$. This means that
for each $q\in Q$, there exists a subset $S_q\subseteq R$ such that
$S_q\cup\{q\}\mathrel{\Theta}S_q$. This implies $
R\cup\{q\}\mathrel{\Theta}R$ for each $q\in Q$. Hence, $R\cup
Q\mathrel{\Theta}R$.

Since $R\subseteq \{q\}\cup R\subseteq Q\cup R$ and the congruence classes of semilattice congruences are convex, we obtain that
$R\mathrel{\Theta} R\cup \{q\}$. Hence, $q\in \cc_{\Theta}(R)$.
\end{proof}
\begin{thm}\label{sec5:lem2} Let $(A,\Om)$ be an algebra. There is a
one-to-one correspondence between the sets $\Con(\wpf A)$ and
$\Clo(A)$. Namely, for $\Theta\in \Con(\wpf A)$ and $\cc\in \Clo(A)$,
one has
\begin{equation*}
\Theta=\Upsilon_{\cc_{\Theta}}\;\;\text{and}\;\;\cc=\cc_{\Upsilon_{\cc}}.
\end{equation*}
\end{thm}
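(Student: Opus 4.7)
The plan is to verify the two identities $\Theta = \Upsilon_{\cc_\Theta}$ and $\cc = \cc_{\Upsilon_\cc}$ separately; together with Lemmas \ref{sec5:lem10}, \ref{sec5:thm2} and \ref{sec5:thm1}, which guarantee that $\Theta \mapsto \cc_\Theta$ and $\cc \mapsto \Upsilon_\cc$ are well-defined maps between $\Con(\wpf A)$ and $\Clo(A)$, these identities immediately imply that the two maps are mutually inverse.

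For the first identity, I would start by rewriting the statement ``$(Q,R)\in\Upsilon_{\cc_\Theta}$'' as ``$\cc_\Theta(Q)=\cc_\Theta(R)$,'' and then split this equality into the two inclusions $Q\subseteq\cc_\Theta(R)$ and $R\subseteq\cc_\Theta(Q)$. This split is legitimate because by Lemma \ref{sec5:lem10} the operator $\cc_\Theta$ is a closure operator, so $\cc_\Theta(Q)\subseteq\cc_\Theta(R)$ is equivalent to $Q\subseteq\cc_\Theta(R)$ (extensivity in one direction, monotonicity plus idempotency in the other). Now Lemma \ref{cordod} translates these two inclusions into $Q\cup R\mathrel{\Theta}R$ and $Q\cup R\mathrel{\Theta}Q$, whose conjunction is equivalent to $Q\mathrel{\Theta}R$ by symmetry and transitivity of $\Theta$. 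Thus $\Upsilon_{\cc_\Theta}=\Theta$.

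For the second identity, unfold the definition: $\cc_{\Upsilon_\cc}(T)$ is the set of $a\in A$ for which there exists a finite non-empty $U\subseteq T$ with $\cc(U\cup\{a\})=\cc(U)$. For the inclusion $\cc_{\Upsilon_\cc}(T)\subseteq\cc(T)$, given such $U$ and $a$ note that $a\in U\cup\{a\}\subseteq\cc(U\cup\{a\})=\cc(U)\subseteq\cc(T)$ by monotonicity. For the reverse inclusion, take $a\in\cc(T)$; this is where the hypothesis that $\cc$ is \emph{algebraic} (and $\varnothing$-preserving) is crucial, since it provides a finite non-empty $F\subseteq T$ with $a\in\cc(F)$. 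Then $F\cup\{a\}\subseteq\cc(F)$ gives $\cc(F\cup\{a\})\subseteq\cc(\cc(F))=\cc(F)$ by idempotency, while $\cc(F)\subseteq\cc(F\cup\{a\})$ by monotonicity; so $F\cup\{a\}\mathrel{\Upsilon_\cc} F$ and thus $a\in\cc_{\Upsilon_\cc}(T)$.

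The main obstacle is mostly bookkeeping rather than a deep step: one has to be careful that the witnesses $U$ used to define $\cc_\Theta$ are \emph{finite and non-empty}, which is exactly why algebraicity and $\varnothing$-preservation are included in the definition of $\Clo(A)$. Once this is observed, both directions reduce to straightforward applications of Lemma \ref{cordod} and of the standard identities for closure operators; no further properties of $\Om$-operations enter the argument, which is why the result is really a statement about the semilattice reduct.
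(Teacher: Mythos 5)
Your proof is correct and follows essentially the same route as the paper's: both directions of $\Theta=\Upsilon_{\cc_{\Theta}}$ via Lemma \ref{cordod} together with the closure-operator properties from Lemma \ref{sec5:lem10}, and $\cc=\cc_{\Upsilon_{\cc}}$ via algebraicity and $\varnothing$-preservation exactly as in the paper's closing chain of equalities. The only nitpick is that the implication $Q\mathrel{\Theta}R\Rightarrow Q\cup R\mathrel{\Theta}R$ uses compatibility of $\Theta$ with $\cup$ (as in the paper's step $Q\cup R\mathrel{\Theta}R\cup R=R$), not merely symmetry and transitivity as you state.
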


\begin{proof}
Let $Q,R\in \wpf A$ and let $Q\mathrel{\Theta}R$. 
We have that $Q\cup R\mathrel{\Theta} R\cup R=R$. Thus, $Q\subseteq
\cc_{\Theta}(R)$ by Lemma \ref{cordod}. By Lemma \ref{sec5:lem10}, $\cc_{\Theta}$ is a
closure operator, so we have $\cc_{\Theta}(Q)\subseteq
\cc_{\Theta}(R)$. Similarly, we obtain the inclusion
$\cc_{\Theta}(Q)\supseteq \cc_{\Theta}(R)$. Therefore,
$Q\mathrel{\Upsilon_{\cc_{\Theta}}} R$, and $\Theta
\subseteq\Upsilon_{\cc_{\Theta}}$.

On the other hand, let $Q\mathrel{\Upsilon_{\cc_{\Theta}}} R$. By
Lemma \ref{cordod} we obtain that $R\cup Q\mathrel{\Theta}R$ and
$R\cup Q\mathrel{\Theta}Q$. So $R\mathrel{\Theta}Q$, and
consequently, $\Upsilon_{\cc_{\Theta}}\subseteq \Theta$.
%

The rest follows by easy  observation that for an algebraic closure
operator on a set $A$ and any subset $T\subseteq A$ one has
\begin{align*}
&\cc(T)=\bigcup\{\cc(U)\mid U\in \wpf T \}=\bigcup\limits_{U\in \wpf
T}\{a\in A\mid a\in
\cc(U)\}=\\
&\bigcup\limits_{U\in \wpf T}\{a\in A\mid  \cc(U\cup\{a\})=\cc(U) \}
=\{a\in A\mid \exists (U\in \wpf{T})\;\; U\cup
\{a\}\mathrel{\Upsilon_{\cc}} U\}.
\end{align*}
\end{proof}

Now we focus on closure operators from the set $\Clo(A)$
corresponding to fully invariant congruences on $(\wpf A,
\Om,\cup)$. By Lemma \ref{cordod} one immediately obtains
\begin{lm}\label{sec6:lem2}
Let $(A,\Om)$ be an algebra and $\Theta$ be a fully invariant
congruence on $(\wpf A,\Om,\cup)$. Then, for each endomorphism
$\varphi$ of $(\wpf A,\Om,\cup)$, the operator $\cc_{\Theta}$
satisfies the following condition:
\begin{equation}\label{sec6:eq1}
\varphi(\{r\})\subseteq \cc_{\Theta}(\varphi(T)), \;\;for \;\; T\in
\wpf A\;\; and \;\; r\in \cc_{\Theta}(T).
\end{equation}
\end{lm}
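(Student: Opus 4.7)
The plan is to apply Lemma \ref{cordod} twice and sandwich an invocation of full invariance of $\Theta$ in between. First I would translate the hypothesis $r\in\cc_{\Theta}(T)$ into a congruence statement, then transport that statement across $\varphi$, and finally translate back into a containment inside $\cc_{\Theta}$.

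In detail: assume $T\in\wpf A$ and $r\in\cc_{\Theta}(T)$. Then $\{r\}\subseteq\cc_{\Theta}(T)$, so Lemma \ref{cordod} gives $\{r\}\cup T\mathrel{\Theta} T$. Since $\varphi$ is an endomorphism of $(\wpf A,\Om,\cup)$, it respects $\cup$, so $\varphi(\{r\}\cup T)=\varphi(\{r\})\cup\varphi(T)$. Because $\Theta$ is fully invariant, applying $\varphi$ to both sides of the $\Theta$-related pair preserves $\Theta$, yielding
\begin{equation*}
\varphi(\{r\})\cup\varphi(T)\mathrel{\Theta}\varphi(T).
\end{equation*}
Lemma \ref{cordod}, applied in the other direction with $Q:=\varphi(\{r\})$ and $R:=\varphi(T)$, then returns exactly $\varphi(\{r\})\subseteq\cc_{\Theta}(\varphi(T))$, as required.

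There is essentially no obstacle here; the only thing to be careful about is that $\varphi(\{r\})$ and $\varphi(T)$ are again non-empty finite subsets of $A$ (so that Lemma \ref{cordod} is applicable to them), which is automatic because $\varphi$ maps $\wpf A$ into itself. The force of the lemma really lies in the interplay built up in the previous results: Lemma \ref{cordod} encodes the $\cc_{\Theta}$-closure as a semilattice-congruence condition, which is the precise form that interacts well with the definition of a fully invariant congruence.
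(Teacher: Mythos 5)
Your argument is correct and is precisely the intended one: the paper states the lemma with the remark ``By Lemma \ref{cordod} one immediately obtains,'' and the immediate derivation is exactly your two applications of Lemma \ref{cordod} with full invariance of $\Theta$ applied in between. Nothing to add.
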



\begin{exm}\label{sec6:exm1}
Let $(A,\Om)$ be a mode (idempotent and entropic algebra). Let
us define on the set $\mathcal{P}^{<\omega}_{>0}A$ the following
relation: for $X,Y\in \mathcal{P}^{<\omega}_{>0}A$
\begin{equation*}
X \mathrel{\varrho} Y \;\; \Leftrightarrow \;\;\langle X\rangle=\langle
Y\rangle,
\end{equation*}
where $\langle X\rangle$ denotes the subalgebra of $(A,\Om)$
generated by $X$.

By results of \cite{PZ12}, the relation $\varrho$ is a fully
invariant congruence on $(\mathcal{P}^{<\omega}_{>0}A,\Om,\cup)$.
This implies that the mapping $\cc_{\varrho}:\mathcal{P}A\to
\mathcal{P}A$, $T\mapsto \langle T\rangle$, is an algebraic $\varnothing$-preserving
closure operator on $\mathcal{P}^{<\omega}_{>0}A$, which satisfies
the conditions \eqref{sec5:eq1} and \eqref{sec6:eq1}.
\end{exm}


\begin{exm} It was shown in \cite{B93} that if $f\colon A\to B$ is
a homomorphism of $\Om$-algebras $(A,\Om)$ and $(B,\Om)$, then so is
$f^{+}\colon\wpf A\to \wpf B$, where $f^{+}(S):=\{f(s)\mid s\in S\}$
for any $S\in\wpf A$. Obviously, $f^{+}$ is also a homomorphism with
respect to $\cup$. If $f$ is an endomorphism of the algebra
$(A,\Om)$, then $f^{+}$ is an endomorphism of the algebra $(\wpf
A,\Om,\cup)$. Let $\Theta$ be a fully invariant congruence on $(\wpf
A,\Om,\cup)$. Then, for each endomorphism $f$ of $(A,\Om)$, the
operator $\cc_{\Theta}$ satisfies the following condition:
\begin{equation*}\label{sec6:eq11}
f^{+}(\cc_{\Theta}(T))\subseteq \cc_{\Theta}(f^{+}(T)), \;\;for \;\;
T\in \wpf A.
\end{equation*}
\end{exm}
\vskip 5mm For an algebra $(A,\Om)$ let us denote by $\Con_{\fic}(\wpf
A)$ the set of all fully invariant
congruence relations in $\Con(\wpf A)$ and by $\Clo_{\fic}(A)$  
the set of all closure operators in $\Clo(A)$ which satisfy condition
\eqref{sec6:eq1}. By Lemma \ref{sec5:thm1}
one immediately obtains
\begin{lm}\label{sec6:lem3}
Let $(A,\Om)$ be an algebra and $\cc\in \Clo_{\fic}(A)$. Then the
relation $\Upsilon_{\cc}$ is a fully invariant congruence on $(\wpf
A,\Om,\cup)$.
\end{lm}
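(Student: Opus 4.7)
The plan is to combine Lemma \ref{sec5:thm1} with the extra hypothesis \eqref{sec6:eq1} defining $\Clo_{\fic}(A)$. By Lemma \ref{sec5:thm1}, $\Upsilon_{\cc}$ is already a congruence on $(\wpf A,\Om,\cup)$, so the only thing left to verify is invariance under an arbitrary endomorphism $\varphi$ of $(\wpf A,\Om,\cup)$: if $Q\mathrel{\Upsilon_{\cc}}R$, i.e. $\cc(Q)=\cc(R)$, then $\cc(\varphi(Q))=\cc(\varphi(R))$.

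By symmetry I would only establish one inclusion, say $\cc(\varphi(Q))\subseteq\cc(\varphi(R))$. Since $Q\subseteq\cc(Q)=\cc(R)$, every element $q\in Q$ lies in $\cc(R)$. Applying condition \eqref{sec6:eq1} with $T=R$ and $r=q$, I get $\varphi(\{q\})\subseteq\cc(\varphi(R))$ for each $q\in Q$.

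Now use that $\varphi$ is a homomorphism with respect to $\cup$ (so in particular preserves finite unions, noting $Q$ is a finite non-empty set): $\varphi(Q)=\varphi\bigl(\bigcup_{q\in Q}\{q\}\bigr)=\bigcup_{q\in Q}\varphi(\{q\})\subseteq\cc(\varphi(R))$. Since $\cc$ is a closure operator, applying $\cc$ and using idempotency gives $\cc(\varphi(Q))\subseteq\cc(\cc(\varphi(R)))=\cc(\varphi(R))$. The reverse inclusion is identical after swapping $Q$ and $R$, so $\varphi(Q)\mathrel{\Upsilon_{\cc}}\varphi(R)$, and $\Upsilon_{\cc}$ is fully invariant.

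There is no real obstacle here: the lemma is essentially a one-line consequence of \eqref{sec6:eq1} once one observes that $\varphi$ distributing over $\cup$ lets one reduce the problem pointwise to singletons, which is precisely the content of \eqref{sec6:eq1}. The only subtle point worth flagging in the write-up is that $Q$ is a \emph{finite} non-empty subset, so the decomposition $Q=\bigcup_{q\in Q}\{q\}$ is a valid finite join to which the semilattice homomorphism $\varphi$ applies term by term.
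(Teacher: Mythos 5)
Your proof is correct and follows exactly the route the paper intends: the paper gives no explicit argument, merely stating that the lemma follows ``immediately'' from Lemma \ref{sec5:thm1} together with the defining condition \eqref{sec6:eq1} of $\Clo_{\fic}(A)$. Your write-up simply supplies the omitted details (reducing $\varphi(Q)$ to singletons via the $\cup$-homomorphism property and invoking \eqref{sec6:eq1} pointwise), and it does so correctly.
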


\begin{exm}\cite{PZ12}\label{sec6:exm2}
Let $\mathcal{M}$ be a variety of $\Om$-modes and
$(F_{\mathcal{M}}(X),\Om)$ be the $\mathcal{M}$-free mode over a set
$X$. Let $\Gamma\subseteq\Om$ and let $\cc_{\Gamma}\colon\p
F_{\mathcal{M}}(X)\to \p F_{\mathcal{M}}(X)$ be the closure operator
described in Example \ref{sec3:exm5}. As we have already noticed in
\cite{PZ12}, for each $n$-ary complex operation $\om\in \Om$, and
any non-empty subsets $X_1,\ldots,X_n$ of $F_{\mathcal{M}}(X)$,
\begin{equation*}
\cc_{\Gamma}(\om(X_1,\ldots,X_n)) =
\om(\cc_{\Gamma}(X_1),\ldots,\cc_{\Gamma}(X_n)).
\end{equation*}

Moreover, we proved in \cite{PZ12} that for any $T\in \wpf
F_{\mathcal{M}}(X)$ and any endomorphism $\varphi$ of $(\wpf
F_{\mathcal{M}}(X),\Om,\cup)$, if $r\in \cc_{\Gamma}(T)$ then we
have $\varphi(\{r\})\subseteq \cc_{\Gamma}(\varphi(T))$. Hence,
$\cc_{\Gamma}$ is an algebraic $\varnothing$-preserving closure operator which satisfies
conditions \eqref{sec5:eq1} and \eqref{sec6:eq1}. Therefore, the
relation $\Theta_{\cc_{\Gamma}}\subseteq\wpf
F_{\mathcal{M}}(X)\times \wpf F_{\mathcal{M}}(X)$ such that
\begin{equation*}
(Q,R)\in \Theta_{\cc_{\Gamma}} \;\; \Leftrightarrow\;\; \langle
Q\rangle_{\Gamma}=\langle R\rangle_{\Gamma},
\end{equation*}
is a fully invariant congruence relation on $(\wpf
F_{\mathcal{M}}(X),\Om,\cup)$.
\end{exm}

By Theorem \ref{sec5:lem2}, Lemma \ref{sec6:lem2} and Lemma
\ref{sec6:lem3} we have
\begin{thm}\label{sec4:cor2}
Let $(A,\Om)$ be an algebra. There is a one-to-one correspondence
between the sets $\Con_{\fic}(\wpf A)$ and $\Clo_{\fic}(A)$.
\end{thm}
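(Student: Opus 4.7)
The plan is to leverage the bijection already established in Theorem \ref{sec5:lem2} between $\Con(\wpf A)$ and $\Clo(A)$, and simply verify that this bijection restricts to the distinguished subsets $\Con_{\fic}(\wpf A)\subseteq \Con(\wpf A)$ and $\Clo_{\fic}(A)\subseteq\Clo(A)$. Since Theorem \ref{sec5:lem2} already supplies mutually inverse assignments $\Theta\mapsto \cc_{\Theta}$ and $\cc\mapsto\Upsilon_{\cc}$, the whole content of the present theorem is that each of these maps sends the full-invariance condition on one side to the corresponding axiom \eqref{sec6:eq1} on the other side.

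I would argue in two directions. First, if $\Theta\in\Con_{\fic}(\wpf A)$, then $\cc_{\Theta}\in\Clo(A)$ by Lemmas \ref{sec5:lem10} and \ref{sec5:thm2}, while Lemma \ref{sec6:lem2} (applied with $\varphi$ any endomorphism of $(\wpf A,\Om,\cup)$) shows that $\cc_{\Theta}$ satisfies \eqref{sec6:eq1}; hence $\cc_{\Theta}\in\Clo_{\fic}(A)$. Conversely, if $\cc\in\Clo_{\fic}(A)$ then $\Upsilon_{\cc}\in\Con(\wpf A)$ by Lemma \ref{sec5:thm1}, and Lemma \ref{sec6:lem3} promotes it to a fully invariant congruence, so $\Upsilon_{\cc}\in\Con_{\fic}(\wpf A)$.

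Combining these two inclusions with Theorem \ref{sec5:lem2} we obtain that the restriction of $\Theta\mapsto\cc_{\Theta}$ to $\Con_{\fic}(\wpf A)$ and the restriction of $\cc\mapsto\Upsilon_{\cc}$ to $\Clo_{\fic}(A)$ are well defined and remain mutually inverse, proving the claimed one-to-one correspondence. There is no genuine obstacle here beyond this bookkeeping verification, since all the substantive work has been carried out in the preceding lemmas; the only thing worth double-checking is that \eqref{sec6:eq1} is exactly the translation of full invariance under the correspondence, which is precisely what Lemmas \ref{sec6:lem2} and \ref{sec6:lem3} assert in opposite directions.
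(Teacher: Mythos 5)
Your proposal is correct and follows essentially the same route as the paper, which derives the theorem directly from Theorem \ref{sec5:lem2} together with Lemmas \ref{sec6:lem2} and \ref{sec6:lem3}. The only difference is that you spell out the bookkeeping (that the bijection of Theorem \ref{sec5:lem2} restricts to the fully invariant/condition-\eqref{sec6:eq1} subsets) which the paper leaves implicit.
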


Let us define on $\Clo_{\fic}(A)$
a binary relation $\leq$ in the following way: for any
$\cc_1,\cc_2\in \Clo_{\fic}(A)$
\begin{equation*}
\cc_1\leq\cc_2\;\Leftrightarrow\;\Upsilon_ {\cc_2}\subseteq\Upsilon_
{\cc_1}.
\end{equation*}
Obviously, $(\Clo_{\fic}(A),\leq)$
is a complete lattice. \vskip5mm In Section 5 we use some
special operators from the set $\Clo_{\fic}(A)$ to study particular
subvarieties of semilattice ordered algebras. To do it, we need the
following sequence of results; their proofs are straightforward.

Let $A$ be a set and $\Theta$ be an equivalence relation on $\wpf
A$. We can define a binary relation $\widetilde{\Theta}\subseteq
A\times A$ in the following way: for $a,b\in A$
\begin{equation*}
(a,b)\in \widetilde{\Theta} \;\; \Leftrightarrow\;\;
(\{a\},\{b\})\in \Theta.
\end{equation*}
Clearly, $\widetilde{\Theta}$ is an equivalence relation.
Additionally, if $(A,\Om)$ is an algebra and $\Theta$ is a
congruence on $(\wpf A,\Om,\cup)$, then also  $\widetilde{\Theta}$
is a  congruence relation on $(A,\Om)$.


\begin{lm}\label{sec5:lem9}
Let $I$ be a set and for each $i\in I$, let $\Theta_i$ be an
equivalence relation on $\wpf A$. Then
\begin{equation*}
\bigcap\limits_{i\in
I}\widetilde{\Theta}_i=\widetilde{\bigcap\limits_{i\in I}\Theta_i}.
\end{equation*}
\end{lm}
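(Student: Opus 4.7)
The plan is to prove the set-theoretic equality by a direct chain of biconditionals unfolding the definition of the tilde operation. Since the statement concerns arbitrary equivalence relations (no algebraic structure is involved here), no properties beyond the definition $(a,b)\in\widetilde{\Theta}\Leftrightarrow(\{a\},\{b\})\in\Theta$ and the set-theoretic definition of intersection are needed.

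Concretely, I would fix arbitrary elements $a,b\in A$ and argue as follows: $(a,b)\in\bigcap_{i\in I}\widetilde{\Theta}_i$ holds if and only if $(a,b)\in\widetilde{\Theta}_i$ for every $i\in I$; by the definition of $\widetilde{\Theta}_i$ this is equivalent to $(\{a\},\{b\})\in\Theta_i$ for every $i\in I$; which in turn is equivalent to $(\{a\},\{b\})\in\bigcap_{i\in I}\Theta_i$; and finally, by the definition of the tilde operation applied to the intersection, this is equivalent to $(a,b)\in\widetilde{\bigcap_{i\in I}\Theta_i}$. Since $a,b$ were arbitrary, the two relations coincide.

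There is essentially no obstacle here: the lemma is a book-keeping fact asserting that the map $\Theta\mapsto\widetilde{\Theta}$ (restriction to singletons in the first coordinate of $\wpf A\times\wpf A$) commutes with arbitrary intersections. The only thing to watch is to be explicit that $I$ may be empty, in which case both sides equal $A\times A$, so the equality still holds. The result will be invoked later (together with the observation that $\widetilde{\Theta}$ inherits congruence from $\Theta$) to show that the assignment $\Theta\mapsto\widetilde{\Theta}$ preserves meets in the relevant congruence lattices.
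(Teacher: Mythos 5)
Your proof is correct and is exactly the straightforward element-chasing argument the paper has in mind (the paper omits the proof entirely, remarking only that the proofs of this sequence of results are straightforward). The chain of biconditionals unfolding the definition of $\widetilde{\Theta}$ is complete, and your remark about the empty index set is a harmless extra precaution.
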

For a set $Q\in \wpf A$ and a congruence $\alpha$ on $(A,\Om)$, let
$Q^{\alpha}$ denote the set $\{q/\alpha\mid q\in Q\}$. Let $\Psi\in
\Con(\wpf A)$ be such that $\widetilde{\Psi}=\alpha$. We define a
relation $\delta_{\Psi}\subseteq\wpf (A/\alpha)\times
\wpf(A/\alpha)$ in the following way:
\begin{equation}\label{rel_delta}
(B^\alpha,C^\alpha)\in \delta_{\Psi}\;\; \Leftrightarrow\;\;
(B,C)\in \Psi
\end{equation}
for $B,C\in \wpf A$. Conversely, let $\psi\in \Con(\wpf(A/\alpha))$
be such that $\widetilde{\psi}=\mathbf{0}_{A/\alpha}$. Define a relation
$\Delta_{\psi}\subseteq\wpf A\times \wpf A$:
\begin{equation}\label{rel_D}
(B,C)\in\Delta_{\psi}\;\; \Leftrightarrow\;\; (B^\alpha,C^\alpha)\in
\psi
\end{equation}
for $B,C\in \wpf A$.
Straightforward calculations show that the definitions of both relations are correct. Moreover,  $\delta_{\Psi}\in
\Con(\wpf(A/\alpha))$ and $\Delta_{\psi}\in \Con(\wpf A)$.
\begin{thm}\label{sec5:lem8}
Let $(A,\Om)$ be an algebra, $\alpha$ be a congruence on $(A,\Om)$,
$\Theta$ be a congruence relation on $(\wpf A,\Om,\cup)$ with
$\widetilde{\Theta}=\alpha$, and $\psi$ be a congruence on
$(\wpf(A/\alpha),\Om,\cup)$ such that
$\widetilde{\psi}=\mathbf{0}_{A/\alpha}$. Then
\begin{equation}\label{sec4:eq1}
\Theta=\Delta_{\delta_{\Theta}}\;\; and
\end{equation}
\begin{equation}\label{sec4:eq2}
\psi=\delta_{\Delta_{\psi}}.
\end{equation}
\end{thm}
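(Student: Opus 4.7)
The plan is to prove each of the two identities by direct unfolding of the defining biconditionals (\ref{rel_delta}) and (\ref{rel_D}), noting carefully where the hypotheses $\widetilde{\Theta}=\alpha$ and $\widetilde{\psi}=\mathbf{0}_{A/\alpha}$ are used. These hypotheses are not invoked inside the final chain of equivalences; rather they are what makes the compositions $\Delta_{\delta_{\Theta}}$ and $\delta_{\Delta_{\psi}}$ refer to well-defined objects on $\wpf A$ and $\wpf(A/\alpha)$ respectively. In particular, the excerpt already records that both $\delta_{\Psi}$ and $\Delta_{\psi}$ are well-defined congruences under the standing assumptions, so I may quote that fact.

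For (\ref{sec4:eq1}), I would take arbitrary $B,C\in\wpf A$ and simply compute
\[
(B,C)\in\Delta_{\delta_{\Theta}}\;\Longleftrightarrow\;(B^{\alpha},C^{\alpha})\in\delta_{\Theta}\;\Longleftrightarrow\;(B,C)\in\Theta,
\]
where the first equivalence is the definition (\ref{rel_D}) applied to $\psi=\delta_{\Theta}$, and the second is (\ref{rel_delta}) applied to $\Psi=\Theta$ with the particular representatives $B,C$ of $B^{\alpha},C^{\alpha}$. This already gives the required set equality.

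For (\ref{sec4:eq2}), I would take arbitrary $X,Y\in\wpf(A/\alpha)$ and lift them, choosing $B,C\in\wpf A$ with $B^{\alpha}=X$ and $C^{\alpha}=Y$ (which is always possible, since each $\alpha$-class has a representative). Then
\[
(X,Y)\in\delta_{\Delta_{\psi}}\;\Longleftrightarrow\;(B,C)\in\Delta_{\psi}\;\Longleftrightarrow\;(B^{\alpha},C^{\alpha})\in\psi\;\Longleftrightarrow\;(X,Y)\in\psi,
\]
by (\ref{rel_delta}) applied to $\Psi=\Delta_{\psi}$, followed by (\ref{rel_D}) applied to $\psi$.

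I do not anticipate a genuine obstacle: both equalities are really just ``round trips'' in the definitions. The only place the proof might look substantive is if one insists on re-verifying well-definedness of $\delta_{\Theta}$, where one uses $\widetilde{\Theta}=\alpha$ to show that $B^{\alpha}=B'^{\alpha}$ forces $B\mathrel{\Theta}B'$ (pair each $b\in B$ with some $b'\in B'$ having $(b,b')\in\alpha$, so $\{b\}\mathrel{\Theta}\{b'\}$, and combine via the $\cup$-congruence to get $B\cup B'\mathrel{\Theta}B$ and $B\cup B'\mathrel{\Theta}B'$). Since this is exactly the ``straightforward calculation'' already invoked in the paper just before the theorem, I would treat it as given and keep the proof to the short double chain of equivalences above.
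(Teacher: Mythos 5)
Your proof is correct. The paper itself omits the argument (it introduces this theorem among a ``sequence of results; their proofs are straightforward''), and your double chain of equivalences, together with the observation that the hypotheses $\widetilde{\Theta}=\alpha$ and $\widetilde{\psi}=\mathbf{0}_{A/\alpha}$ are used only to make $\delta_{\Theta}$ and $\Delta_{\psi}$ well defined (and to give $\widetilde{\delta_{\Theta}}=\mathbf{0}_{A/\alpha}$, $\widetilde{\Delta_{\psi}}=\alpha$ so that the outer operators apply), is exactly the intended straightforward unfolding of definitions (\ref{rel_delta}) and (\ref{rel_D}).
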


\begin{lm}\label{sec4:lem3} Let $A$ be a set, $\Theta$ be an
equivalence relation on $\wpf A$ such that
$\widetilde{\Theta}=\mathbf{0}_A$. Then, the closure operator $\cc_{\Theta}$
satisfies the following condition: for $a,b\in A$
\begin{equation}\label{sec5:eq5}
\cc_{\Theta}(\{a\})=\cc_{\Theta}(\{b\})  \;\; \Leftrightarrow\;\;
a=b.
\end{equation}
Conversely, let $\cc\colon\p A\to\p A$ be a closure operator on $A$
satisfying the condition \eqref{sec5:eq5}, then
$\widetilde{\Upsilon}_{\cc}=\mathbf{0}_A$.
\end{lm}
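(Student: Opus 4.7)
The plan is to split the statement into its two parts and handle each via a direct unwinding of the defining formula for $\cc_{\Theta}$ or $\Upsilon_{\cc}$, together with elementary properties of equivalence relations.

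For the first part, fix $\Theta$ with $\widetilde{\Theta}=\mathbf{0}_A$. The key observation is that $\{a\}$ is the only non-empty finite subset of itself, so the definition of $\cc_{\Theta}$ collapses for singleton inputs to: $c\in\cc_{\Theta}(\{a\})$ iff $\{a,c\}\mathrel{\Theta}\{a\}$. The backward implication of \eqref{sec5:eq5} is then trivial; for the forward one, suppose $\cc_{\Theta}(\{a\})=\cc_{\Theta}(\{b\})$. Extensivity of $\cc_{\Theta}$ (which holds for any equivalence $\Theta$, since $\{a\}\mathrel{\Theta}\{a\}$ by reflexivity) puts both $a$ and $b$ in the common closure. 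The singleton simplification then gives $\{a,b\}\mathrel{\Theta}\{a\}$ and $\{a,b\}\mathrel{\Theta}\{b\}$, whence transitivity of $\Theta$ forces $\{a\}\mathrel{\Theta}\{b\}$, and the hypothesis $\widetilde{\Theta}=\mathbf{0}_A$ yields $a=b$.

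The converse part is pure definition chasing: by the definitions of $\widetilde{(\cdot)}$ and of $\Upsilon_{\cc}$, one has $(a,b)\in\widetilde{\Upsilon}_{\cc}$ iff $(\{a\},\{b\})\in\Upsilon_{\cc}$ iff $\cc(\{a\})=\cc(\{b\})$, and the assumed condition \eqref{sec5:eq5} equates the last statement with $a=b$; thus $\widetilde{\Upsilon}_{\cc}=\mathbf{0}_A$.

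There is no genuine obstacle here. The only subtle point worth flagging is the reduction for singleton inputs: the existential quantifier in the definition of $\cc_{\Theta}$ has only one candidate subset $U$, and this collapse is what enables the chain $\{a\}\mathrel{\Theta}\{a,b\}\mathrel{\Theta}\{b\}$ that drives the nontrivial implication. No congruence assumption on $\Theta$ is used; everything rests on reflexivity and transitivity alone.
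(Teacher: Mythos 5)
Your proof is correct: the singleton collapse of the definition of $\cc_{\Theta}$, the chain $\{a\}\mathrel{\Theta}\{a,b\}\mathrel{\Theta}\{b\}$ via transitivity, and the definition-chasing for the converse are exactly the intended routine verification, and your observation that only reflexivity and transitivity of $\Theta$ are needed is accurate. The paper itself omits the proof, declaring it (together with the surrounding lemmas) ``straightforward,'' so there is nothing to contrast with.
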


By Theorem \ref{sec4:cor2} and Lemma \ref{sec4:lem3} one immediately
has

\begin{cor}\label{sec5:cor2} For an algebra $(A,\Om)$ there is a
one-to-one correspondence between the sets $\{\Theta\in
\Con_{\fic}(\wpf A)\;|\;\widetilde{\Theta}=\mathbf{0}_A\}$ and $\{\cc\in
\Clo_{\fic}(A)\;|\;\cc\;\;\text{satisfies}\;\; \eqref{sec5:eq5}\}$.
\end{cor}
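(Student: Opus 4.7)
The plan is to restrict the bijection of Theorem \ref{sec4:cor2} to the two subsets in question. By that theorem the maps $\Theta \mapsto \cc_{\Theta}$ and $\cc \mapsto \Upsilon_{\cc}$ are already mutually inverse bijections between $\Con_{\fic}(\wpf A)$ and $\Clo_{\fic}(A)$. Consequently, the only thing to check is that these maps preserve the extra side-conditions $\widetilde{\Theta} = \mathbf{0}_A$ and \eqref{sec5:eq5}; once that is verified, restricting the two maps to the prescribed subsets automatically yields mutually inverse bijections.

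First I would take $\Theta \in \Con_{\fic}(\wpf A)$ with $\widetilde{\Theta} = \mathbf{0}_A$ and apply the forward direction of Lemma \ref{sec4:lem3} to conclude that $\cc_{\Theta}$ satisfies \eqref{sec5:eq5}; together with $\cc_{\Theta} \in \Clo_{\fic}(A)$, which comes from Theorem \ref{sec4:cor2}, this places $\cc_{\Theta}$ in the right-hand set. Conversely I would take $\cc \in \Clo_{\fic}(A)$ satisfying \eqref{sec5:eq5} and apply the reverse direction of Lemma \ref{sec4:lem3} to obtain $\widetilde{\Upsilon_{\cc}} = \mathbf{0}_A$; combined with the membership $\Upsilon_{\cc} \in \Con_{\fic}(\wpf A)$ from Lemma \ref{sec6:lem3}, this places $\Upsilon_{\cc}$ in the left-hand set.

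There is no real obstacle here: the corollary is essentially the packaging of Theorem \ref{sec4:cor2} together with the two halves of Lemma \ref{sec4:lem3}, and the identities $\Theta = \Upsilon_{\cc_{\Theta}}$ and $\cc = \cc_{\Upsilon_{\cc}}$ supplied by Theorem \ref{sec5:lem2} (and inherited by Theorem \ref{sec4:cor2}) already ensure that the restricted maps are mutual inverses.
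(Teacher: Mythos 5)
Your proposal is correct and matches the paper exactly: the paper derives this corollary immediately from Theorem \ref{sec4:cor2} combined with the two directions of Lemma \ref{sec4:lem3}, which is precisely the restriction argument you spell out. No gaps.
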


Finally, Theorems \ref{sec5:lem2} and \ref{sec5:lem8} imply

\begin{cor}\label{sec5:cor3} Let $(A,\Om)$ be an algebra, and
$\Theta$ be a congruence on $(\wpf A,\Om,\cup)$. There is a one-to-one
correspondence between the following sets:
\begin{itemize}
\item[(i)] $\{\Psi\in
\Con(\wpf A)\;|\;\widetilde{\Psi}=\widetilde{\Theta}\}$;
\item[(ii)] $\{\psi\in
\Con(\wpf
A/\widetilde{\Theta})\;|\;\widetilde{\psi}=\mathbf{0}_{A/\widetilde{\Theta}}\}$;
\item[(iii)] $\{\cc\in
\Clo_{\fic}(A/\widetilde{\Theta})\;|\;\cc\;\;\text{satisfies}\;\;
\eqref{sec5:eq5}\}$.
\end{itemize}
\end{cor}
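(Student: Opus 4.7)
The plan is to factor the correspondence into two bijections, (i)$\leftrightarrow$(ii) and (ii)$\leftrightarrow$(iii), and then compose them. Set $\alpha:=\widetilde{\Theta}$. The first bijection is essentially a direct packaging of Theorem \ref{sec5:lem8}: assign to $\Psi\in\Con(\wpf A)$ with $\widetilde{\Psi}=\alpha$ the relation $\delta_{\Psi}\in\Con(\wpf(A/\alpha))$ defined by \eqref{rel_delta}, and to $\psi\in\Con(\wpf(A/\alpha))$ with $\widetilde{\psi}=\mathbf{0}_{A/\alpha}$ the relation $\Delta_{\psi}\in\Con(\wpf A)$ defined by \eqref{rel_D}.

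To see that these assignments land in the advertised sets, I would verify that $\widetilde{\delta_{\Psi}}=\mathbf{0}_{A/\alpha}$ and $\widetilde{\Delta_{\psi}}=\alpha$. For the first, the equivalences $(\{a/\alpha\},\{b/\alpha\})\in\delta_{\Psi}\Leftrightarrow(\{a\},\{b\})\in\Psi\Leftrightarrow(a,b)\in\widetilde{\Psi}=\alpha\Leftrightarrow a/\alpha=b/\alpha$ suffice; the dual calculation gives the second. The identities $\Theta=\Delta_{\delta_{\Theta}}$ and $\psi=\delta_{\Delta_{\psi}}$ supplied by Theorem \ref{sec5:lem8} then show that the two assignments are mutually inverse.

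For (ii)$\leftrightarrow$(iii), I would apply Corollary \ref{sec5:cor2} to the quotient algebra $(A/\alpha,\Om)$, obtaining a one-to-one correspondence between those congruences $\psi$ on $(\wpf(A/\alpha),\Om,\cup)$ with $\widetilde{\psi}=\mathbf{0}_{A/\alpha}$ and the closure operators $\cc\in\Clo_{\fic}(A/\alpha)$ that satisfy \eqref{sec5:eq5}. Composing with the first bijection yields the desired direct correspondence between (i) and (iii).

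The main obstacle is pure bookkeeping: one has to check that $\delta_{\Psi}$ and $\Delta_{\psi}$ are well defined (independent of the chosen representatives of the classes under $\alpha$) and that full invariance is transported correctly between $\wpf A$ and $\wpf(A/\alpha)$, since Corollary \ref{sec5:cor2} is phrased in terms of $\Con_{\fic}$ and $\Clo_{\fic}$. The key observation here is that any endomorphism $\varphi$ of $(\wpf(A/\alpha),\Om,\cup)$ is intertwined, via the projection $S\mapsto S^{\alpha}$, with an endomorphism of $(\wpf A,\Om,\cup)$, so fully invariant congruences correspond under $\delta$ and $\Delta$. Once these routine verifications are in place, the corollary follows by composing the two bijections.
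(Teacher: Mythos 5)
Your two-step factorization --- (i)$\leftrightarrow$(ii) via the maps $\delta$ and $\Delta$ of Theorem \ref{sec5:lem8}, then (ii)$\leftrightarrow$(iii) via the congruence/closure-operator correspondence --- is exactly the route the paper takes; its proof is nothing more than the citation of Theorems \ref{sec5:lem2} and \ref{sec5:lem8}. Your verification that $\widetilde{\delta_{\Psi}}=\mathbf{0}_{A/\alpha}$ and $\widetilde{\Delta_{\psi}}=\alpha$ (writing $\alpha$ for $\widetilde{\Theta}$ as you do) is the right bookkeeping for the first leg, and the mutual inversion of $\delta$ and $\Delta$ is indeed supplied by \eqref{sec4:eq1} and \eqref{sec4:eq2}.

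The one point to repair is the second leg. You invoke Corollary \ref{sec5:cor2}, which is a correspondence between $\{\psi\in\Con_{\fic}(\wpf(A/\alpha))\mid\widetilde{\psi}=\mathbf{0}_{A/\alpha}\}$ and $\{\cc\in\Clo_{\fic}(A/\alpha)\mid \cc \text{ satisfies } \eqref{sec5:eq5}\}$, whereas set (ii) ranges over \emph{all} congruences $\psi$ with $\widetilde{\psi}=\mathbf{0}_{A/\widetilde{\Theta}}$, with no full-invariance restriction (and neither (i) nor (ii) mentions full invariance). As written, your composite therefore bijects (iii) only with a subset of (ii). The tool the paper actually means here is Theorem \ref{sec5:lem2} --- the plain bijection $\Con(\wpf A)\leftrightarrow\Clo(A)$, applied to $A/\widetilde{\Theta}$ --- combined with Lemma \ref{sec4:lem3}, which translates the condition $\widetilde{\psi}=\mathbf{0}$ into condition \eqref{sec5:eq5}; on that reading the subscript $\fic$ in item (iii) is extraneous and should be read simply as $\Clo$. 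If instead one insists on the $\fic$ in (iii), then (i) and (ii) must be restricted to fully invariant congruences as well, and your parenthetical claim that full invariance is transported by $\delta$ and $\Delta$ then requires an actual argument: the paper establishes this only for free algebras (Lemma \ref{sec5:lem6}), not for an arbitrary $(A,\Om)$. Either way the fix is local, and the overall architecture of your proof is sound.
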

\section{Applications: the lattice of subvarieties}\label{sec5}

Let $\vv$ be a subvariety of $\typ$ and let $\mm_{\vv}$ denote the
variety of all semilattice ordered $\vv$-algebras. In \cite{PZ12a}
it was shown that the extended power algebra $(\wpf
F_{\vv}(X),\Om,\cup)$ defined on the free algebra $(F_{\vv}(X),\Om)$
over a set $X$ in the variety $\vv$ has the universality property
for all semilattice ordered $\vv$-algebras. In particular,

\begin{thm}\label{sec4:cor1}\cite{PZ12a}
The semilattice ordered algebra $(\wpf F_{\vv}(X),\Om,\cup)$ is free
over a set $X$ in the variety $\mm_{\vv}$ if and only if $(\wpf
F_{\vv}(X),\Om,\cup)\in\mm_{\vv}$.
\end{thm}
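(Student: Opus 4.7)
The forward implication is immediate from the definition of a free algebra in a variety: any free object in $\mm_{\vv}$ must by definition belong to $\mm_{\vv}$. So the content of the theorem is the reverse implication, which I would prove by verifying that $(\wpf F_{\vv}(X),\Om,\cup)$, together with the assignment $\iota\colon X\to \wpf F_{\vv}(X)$, $x\mapsto\{x\}$, satisfies the universal property of a free algebra in $\mm_{\vv}$ over $X$.

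First I would check that $\iota(X) = \{\{x\}\mid x\in X\}$ generates $(\wpf F_{\vv}(X),\Om,\cup)$ as a semilattice ordered algebra. Since $F_{\vv}(X)$ is generated by $X$ in $\vv$, an induction on term complexity gives every singleton $\{t\}$ inside the generated subalgebra, using the fact that $\om(\{t_1\},\ldots,\{t_n\}) = \{\om(t_1,\ldots,t_n)\}$ for the complex operation. An arbitrary finite nonempty subset $\{t_1,\ldots,t_k\}$ is then $\{t_1\}\cup\cdots\cup\{t_k\}$.

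For the universal property, take any $(B,\Om,+)\in\mm_{\vv}$ and any map $f\colon X\to B$. Because $(B,\Om)\in\vv$, freeness of $F_{\vv}(X)$ yields a unique $\Om$-homomorphism $\bar f\colon F_{\vv}(X)\to B$ extending $f$. Define $\hat f\colon \wpf F_{\vv}(X)\to B$ by $\hat f(\{s_1,\ldots,s_k\}):=\bar f(s_1)+\cdots+\bar f(s_k)$; this is well defined thanks to the associativity, commutativity and idempotency of $+$, and it immediately preserves $\cup$. The main computation is preservation of each $\om\in\Om$: expanding $\hat f(\om(S_1,\ldots,S_n))$ via the definition of the complex operation and the $\Om$-homomorphism property of $\bar f$, and expanding $\om(\hat f(S_1),\ldots,\hat f(S_n))$ via the distributivity of $\om$ over $+$ in $B$ (applied inductively across the $n$ arguments, using Definition \ref{sec2:def1}), both sides reduce to the same iterated $+$-sum of the elements $\om(\bar f(s_1),\ldots,\bar f(s_n))$ with $(s_1,\ldots,s_n)\in S_1\times\cdots\times S_n$. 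Uniqueness of $\hat f$ follows at once from the generation statement of the previous paragraph.

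The hypothesis $(\wpf F_{\vv}(X),\Om,\cup)\in\mm_{\vv}$ enters only to guarantee that the candidate free algebra is actually an object of $\mm_{\vv}$ in the first place; the construction and the verification of the universal property themselves use only that the target algebra $B$ lies in $\mm_{\vv}$. So there is no genuine obstacle, and the only technical point worth spelling out carefully is the repeated application of distributivity in the $\om$-preservation step.
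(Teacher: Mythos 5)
Your argument is correct and is essentially the intended one: the paper states this result without proof, citing \cite{PZ12a}, and the surrounding discussion makes clear that the proof there is exactly your decomposition --- the universality property of $(\wpf F_{\vv}(X),\Om,\cup)$ with respect to all of $\mm_{\vv}$ (via $x\mapsto\{x\}$, the sum-extension $\hat f$, and distributivity) holds unconditionally, so the only issue is whether the algebra itself lies in $\mm_{\vv}$. No gaps.
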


\begin{cor}\label{sec4:thm2}\cite{PZ12a}
Let $(F_{\typ}(X),\Om)$ be the free algebra over a set $X$ in the variety $\typ$.
The extended power algebra $(\wpf F_{\typ}(X),\Om,\cup)$ is free
over $X$ in the variety $\mm_{\typ}$ of all semilattice
ordered $\typ$-algebras.
\end{cor}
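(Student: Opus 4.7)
The plan is to apply Theorem \ref{sec4:cor1} directly, with $\vv$ instantiated to the full variety $\typ$. In that theorem the freeness of $(\wpf F_{\vv}(X),\Om,\cup)$ in $\mm_{\vv}$ is shown to be equivalent to the single membership condition $(\wpf F_{\vv}(X),\Om,\cup)\in\mm_{\vv}$, so the whole task reduces to verifying this membership when $\vv=\typ$.

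For the verification I would invoke Example \ref{sec3:exm3}: for any algebra $(A,\Om)\in\typ$, the extended power algebra $(\p_{>0}A,\Om,\cup)$ is a semilattice ordered $\typ$-algebra, and $(\wpf A,\Om,\cup)$ is a subalgebra of it, hence itself a semilattice ordered $\typ$-algebra. Taking $A:=F_{\typ}(X)$, which is certainly an object of $\typ$ (since $\typ$ is the variety of \emph{all} algebras of the fixed type $\tau$, membership is automatic), we conclude that $(\wpf F_{\typ}(X),\Om,\cup)\in \mm_{\typ}$.

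Combining these two observations, Theorem \ref{sec4:cor1} yields immediately that $(\wpf F_{\typ}(X),\Om,\cup)$ is free over $X$ in $\mm_{\typ}$. There is no real obstacle here: the corollary is a direct specialization of the preceding theorem, whose nontrivial content is that when $\vv$ is a proper subvariety of $\typ$ the complex operations need not preserve the equations defining $\vv$, so membership of the power algebra in $\mm_{\vv}$ becomes the only issue. For $\vv=\typ$ this last obstruction disappears, and the proof is a one-line application.
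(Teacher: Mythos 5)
Your proposal is correct and matches the route the paper itself takes: Corollary \ref{sec4:thm2} is presented precisely as the specialization of Theorem \ref{sec4:cor1} to $\vv=\typ$, where the membership condition $(\wpf F_{\typ}(X),\Om,\cup)\in\mm_{\typ}$ holds automatically by Example \ref{sec3:exm3} (the J\'onsson--Tarski distributivity of complex operations over $\cup$), since $\typ$ imposes no identities beyond the type. Nothing is missing.
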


Note that, by results of G. Gr\"atzer and H. Lakser \cite{GL88}, the
same holds also for any \emph{linear variety} - a variety defined by
identities of the form $t\=u$ where every variable occurs at each
side at most once, i.e. by \emph{linear} identities.

\begin{thm}\label{sec4:thm3}
Let $\vv$ be a linear variety and let $(F_{\vv}(X),\Om)$ be the free
algebra over a set $X$ in $\vv$. The extended power algebra $(\wpf
F_{\vv}(X),\Om,\cup)$ is free over $X$ in the variety
$\mm_{\vv}$ of all semilattice ordered $\vv$-algebras.
\end{thm}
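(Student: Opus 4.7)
The plan is to reduce the statement to Theorem \ref{sec4:cor1}, which says that $(\wpf F_{\vv}(X),\Om,\cup)$ is free over $X$ in $\mm_{\vv}$ as soon as it actually lies in $\mm_{\vv}$. So the entire proof boils down to verifying that $(\wpf F_{\vv}(X),\Om,\cup)$ is a semilattice ordered $\vv$-algebra.

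Two of the three defining conditions of a semilattice ordered $\vv$-algebra are free of charge. First, $(\wpf F_{\vv}(X),\cup)$ is a join semilattice by definition. Second, the complex $\Om$-operations on $\wpf F_{\vv}(X)$ distribute over the set-theoretic union $\cup$; this is the J\'onsson--Tarski observation already recalled in Example \ref{sec3:exm3}. So the whole content of the theorem is the remaining requirement that $(\wpf F_{\vv}(X),\Om)$ belongs to $\vv$.

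This is exactly where the linearity hypothesis enters, and it is the only non-routine step. I would invoke the theorem of Gr\"atzer and Lakser \cite{GL88} cited in the paragraph preceding the statement: for a variety $\vv$ defined by linear identities, and for any $(A,\Om)\in\vv$, the power algebra $(\wpf A,\Om)$ again belongs to $\vv$. The underlying reason is that a linear identity $t(x_1,\ldots,x_n)\approx u(x_1,\ldots,x_n)$ evaluated on subsets $A_1,\ldots,A_n$ yields on each side the set
\begin{equation*}
\{t(a_1,\ldots,a_n)\mid a_i\in A_i\}=\{u(a_1,\ldots,a_n)\mid a_i\in A_i\},
\end{equation*}
precisely because each variable appears at most once on each side, so no repeated choices of elements from the same $A_i$ are forced; equality then follows term-by-term from the corresponding identity in $(A,\Om)$. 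Applying this with $A=F_{\vv}(X)$ gives $(\wpf F_{\vv}(X),\Om)\in\vv$.

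Combining the three verifications shows $(\wpf F_{\vv}(X),\Om,\cup)\in\mm_{\vv}$, and Theorem \ref{sec4:cor1} then yields freeness over $X$. The only genuinely delicate point is the preservation of linear identities under the power construction, but since this is precisely the Gr\"atzer--Lakser result invoked just before the statement, I would simply cite it rather than reprove it.
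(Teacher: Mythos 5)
Your proposal is correct and follows essentially the same route as the paper: the paper derives this theorem from Theorem \ref{sec4:cor1} together with the Gr\"atzer--Lakser result that the power construction preserves linear identities, exactly as you do. Your extra remark explaining why a linear identity evaluates term-by-term on complex operations is a sound (and welcome) elaboration of the cited fact, not a deviation.
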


For a variety $\vv$ let $\vv^{*}$ be its \emph{linearization}, the
variety defined by all linear identities satisfied in $\vv$.
Obviously, $\vv^{*}$ contains $\vv$ as a subvariety.

G. Gr\"atzer and H. Lakser proved in \cite[Prop. 1]{GL88} that for any subvariety
$\vv\subseteq\mho$ the algebra $(\wpf F_{\vv}(X),\Om)$ satisfies only those identities
resulting through identification of variables from the linear identities true in $\vv$. This
implies that for each subvariety $\vv\subseteq\mho$, the algebra $(\wpf F_{\vv}(X),\Om)$ belongs to $\vv^*$,
but it does not belong to any its proper subvariety. Hence, by Corollary \ref{sec4:cor1} and Theorem \ref{sec4:thm3} we obtain

\begin{thm}\label{sec4:thm4}
The semilattice ordered algebra
$(\wpf F_{\vv}(X),\Om,\cup)$ is free over a set $X$ in the variety
$\mm_{\vv}$ if and only if $\vv=\vv^*$.
\end{thm}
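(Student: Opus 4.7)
The plan is to derive both directions directly from Corollary \ref{sec4:cor1}, Theorem \ref{sec4:thm3}, and the Gr\"atzer--Lakser observation recalled just before the theorem statement; no new machinery is needed.

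For the ``if'' direction, I would assume $\vv=\vv^*$. Because $\vv^*$ is by definition presented by all linear identities true in $\vv$, this hypothesis means $\vv$ is itself axiomatized by linear identities, i.e.\ $\vv$ is a linear variety. Theorem \ref{sec4:thm3} then applies verbatim and yields that $(\wpf F_{\vv}(X),\Om,\cup)$ is free over $X$ in $\mm_{\vv}$.

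For the ``only if'' direction, I would assume that $(\wpf F_{\vv}(X),\Om,\cup)$ is $\mm_{\vv}$-free over $X$. This forces in particular $(\wpf F_{\vv}(X),\Om,\cup)\in\mm_{\vv}$ (the free algebra must lie in the variety), and by the very definition of $\mm_{\vv}$ this means the $\Om$-reduct $(\wpf F_{\vv}(X),\Om)$ belongs to $\vv$. Note that $\vv\subseteq\vv^*$ is automatic, since every linear identity holding in $\vv$ obviously still holds in $\vv$. If the inclusion were strict, $\vv$ would be a proper subvariety of $\vv^*$ nonetheless containing $(\wpf F_{\vv}(X),\Om)$, contradicting the Gr\"atzer--Lakser statement recalled just above, namely that $(\wpf F_{\vv}(X),\Om)$ lies in $\vv^*$ but in no proper subvariety of $\vv^*$. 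Hence $\vv=\vv^*$.

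The main ``obstacle'' is really only conceptual bookkeeping: the content of ``free in $\mm_{\vv}$'' has already been compressed by Corollary \ref{sec4:cor1} into mere membership in $\mm_{\vv}$, and membership in $\mm_{\vv}$ amounts to the $\Om$-reduct lying in $\vv$. Once this is recognised, the characterisation of which subvarieties contain the power-algebra reduct is exactly what the cited Gr\"atzer--Lakser result supplies, and the equivalence drops out.
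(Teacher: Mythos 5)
Your proposal is correct and follows essentially the same route as the paper: the paper derives the theorem by combining Theorem \ref{sec4:cor1} (freeness is equivalent to membership in $\mm_{\vv}$), Theorem \ref{sec4:thm3} (linear varieties give freeness), and the Gr\"atzer--Lakser fact that $(\wpf F_{\vv}(X),\Om)$ lies in $\vv^*$ but in no proper subvariety of it. Your write-up just makes explicit the two implications the paper leaves to the reader.
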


\begin{cor}\label{sec4:cor6} A variety $\vv$ is linear if and only
if $(\wpf F_{\vv}(X),\Om)\in\vv$.
\end{cor}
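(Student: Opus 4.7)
The plan is to bootstrap directly off the three preceding results (Corollary \ref{sec4:cor1}, Theorem \ref{sec4:thm3}, Theorem \ref{sec4:thm4}), together with the basic observation recorded in Example \ref{sec3:exm3}: for any algebra $(A,\Om)$, the complex operations on $\wpf A$ automatically distribute over $\cup$, so $(\wpf A,\Om,\cup)$ is a semilattice ordered $\typ$-algebra as soon as $(\wpf A,\Om)\in\typ$. Applied to $A=F_{\vv}(X)$, this yields the pivotal equivalence
\begin{equation*}
(\wpf F_{\vv}(X),\Om,\cup)\in\mm_{\vv}\;\Longleftrightarrow\;(\wpf F_{\vv}(X),\Om)\in\vv,
\end{equation*}
which is what lets us remove the semilattice structure from the conclusion.

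For the forward direction, I would assume $\vv$ is linear. Theorem \ref{sec4:thm3} then says $(\wpf F_{\vv}(X),\Om,\cup)$ is free over $X$ in $\mm_{\vv}$, so in particular it belongs to $\mm_{\vv}$; by the equivalence above, $(\wpf F_{\vv}(X),\Om)\in\vv$. For the converse, assume $(\wpf F_{\vv}(X),\Om)\in\vv$. Then $(\wpf F_{\vv}(X),\Om,\cup)\in\mm_{\vv}$, so by Corollary \ref{sec4:cor1} this algebra is free over $X$ in $\mm_{\vv}$, and Theorem \ref{sec4:thm4} forces $\vv=\vv^*$, i.e.\ $\vv$ is linear.

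There is no genuine obstacle here: the corollary is a clean repackaging of Theorems \ref{sec4:thm3} and \ref{sec4:thm4}. The only point requiring a line of care is the reduction from $\mm_{\vv}$-membership to $\vv$-membership of the plain reduct, which is immediate from the Jónsson--Tarski distributivity used throughout Example \ref{sec3:exm3}.
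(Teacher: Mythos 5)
Your proof is correct and matches the paper's (implicit) derivation: the paper states this as an unproved corollary of Theorem \ref{sec4:thm4} via Corollary \ref{sec4:cor1} and the J\'onsson--Tarski distributivity, which is exactly the chain you spell out. The one point you rightly isolate --- that $(\wpf F_{\vv}(X),\Om,\cup)\in\mm_{\vv}$ iff $(\wpf F_{\vv}(X),\Om)\in\vv$ --- is the only content beyond the preceding theorems, and you justify it correctly.
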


Theorem \ref{sec4:thm4}, Corollary \ref{sec4:cor6}, and the following theorem give a new
characterization of the linear varieties - the semantic one instead
of well-established syntactic one.

\begin{thm}\label{varlin}
A variety $\vv$ is linear if and only if it is a variety closed
under power construction $A\mapsto \wpf A$.
\end{thm}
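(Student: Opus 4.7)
The strategy is to reduce both directions of the equivalence to Corollary \ref{sec4:cor6}, which already handles the special case of free algebras, and then to promote that to arbitrary members of $\vv$ via the fact (recalled just before this section) that every homomorphism $f\colon A\to B$ of $\Om$-algebras lifts to a homomorphism $f^{+}\colon (\wpf A,\Om,\cup)\to (\wpf B,\Om,\cup)$ given by $f^{+}(S)=\{f(s)\mid s\in S\}$.

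The implication ``$\vv$ closed under the power construction $\Rightarrow$ $\vv$ is linear'' is immediate: since $F_{\vv}(X)\in \vv$, the hypothesis gives $(\wpf F_{\vv}(X),\Om)\in \vv$, and Corollary \ref{sec4:cor6} then forces $\vv$ to be linear.

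For the converse, assume $\vv$ is linear and fix an arbitrary $A\in \vv$. Choose a set $X$ together with a surjective homomorphism $f\colon F_{\vv}(X)\to A$ (for instance, $X=A$ with $f$ the canonical evaluation map). The lifted map $f^{+}\colon \wpf F_{\vv}(X)\to \wpf A$ is then a homomorphism of $\Om$-algebras, and it is surjective because every non-empty $S\subseteq A$ equals $f^{+}(f^{-1}(S))$. Hence $(\wpf A,\Om)$ is a homomorphic image of $(\wpf F_{\vv}(X),\Om)$, and the latter lies in $\vv$ by Corollary \ref{sec4:cor6}. Because varieties are closed under homomorphic images, $(\wpf A,\Om)\in \vv$, which is exactly the desired closure under $A\mapsto \wpf A$.

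No step looks genuinely obstructive: all the substantive work has already been absorbed into the Gr\"atzer--Lakser description underlying Corollary \ref{sec4:cor6}, and the functoriality of the power construction on homomorphisms was recorded earlier in the excerpt. The only minor point to verify is that surjectivity of $f$ implies surjectivity of $f^{+}$ on non-empty subsets, which is transparent from the definition of $f^{+}$.
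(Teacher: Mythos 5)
The paper states Theorem \ref{varlin} without a separate proof, presenting it as a consequence of the Gr\"atzer--Lakser result and Corollary \ref{sec4:cor6}; your argument is the natural way to fill in that deduction, and it is essentially correct. One detail needs repair: to show that $f^{+}$ maps $\wpf F_{\vv}(X)$ \emph{onto} $\wpf A$ you cannot take $f^{-1}(S)$ as a preimage of $S$, because $\wpf A$ consists of \emph{finite} non-empty subsets while $f^{-1}(S)$ is in general infinite, hence not an element of $\wpf F_{\vv}(X)$ at all. Instead choose, for each $s\in S$, one element $t_s\in f^{-1}(s)$; then $T=\{t_s\mid s\in S\}\in\wpf F_{\vv}(X)$ and $f^{+}(T)=S$. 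With that correction the argument goes through: the implication from closure under the power construction to linearity is immediate from Corollary \ref{sec4:cor6} applied to $F_{\vv}(X)\in\vv$, and the converse follows because $(\wpf A,\Om)$ is a homomorphic image of $(\wpf F_{\vv}(X),\Om)\in\vv$ and varieties are closed under homomorphic images. A slightly more direct route for the converse, closer to the Gr\"atzer--Lakser/Gautam viewpoint the paper invokes, is to note that linear identities are preserved verbatim by complex operations, so every defining identity of a linear $\vv$ already holds in $\wpf A$ for each $A\in\vv$; your homomorphic-image argument reaches the same conclusion using only facts recorded earlier in the paper.
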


By Corollary \ref{sec4:cor2}, for a subvariety $\vv\subseteq\typ$,
we have a one-to-one correspondence between the set $\Con_{\fic}(\wpf
F_{\vv}(X))$ of all fully invariant congruence relations on $(\wpf
F_{\vv}(X),\Om,\cup)$ and the set $\Clo_{\fic}(F_{\vv}(X))$ 
of all algebraic $\varnothing$-preserving closure operators on $F_{\vv}(X)$ which
satisfy conditions \eqref{sec5:eq1} and \eqref{sec6:eq1}.

From now we assume that $X$ is an infinite set. By Corollary
\ref{sec4:cor1} if $(\wpf F_{\vv}(X),\Om,\cup)\in \mm_{\vv}$ then
there is a one-to-one correspondence between the set of all
subvarieties of the variety $\mm_{\vv}={\rm HSP}((\wpf
F_{\vv}(X),\Om,\cup))$ and the set $\Clo_{\fic}(F_{\vv}(X))$.

In particular, by Theorems \ref{sec4:thm2} and \ref{sec4:thm3}, we
obtain

\begin{cor}
There is a one-to-one correspondence between the set of all
subvarieties of the variety $\mm_{\typ}={\rm HSP}((\wpf
F_{\typ}(X),\Om,\cup))$ and the set $\Clo_{\fic}(F_{\typ}(X))$.
\end{cor}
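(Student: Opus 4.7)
The plan is to obtain the desired bijection by composing two already-available correspondences: the classical Birkhoff-style bijection between subvarieties of a variety and fully invariant congruences on a suitable free algebra, and the bijection from Theorem \ref{sec4:cor2} between $\Con_{\fic}(\wpf F_{\typ}(X))$ and $\Clo_{\fic}(F_{\typ}(X))$.

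First I would invoke Corollary \ref{sec4:thm2}, which asserts that the extended power algebra $(\wpf F_{\typ}(X),\Om,\cup)$ is free over $X$ in the variety $\mm_{\typ}$ of all semilattice ordered $\typ$-algebras; this equality $\mm_{\typ}={\rm HSP}((\wpf F_{\typ}(X),\Om,\cup))$ is what makes the classical machinery applicable. Since $X$ is assumed to be infinite, the $\mm_{\typ}$-free algebra on $X$ has free rank at least $\aleph_0$, so Birkhoff's theorem yields the standard order-reversing bijection
\[
\ww \;\longmapsto\; \Theta_{\ww}\,,
\]
where $\ww$ ranges over subvarieties of $\mm_{\typ}$ and $\Theta_{\ww}$ is the fully invariant congruence on $(\wpf F_{\typ}(X),\Om,\cup)$ consisting of those pairs $(Q,R)$ such that $Q=R$ holds identically in $\ww$. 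The inverse map sends a fully invariant congruence $\Theta$ to the subvariety defined by all identities in the quotient.

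Second, I would apply Theorem \ref{sec4:cor2} with $(A,\Om)=(F_{\typ}(X),\Om)$ to obtain the one-to-one correspondence
\[
\Con_{\fic}(\wpf F_{\typ}(X)) \;\longleftrightarrow\; \Clo_{\fic}(F_{\typ}(X))
\]
given explicitly by $\Theta\mapsto \cc_{\Theta}$ and $\cc\mapsto \Upsilon_{\cc}$. Composing the two bijections gives the claimed one-to-one correspondence between subvarieties of $\mm_{\typ}$ and elements of $\Clo_{\fic}(F_{\typ}(X))$.

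Since both constituent bijections are already established in the paper, there is essentially no obstacle — the only point to check is that the Birkhoff bijection genuinely applies, which requires the freeness of $(\wpf F_{\typ}(X),\Om,\cup)$ in $\mm_{\typ}$ over an infinite set $X$. Both of these hypotheses are in force (the first by Corollary \ref{sec4:thm2}, the second by the standing assumption introduced just before the statement), so the proof reduces to citing Corollary \ref{sec4:thm2}, the Birkhoff correspondence, and Theorem \ref{sec4:cor2}, and composing.
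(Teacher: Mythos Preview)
Your proposal is correct and follows essentially the same route as the paper: the paper obtains the corollary by combining Theorem~\ref{sec4:cor2} (the bijection $\Con_{\fic}(\wpf A)\leftrightarrow\Clo_{\fic}(A)$ specialized to $A=F_{\typ}(X)$) with the freeness statement of Corollary~\ref{sec4:thm2} and the standing assumption that $X$ is infinite, implicitly invoking the Birkhoff correspondence between subvarieties and fully invariant congruences on a free algebra of infinite rank. You have simply made that implicit Birkhoff step explicit.
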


\begin{cor}
Let $\vv$ be a linear variety. There is a one-to-one correspondence
between the set of all subvarieties of the variety $\mm_{\vv}={\rm
HSP}((\wpf F_{\vv}(X),\Om,\cup))$ and the set
$\Clo_{\fic}(F_{\vv}(X))$.
\end{cor}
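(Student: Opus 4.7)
The plan is to chain two bijections that have already been established, so the proof of the final corollary is essentially a composition argument. The underlying structure is: subvarieties of $\mm_{\vv}$ $\leftrightarrow$ fully invariant congruences on the free algebra of $\mm_{\vv}$ on an infinite generating set $\leftrightarrow$ elements of $\Con_{\fic}(\wpf F_{\vv}(X))$ $\leftrightarrow$ elements of $\Clo_{\fic}(F_{\vv}(X))$.

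First, I would invoke Theorem \ref{sec4:thm3}: since $\vv$ is a linear variety, the extended power algebra $(\wpf F_{\vv}(X),\Om,\cup)$ is the free algebra in $\mm_{\vv}$ over $X$. Because $X$ is assumed infinite, this free algebra is generic enough that the classical Birkhoff correspondence applies: subvarieties of $\mm_{\vv}$ are in bijection with fully invariant congruences on it, i.e.\ with the set $\Con_{\fic}(\wpf F_{\vv}(X))$. Explicitly, to a subvariety $\ww \subseteq \mm_{\vv}$ one assigns the fully invariant congruence $\Theta_{\ww}$ whose quotient realises the $\ww$-free algebra on $X$; in the other direction, a fully invariant congruence $\Theta$ is sent to the subvariety ${\rm HSP}((\wpf F_{\vv}(X),\Om,\cup)/\Theta)$.

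Second, I would apply Theorem \ref{sec4:cor2} with $(A,\Om) = (F_{\vv}(X),\Om)$, which already states a one-to-one correspondence between $\Con_{\fic}(\wpf F_{\vv}(X))$ and $\Clo_{\fic}(F_{\vv}(X))$ via the mutually inverse assignments $\Theta \mapsto \cc_{\Theta}$ and $\cc \mapsto \Upsilon_{\cc}$. Composing the two bijections produces the desired correspondence: a subvariety $\ww$ of $\mm_{\vv}$ is matched with the closure operator $\cc_{\Theta_{\ww}} \in \Clo_{\fic}(F_{\vv}(X))$, and conversely a closure operator $\cc$ is matched with the subvariety ${\rm HSP}((\wpf F_{\vv}(X),\Om,\cup)/\Upsilon_{\cc})$.

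There is no genuine obstacle here; everything has been prepared in the earlier sections. The only points worth flagging in the write-up are (i) the necessity of $X$ being infinite, which is needed so that Birkhoff's correspondence between subvarieties and fully invariant congruences on the free algebra holds without loss of identities, and (ii) that the linearity of $\vv$ is used precisely to ensure, via Theorem \ref{sec4:thm3} (equivalently Corollary \ref{sec4:cor6} in view of Theorem \ref{sec4:cor1}), that $(\wpf F_{\vv}(X),\Om,\cup)$ is the $\mm_{\vv}$-free algebra rather than merely a semilattice ordered algebra in some larger variety. Once both ingredients are in place, the corollary follows immediately.
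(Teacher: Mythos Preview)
Your proposal is correct and follows essentially the same approach as the paper: the paper derives the corollary directly from Theorem~\ref{sec4:thm3} (linearity makes $(\wpf F_{\vv}(X),\Om,\cup)$ free in $\mm_{\vv}$), the Birkhoff correspondence between subvarieties and fully invariant congruences on the free algebra over an infinite $X$, and Theorem~\ref{sec4:cor2} (the bijection $\Con_{\fic}(\wpf F_{\vv}(X)) \leftrightarrow \Clo_{\fic}(F_{\vv}(X))$). Your write-up is in fact more explicit about the composition and the role of the infinitude of $X$ than the paper's brief indication.
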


Recall that for any $\cc_1,\cc_2\in \Clo_{\fic}(F_{\vv}(X))$
\begin{equation*}
\cc_1\leq\cc_2\;\Leftrightarrow\;\Upsilon_ {\cc_2}\subseteq\Upsilon_
{\cc_1}.
\end{equation*}

\begin{thm}\label{sec6:thm1} For any subvariety $\vv$ of $\typ$,
the lattice of all subvarieties of the variety ${\rm HSP}((\wpf
F_{\vv}(X),\Om,\cup))$ is isomorphic to the lattice
$(\Clo_{\fic}(F_{\vv}(X)),\leq)$.
\end{thm}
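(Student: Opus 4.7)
The plan is to combine Birkhoff's subvariety theorem with the bijection $\Con_\fic(\wpf F_\vv(X)) \cong \Clo_\fic(F_\vv(X))$ from Theorem \ref{sec4:cor2}. Write $\mathcal{W} := {\rm HSP}((\wpf F_\vv(X),\Om,\cup))$. The crux is to verify that $(\wpf F_\vv(X),\Om,\cup)$ is itself free in $\mathcal{W}$ over the (infinite) set of singletons $\{\{x\} : x \in X\}$; once this is shown, Birkhoff's theorem yields an antitone bijection between subvarieties of $\mathcal{W}$ and $\Con_\fic(\wpf F_\vv(X))$, and the rest is formal.

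The main obstacle is exactly this freeness: the universality property of $(\wpf F_\vv(X),\Om,\cup)$ recorded in \cite{PZ12a} applies only to targets in $\mm_\vv$, and whenever $\vv$ is not linear $\mathcal{W}$ is strictly larger than $\mm_\vv$ (by Corollary \ref{sec4:cor6}). To handle this I would show directly that the canonical surjection $\phi : F_\mathcal{W}(X) \twoheadrightarrow (\wpf F_\vv(X),\Om,\cup)$ extending $x \mapsto \{x\}$ is injective. Given $(\Om,\cup)$-terms $u, v$ with $\phi(u) = \phi(v)$, first bring each into a \emph{union normal form} $u = \sigma_1 \cup \cdots \cup \sigma_m$, $v = \tau_1 \cup \cdots \cup \tau_r$ with every $\sigma_j, \tau_k$ a pure $\Om$-term, by iteratively pushing $\cup$ outward using distributivity of $\Om$ over $\cup$ in any semilattice-ordered algebra. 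Since complex operations in $\Om$ preserve singletons, evaluation gives $\phi(u) = \{\sigma_j(x_1,\ldots,x_n) : j = 1,\ldots,m\}$ and $\phi(v) = \{\tau_k(x_1,\ldots,x_n) : k = 1,\ldots,r\}$ as subsets of $F_\vv(X)$. Equality as sets means that every $\sigma_j(x_1,\ldots,x_n)$ coincides in $F_\vv(X)$ with some $\tau_k(x_1,\ldots,x_n)$ and vice versa. Since $X$ is infinite and $F_\vv(X)$ is $\vv$-free on $X$, these equalities are genuine identities of $\vv$, which therefore persist after substituting arbitrary elements of $F_\vv(X)$. Expanding $u(A_1,\ldots,A_n)$ and $v(A_1,\ldots,A_n)$ for arbitrary $A_i \in \wpf F_\vv(X)$ via the same normal forms, and using these pointwise identities, then shows $u(A_1,\ldots,A_n) = v(A_1,\ldots,A_n)$; so $u \approx v$ holds in $(\wpf F_\vv(X),\Om,\cup)$, hence in $\mathcal{W}$, i.e., $u = v$ already in $F_\mathcal{W}(X)$.

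With freeness established, Birkhoff's theorem produces an antitone bijection $\mathcal{U} \mapsto \Theta_\mathcal{U}$ from subvarieties of $\mathcal{W}$ onto $\Con_\fic(\wpf F_\vv(X))$, larger subvarieties corresponding to smaller fully invariant congruences. Theorem \ref{sec4:cor2} gives the further bijection $\Theta \mapsto \cc_\Theta$, and this map is antitone from $(\Con_\fic(\wpf F_\vv(X)),\subseteq)$ to $(\Clo_\fic(F_\vv(X)),\leq)$, because the defining equivalence $\cc_1 \leq \cc_2 \Leftrightarrow \Upsilon_{\cc_2} \subseteq \Upsilon_{\cc_1}$ combined with $\Upsilon_{\cc_\Theta} = \Theta$ gives $\cc_{\Theta_1} \leq \cc_{\Theta_2} \Leftrightarrow \Theta_2 \subseteq \Theta_1$. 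Composing the two antitone bijections yields the required lattice isomorphism $\mathcal{U} \mapsto \cc_{\Theta_\mathcal{U}}$.
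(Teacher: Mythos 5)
You have correctly isolated the step that actually needs an argument: that $(\wpf F_{\vv}(X),\Om,\cup)$ is free over the singletons in $\mathcal{W}={\rm HSP}((\wpf F_{\vv}(X),\Om,\cup))$, since only then does Birkhoff's correspondence identify the subvarieties of $\mathcal{W}$ with $\Con_{\fic}(\wpf F_{\vv}(X))$, which Theorem \ref{sec4:cor2} then converts into $\Clo_{\fic}(F_{\vv}(X))$. (The paper offers no proof of Theorem \ref{sec6:thm1}; the surrounding text only justifies the correspondence under the hypothesis $(\wpf F_{\vv}(X),\Om,\cup)\in\mm_{\vv}$, i.e.\ for $\vv=\vv^{*}$.) Unfortunately your verification of freeness breaks down, and the freeness claim itself is false for non-linear $\vv$. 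The error is in the final step: from the genuine $\vv$-identities $\sigma_j\approx\tau_{k(j)}$ you pass to $\sigma_j(A_1,\ldots,A_n)=\tau_{k(j)}(A_1,\ldots,A_n)$ for arbitrary $A_i\in\wpf F_{\vv}(X)$. This tacitly assumes $\sigma(A_1,\ldots,A_n)=\{\sigma(a_1,\ldots,a_n)\mid a_i\in A_i\}$, which holds only for linear terms; when a variable is repeated, the complex-operation value is in general a proper superset, because distinct occurrences of the same variable may be instantiated by distinct elements of $A_i$. Hence an identity of $\vv$ need not lift to the power algebra --- this is precisely the content of the Gr\"atzer--Lakser result quoted before Theorem \ref{sec4:thm4}, and the reason Theorem \ref{sec4:thm4} and Corollary \ref{sec4:cor6} are restricted to $\vv=\vv^{*}$.

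Concretely, let $\Om=\{\cdot\}$ and let $\vv$ be the variety of idempotent groupoids. For $u=x\cdot x$ and $v=x$ one has $\phi(u)=\{x\}\cdot\{x\}=\{x\cdot x\}=\{x\}=\phi(v)$, but for $A=\{x,y\}$ the complex product is $A\cdot A=\{x,\,xy,\,yx,\,y\}\neq A$ in $\wpf F_{\vv}(X)$, so $u\approx v$ fails in $(\wpf F_{\vv}(X),\cdot,\cup)$ and therefore $u\neq v$ in $F_{\mathcal{W}}(X)$ even though $\phi(u)=\phi(v)$: the canonical surjection is not injective and the power algebra is not free in $\mathcal{W}$ over the singletons. (Equivalently, the kernel of $f^{+}\colon\wpf F_{\vv^{*}}(X)\to\wpf F_{\vv}(X)$ is not fully invariant: apply the endomorphism sending $\{x\}$ to $\{x,y\}$ to the pair $(\{x\cdot x\},\{x\})$.) So your argument establishes the statement only when $\vv=\vv^{*}$, where it reduces to the two corollaries already recorded in the paper; for general $\vv$ a genuinely different argument (or a restriction of the hypothesis) is required.
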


Let $\mathcal{L}(\mm_{\vv})$ denote the set of all subvarieties of
the variety $\mm_{\vv}$. By Theorem \ref{sec4:thm3} we immediately
obtain the following result.

\begin{thm}\label{sec6:thm2}
Let $\vv\subseteq\typ$ be a variety defined by linear identities.
The lattice $(\mathcal{L}(\mm_{\vv}),\subseteq)$ 
is isomorphic to the lattice $(\Clo_{\fic}(F_{\vv}(X)),\leq)$.
\end{thm}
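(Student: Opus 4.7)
The plan is to recognize Theorem \ref{sec6:thm2} as a direct corollary of Theorem \ref{sec4:thm3} combined with Theorem \ref{sec6:thm1}. The only genuine content to check is the identification of $\mm_{\vv}$ with ${\rm HSP}((\wpf F_{\vv}(X), \Om, \cup))$ in the linear case, after which Theorem \ref{sec6:thm1} can be applied verbatim.

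First, since $\vv$ is defined by linear identities, Theorem \ref{sec4:thm3} guarantees that $(\wpf F_{\vv}(X), \Om, \cup)$ is free over $X$ in the variety $\mm_{\vv}$. Because $X$ is assumed to be infinite throughout this section, a standard universal-algebraic fact yields $\mm_{\vv} = {\rm HSP}((\wpf F_{\vv}(X), \Om, \cup))$: a variety is always generated by any of its free algebras on an infinite set of generators. Hence $\mathcal{L}(\mm_{\vv})$ is literally the lattice of subvarieties of ${\rm HSP}((\wpf F_{\vv}(X), \Om, \cup))$.

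Applying Theorem \ref{sec6:thm1} then gives the required lattice isomorphism with $(\Clo_{\fic}(F_{\vv}(X)), \leq)$. To make the isomorphism explicit, I would trace through the composite: a subvariety $\ww \subseteq \mm_{\vv}$ corresponds to its defining fully invariant congruence $\Theta_{\ww}$ on the $\mm_{\vv}$-free algebra $(\wpf F_{\vv}(X), \Om, \cup)$, and then via Theorem \ref{sec4:cor2} to the closure operator $\cc_{\Theta_{\ww}} \in \Clo_{\fic}(F_{\vv}(X))$. The translation of the order is automatic: $\ww_1 \subseteq \ww_2$ iff $\Theta_{\ww_2} \subseteq \Theta_{\ww_1}$, which by the defining equivalence $\cc_1 \leq \cc_2 \Leftrightarrow \Upsilon_{\cc_2} \subseteq \Upsilon_{\cc_1}$ is exactly $\cc_{\Theta_{\ww_1}} \leq \cc_{\Theta_{\ww_2}}$.

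There is no real obstacle here: all nontrivial work has been shouldered by Theorems \ref{sec4:thm3}, \ref{sec4:cor2}, and \ref{sec6:thm1}. The only point worth double-checking is that the order on $\Clo_{\fic}$ has been set up to match the inclusion-reversing nature of the subvariety/fully-invariant-congruence correspondence; the definition of $\leq$ given just before Theorem \ref{sec6:thm1} is precisely tuned so that the composite correspondence is order-preserving rather than order-reversing, and with that convention the result is immediate.
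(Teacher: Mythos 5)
Your proposal is correct and follows exactly the route the paper takes: the paper derives Theorem \ref{sec6:thm2} immediately from Theorem \ref{sec4:thm3} (freeness of $(\wpf F_{\vv}(X),\Om,\cup)$ in $\mm_{\vv}$ for linear $\vv$, with $X$ infinite, so that $\mm_{\vv}={\rm HSP}((\wpf F_{\vv}(X),\Om,\cup))$) together with Theorem \ref{sec6:thm1}. Your additional tracing of the explicit correspondence and the order conventions only makes explicit what the paper leaves implicit.
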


\begin{cor}\label{sec6:cor2} The lattice $(\mathcal{L}(\mm_{\typ}),\subseteq)$
of all subvarieties of the variety $\mm_{\typ}$ is isomorphic to the
lattice $(\Clo_{\fic}(F_{\typ}(X)),\leq)$.
\end{cor}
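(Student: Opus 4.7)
The plan is to obtain Corollary \ref{sec6:cor2} as an immediate specialization of Theorem \ref{sec6:thm2} to the case $\vv=\typ$. For the theorem to apply, the only hypothesis to verify is that $\typ$ is a linear variety in the sense of the paper, i.e.\ axiomatized by identities in which every variable occurs on each side at most once.

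First I would observe that $\typ$, being the variety of \emph{all} algebras of the fixed type $\tau$, is axiomatized by no non-trivial identities at all: the only identities valid throughout $\typ$ are the tautologies $t\=t$, which are trivially linear. A cleaner formulation of the same point uses Corollary \ref{sec4:cor6}: a variety $\vv$ is linear iff $(\wpf F_{\vv}(X),\Om)\in\vv$, and since the extended power algebra is an algebra of type $\tau$, it lies automatically in $\typ$. Either way, $\typ=\typ^{*}$ and $\typ$ is linear.

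Having established linearity, I would invoke Theorem \ref{sec6:thm2} with $\vv:=\typ$: the lattice of subvarieties $(\mathcal{L}(\mm_{\typ}),\subseteq)$ is then isomorphic to $(\Clo_{\fic}(F_{\typ}(X)),\leq)$, which is exactly the statement of the corollary. The isomorphism is, concretely, the composite already built in the preceding sections: a subvariety $\ww\subseteq\mm_{\typ}$ corresponds to its fully invariant congruence on the free semilattice ordered $\typ$-algebra $(\wpf F_{\typ}(X),\Om,\cup)$ (free by Corollary \ref{sec4:thm2}), which under Theorem \ref{sec4:cor2} corresponds to a unique element of $\Clo_{\fic}(F_{\typ}(X))$.

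There is no real obstacle here; the entire content is the linearity of $\typ$, which is immediate. The corollary is flagged as a corollary rather than a theorem precisely because its proof reduces, after this one observation, to a direct appeal to Theorem \ref{sec6:thm2}.
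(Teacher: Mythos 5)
Your proposal is correct and matches the paper's (implicit) argument exactly: the corollary is stated immediately after Theorem \ref{sec6:thm2} as the specialization $\vv=\typ$, with the linearity of $\typ$ being automatic since the variety of all algebras of a given type satisfies only trivial identities. Your additional remarks tracing the isomorphism through Corollary \ref{sec4:thm2} and Theorem \ref{sec4:cor2} are consistent with how the paper builds the correspondence.
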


\section{Applications: $\vv$-preserved subvarieties}\label{sec4}

In \cite{PZ12a} we showed that for any subvariety $\vv$ of $\typ$
and $\Theta\in \Con_{\fic}(\wpf F_{\vv}(X))$, the relation
$\widetilde{\Theta}$ is a fully invariant congruence on
$(F_{\vv}(X),\Om)$. This means that each fully invariant congruence
relation $\Theta$ on $(\wpf F_{\vv}(X),\Om,\cup)$ determines a
subvariety of $\vv$. In particular, each fully invariant congruence
relation $\Theta$ on $(\wpf F_{\typ}(X),\Om,\cup)$ determines a
subvariety $\typ_{\widetilde{\Theta}}$ of $\typ$:
\begin{equation*}
\typ_{\widetilde{\Theta}}:={\rm
HSP}((F_{\typ}(X)/\widetilde{\Theta},\Om)).
\end{equation*}
Moreover, by results of \cite{PZ12a}, for each subvariety
$\mm_{\vv}\subseteq\mm_{\typ}$, with $\vv\subseteq\typ$, there
exists a congruence $\Theta\in \Con_{\fic}(\wpf F_{\typ}(X))$ such that
$\mm_{\vv}=\mm_{\typ_{\widetilde{\Theta}}}$. Recall we assume $X$ is
an infinite set.

\begin{de}\cite{PZ12a} Let $\vv$ be a subvariety of $\typ$ and
$\mathcal{K}$ be a non-trivial subvariety of $\mm_{\vv}$. The variety $\mathcal{K}$
is $\vv$-\emph{preserved}, if $\mathcal{K}\nsubseteq\mm_{\mathcal{W}}$ for
any proper subvariety $\mathcal{W}$ of $\vv$.
\end{de}
The following characterization of $\vv$-preserved subvarieties of
the variety $\mm_{\vv}$ was presented in \cite{PZ12a}.
\begin{lm}\cite{PZ12a}\label{sec6:lem9}
Let $\Theta,\Psi\in \Con_{\fic}(\wpf F_{\typ}(X))$. A non-trivial
subvariety
\begin{equation*}
\mathcal{K}={\rm HSP}((\wpf
F_{\typ}(X)/\Psi,\Om,\cup))\subseteq\mm_{\typ_{\widetilde{\Theta}}}
\end{equation*}
is $\typ_{\widetilde{\Theta}}$-preserved if and only if
$\widetilde{\Psi}=\widetilde{\Theta}$.
\end{lm}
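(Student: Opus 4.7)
My plan is to work through the classical correspondence between fully invariant congruences on $F_{\typ}(X)$ and subvarieties of $\typ$. Every proper subvariety $\mathcal{W}\subsetneq \typ_{\widetilde{\Theta}}$ has the form $\mathcal{W}=\typ_{\alpha}$ for some fully invariant congruence $\alpha$ on $F_{\typ}(X)$ with $\widetilde{\Theta}\subsetneq \alpha$. Thus $\mathcal{K}$ is $\typ_{\widetilde{\Theta}}$-preserved iff for no such $\alpha$ do we have $\mathcal{K}\subseteq \mm_{\typ_{\alpha}}$. The entire argument rests on the following central claim, which I would isolate first:
\[
\mathcal{K}\subseteq\mm_{\typ_{\alpha}}\;\Longleftrightarrow\; \alpha\subseteq\widetilde{\Psi}.
\]

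The ($\Rightarrow$) direction is straightforward. From $\mathcal{K}\subseteq\mm_{\typ_{\alpha}}$ we obtain $(\wpf F_{\typ}(X)/\Psi,\Om)\in\typ_{\alpha}$. The $\Om$-subalgebra generated by $\{\{x\}/\Psi : x\in X\}$ is a homomorphic image of $F_{\typ}(X)$ with kernel exactly $\widetilde{\Psi}$, by the very definition of $\widetilde{\Psi}$. So $F_{\typ}(X)/\widetilde{\Psi}\in\typ_{\alpha}$; since an identity $t=u$ holds in $F_{\typ}(X)/\widetilde{\Psi}$ iff $(t,u)\in\widetilde{\Psi}$, and $\typ_{\alpha}$ is defined by the identities coded by $\alpha$, we get $\alpha\subseteq\widetilde{\Psi}$.

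The ($\Leftarrow$) direction is the crux. Given $(t,u)\in\alpha\subseteq\widetilde{\Psi}$ and an arbitrary $\Om$-homomorphism $f\colon F_{\typ}(X)\to \wpf F_{\typ}(X)/\Psi$ sending $x\mapsto S_x/\Psi$, I would invoke Corollary \ref{sec4:thm2}: $(\wpf F_{\typ}(X),\Om,\cup)$ is free in $\mm_{\typ}$ with free generators $\{x\}$. Thus the assignment $\{x\}\mapsto S_x$ extends to a semilattice-ordered endomorphism $\widetilde{g}$ of $(\wpf F_{\typ}(X),\Om,\cup)$. A short induction on the structure of $\Om$-terms (no $+$ appears in $t,u$) shows $\widetilde{g}(\{t\})=t^{\wpf F_{\typ}(X)}(S_{x_1},\ldots,S_{x_n})$, so $\widetilde{g}(\{t\})/\Psi=f(t)$, and analogously for $u$. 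Full invariance of $\Psi$ applied to $\{t\}\,\Psi\,\{u\}$ then yields $\widetilde{g}(\{t\})\,\Psi\,\widetilde{g}(\{u\})$, hence $f(t)=f(u)$.

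With the claim in hand the lemma follows quickly. The hypothesis $\mathcal{K}\subseteq\mm_{\typ_{\widetilde{\Theta}}}$ gives $\widetilde{\Theta}\subseteq\widetilde{\Psi}$ via the claim. If $\widetilde{\Theta}=\widetilde{\Psi}$, any proper $\typ_{\alpha}\subsetneq\typ_{\widetilde{\Theta}}$ has $\alpha\supsetneq\widetilde{\Theta}=\widetilde{\Psi}$, so the claim gives $\mathcal{K}\not\subseteq\mm_{\typ_{\alpha}}$, proving preservation. Conversely, if $\widetilde{\Theta}\subsetneq\widetilde{\Psi}$, then $\widetilde{\Psi}$ is fully invariant on $F_{\typ}(X)$ by the result from \cite{PZ12a} recalled immediately before the lemma, so $\typ_{\widetilde{\Psi}}$ is a proper subvariety of $\typ_{\widetilde{\Theta}}$ and the claim gives $\mathcal{K}\subseteq\mm_{\typ_{\widetilde{\Psi}}}$, violating preservation. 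The main obstacle is the reverse implication of the central claim, where one must transport full invariance from the extended power algebra back to identities on $F_{\typ}(X)$; this is exactly where the freeness of $(\wpf F_{\typ}(X),\Om,\cup)$ in $\mm_{\typ}$ becomes indispensable.
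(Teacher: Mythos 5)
The paper does not prove this lemma at all: it is quoted verbatim from \cite{PZ12a} and used as a black box, so there is no in-paper argument to compare yours against. Judged on its own, your proof is correct and self-contained. The reduction to the single equivalence $\mathcal{K}\subseteq\mm_{\typ_{\alpha}}\Leftrightarrow\alpha\subseteq\widetilde{\Psi}$ is the right move, and both directions are sound: the forward direction correctly identifies the $\Om$-subalgebra of $\wpf F_{\typ}(X)/\Psi$ generated by the classes of singletons as a copy of $F_{\typ}(X)/\widetilde{\Psi}$, and the reverse direction correctly combines the freeness of $(\wpf F_{\typ}(X),\Om,\cup)$ in $\mm_{\typ}$ (Corollary \ref{sec4:thm2}) with full invariance of $\Psi$ to show that every identity coded by $\alpha\subseteq\widetilde{\Psi}$ holds under arbitrary assignments into the quotient. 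The concluding bookkeeping also checks out, including the two facts you implicitly rely on: $\widetilde{\Psi}$ is fully invariant on $(F_{\typ}(X),\Om)$ (stated just before the lemma), and for infinite $X$ distinct fully invariant congruences on $F_{\typ}(X)$ yield distinct subvarieties, so $\widetilde{\Theta}\subsetneq\widetilde{\Psi}$ really does produce a \emph{proper} subvariety $\typ_{\widetilde{\Psi}}$ witnessing failure of preservation. The only cosmetic remark is that the case of the trivial subvariety $\mathcal{W}$ is subsumed by your claim with $\alpha$ the full congruence, so the non-triviality hypothesis on $\mathcal{K}$ needs no separate treatment.
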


In \cite{PZ12a} we showed that for each congruence $\Theta\in \Con_{\fic}(\wpf
F_{\typ}(X))$ there is a correspondence between the set of $\typ_{\widetilde{\Theta}}$-preserved
subvarieties of $\mm_{\typ_{\widetilde{\Theta}}}$ and the set of some fully invariant congruence relations on the algebra
$(\wpf (F_{\typ}(X)/\widetilde{\Theta}),\Om,\cup)$.

We introduce the following notation. For a fully invariant
congruence $\alpha\in \Con_{\fic}(F_{\typ}(X))$, let $\typ_{\alpha}:=
{\rm HSP}((F_{\typ}(X)/\alpha,\Om))\subseteq\typ$ and
$(F_{\typ_{\alpha}}(X),\Om)=(F_{\typ}(X)/\alpha,\Om)$ be the free
algebra in $\typ_{\alpha}$. Denote by $\Con_{\fic}^{\mathbf{0}}(\wpf
F_{\typ_{\alpha}}(X))$ the following set:
\begin{align*}
\{\psi\in \Con_{\fic}(\wpf
F_{\typ_{\alpha}}(X))\mid \widetilde{\psi}=\mathbf{0}_{F_{\typ_{\alpha}}(X)}\; {\rm and }\;
(\wpf F_{\typ_{\alpha}}(X)/{\psi},\Om)\in \typ_{\alpha}\}.
\end{align*}
Recall the relations $\delta_{\Psi}$ and $\Delta_{\psi}$ defined
by (\ref{rel_delta}) and (\ref{rel_D}), respectively. The following
technical lemma was proved in \cite{PZ12a}.

\begin{lm}\cite{PZ12a}\label{sec5:lem6} Let $\Psi\in
\Con_{\fic}(\wpf F_{\typ}(X))$. Then $\delta_{\Psi}\in
\Con_{\fic}^{\mathbf{0}}(\wpf F_{\typ_{\widetilde{\Psi}}}(X))$. Conversely, let
$\alpha\in \Con_{\fic}(F_{\typ}(X))$ and $\psi\in \Con_{\fic}^{\mathbf{0}}(\wpf
F_{\typ_\alpha}(X))$. Then, $\Delta_{\psi}\in \Con_{\fic}(\wpf
F_{\typ}(X))$, and moreover ${\rm HSP}((\wpf
F_{\typ}(X)/\Delta_{\psi},\Om))\subseteq \typ_{\alpha}$ and
$\widetilde{\Delta}_{\psi}=\alpha$.
\end{lm}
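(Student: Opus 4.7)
The plan is to verify, for each direction of the correspondence $\Psi\leftrightarrow\delta_\Psi$ and $\psi\leftrightarrow\Delta_\psi$, four things: (a) the resulting relation is a congruence (established in the discussion preceding Theorem~\ref{sec5:lem8}), (b) its $\widetilde{\;\cdot\;}$-reduct has the prescribed value, (c) it is fully invariant, and (d) its $\Om$-quotient lies in $\typ_\alpha$. The projection $\pi^+\colon\wpf F_\typ(X)\to\wpf F_{\typ_\alpha}(X)$ induced by $\pi\colon F_\typ(X)\to F_\typ(X)/\alpha$ serves as the bridge throughout, with the key external inputs being freeness of $(\wpf F_\typ(X),\Om,\cup)$ in $\mm_\typ$ over $X$ (Corollary~\ref{sec4:thm2}) and full invariance of the starting relation.

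For the forward direction, fix $\Psi\in\Con_\fic(\wpf F_\typ(X))$ and set $\alpha=\widetilde\Psi$. Condition (b) is immediate on singletons: $(\{a/\alpha\},\{b/\alpha\})\in\delta_\Psi\Leftrightarrow(\{a\},\{b\})\in\Psi\Leftrightarrow(a,b)\in\alpha\Leftrightarrow a/\alpha=b/\alpha$. For (c), given $\varphi\in\mathrm{End}(\wpf F_{\typ_\alpha}(X),\Om,\cup)$, I would choose for each $x\in X$ a lift $\tilde S_x\in\wpf F_\typ(X)$ with $\pi^+(\tilde S_x)=\varphi(\{x/\alpha\})$ and let $\tilde\varphi$ be the unique endomorphism of $\wpf F_\typ(X)$ with $\tilde\varphi(\{x\})=\tilde S_x$ (freeness). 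Both $\pi^+\circ\tilde\varphi$ and $\varphi\circ\pi^+$ are semilattice ordered $\Om$-homomorphisms into $\wpf F_{\typ_\alpha}(X)\in\mm_\typ$ agreeing on the free generators $\{x\}$, so they coincide; full invariance of $\Psi$ then transfers to $\delta_\Psi$ via this intertwining. For (d), note that $[B]_\Psi\mapsto[B^\alpha]_{\delta_\Psi}$ is an $\Om$-isomorphism $\wpf F_\typ(X)/\Psi\to\wpf F_{\typ_\alpha}(X)/\delta_\Psi$, so it suffices to check $\wpf F_\typ(X)/\Psi\in\typ_\alpha$: for any identity $t(\bar x)=u(\bar x)$ of $\typ_\alpha$ one has $(t,u)\in\alpha=\widetilde\Psi$, hence $(\{t\},\{u\})\in\Psi$, and applying the (freely existing) endomorphism $\{x_i\}\mapsto A_i$ and using full invariance of $\Psi$ yields $(t(A_1,\ldots,A_n),u(A_1,\ldots,A_n))\in\Psi$ for arbitrary tuples.

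For the converse, with $\psi\in\Con_\fic^{\mathbf{0}}(\wpf F_{\typ_\alpha}(X))$, (b) is dual: $(a,b)\in\widetilde{\Delta_\psi}\Leftrightarrow(\{a/\alpha\},\{b/\alpha\})\in\psi\Leftrightarrow a/\alpha=b/\alpha\Leftrightarrow a\,\alpha\,b$ (using $\widetilde\psi=\mathbf{0}$). For (c), given $\varphi\in\mathrm{End}(\wpf F_\typ(X),\Om,\cup)$, I would define $\bar\varphi(B^\alpha):=\varphi(B)^\alpha$; the delicate point is well-definedness, which I would handle by writing each $b\in B$ as a term $b(x_1,\ldots,x_n)$ so that $\varphi(\{b\})=b(\varphi(\{x_1\}),\ldots,\varphi(\{x_n\}))$, and then invoking full invariance of $\alpha$ on $F_\typ(X)$ to conclude that $\alpha$-equivalent terms induce identical operations after projecting by $\pi^+$. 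Then $\bar\varphi$ is a semilattice ordered $\Om$-endomorphism, and full invariance of $\psi$ gives $(\varphi(B)^\alpha,\varphi(C)^\alpha)=(\bar\varphi(B^\alpha),\bar\varphi(C^\alpha))\in\psi$ whenever $(B,C)\in\Delta_\psi$, i.e.\ preservation of $\Delta_\psi$ by $\varphi$. For (d), $[B]_{\Delta_\psi}\mapsto[B^\alpha]_\psi$ is an $\Om$-isomorphism $\wpf F_\typ(X)/\Delta_\psi\to\wpf F_{\typ_\alpha}(X)/\psi$, and the codomain lies in $\typ_\alpha$ by the standing hypothesis on $\psi$; hence $\mathrm{HSP}(\wpf F_\typ(X)/\Delta_\psi)\subseteq\typ_\alpha$.

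The main obstacle is the full invariance step in each direction, where one must produce a matching endomorphism on the other side of $\pi^+$. In the forward direction this is a \emph{lift}, cleanly handled by freeness of $(\wpf F_\typ(X),\Om,\cup)$ in $\mm_\typ$. In the converse it is a \emph{descent}: since $\wpf F_{\typ_\alpha}(X)$ need not be free in any variety containing itself ($\typ_\alpha$ need not be linear), well-definedness of the candidate $\bar\varphi$ must be verified by hand, and this is precisely where full invariance of $\alpha$ on $F_\typ(X)$ enters essentially.
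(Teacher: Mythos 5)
The paper itself gives no proof of this lemma (it is imported from \cite{PZ12a}), so your attempt can only be judged on its own merits. Your forward direction is sound: lifting an endomorphism of $\wpf F_{\typ_\alpha}(X)$ through $\pi^+$ via freeness of $(\wpf F_{\typ}(X),\Om,\cup)$ in $\mm_\typ$, and deducing $\wpf F_\typ(X)/\Psi\in\typ_\alpha$ from $(\{t\},\{u\})\in\Psi$ plus full invariance, both work. The converse direction, however, has a genuine gap at exactly the point you flag as delicate. The map $\bar\varphi(B^\alpha):=\varphi(B)^\alpha$ is \emph{not} well-defined, and your proposed repair --- that full invariance of $\alpha$ forces $\alpha$-equivalent terms to induce identical operations after projecting by $\pi^+$ --- is false. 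Take $\typ$ to be groupoids, $\alpha$ the fully invariant congruence of the semilattice variety, $b=x\cdot x$, $b'=x$, and $\varphi$ the endomorphism of $\wpf F_\typ(X)$ with $\varphi(\{x\})=\{x,y\}$. Then $\{b\}^\alpha=\{b'\}^\alpha=\{x/\alpha\}$, but $\varphi(\{b\})^\alpha=\{x/\alpha,\,(xy)/\alpha,\,y/\alpha\}\neq\{x/\alpha,y/\alpha\}=\varphi(\{b'\})^\alpha$. Full invariance of $\alpha$ only guarantees that $\alpha$-equivalent terms agree as term operations on algebras \emph{in} $\typ_\alpha$; the power algebra $\wpf F_{\typ_\alpha}(X)$ satisfies merely the linearization $\typ_\alpha^*$ (this is precisely the Gr\"atzer--Lakser phenomenon the paper invokes before Theorem \ref{sec4:thm4}), so nonlinear $\alpha$-identities break under complex operations.

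The deeper problem is that your argument for full invariance of $\Delta_\psi$ never uses the hypothesis $(\wpf F_{\typ_\alpha}(X)/\psi,\Om)\in\typ_\alpha$ from the definition of $\Con_{\fic}^{\mathbf{0}}$, and without that hypothesis the conclusion is simply false: $\psi=\mathbf{0}_{\wpf F_{\typ_\alpha}(X)}$ is fully invariant with $\widetilde\psi=\mathbf{0}$, yet $\Delta_{\mathbf{0}}=\ker\pi^+$ is not fully invariant when $\typ_\alpha$ is not linear (the same example: $(\{x\cdot x\},\{x\})\in\ker\pi^+$ but its $\varphi$-image is not). So any proof that ignores this condition cannot succeed. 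A correct argument must work modulo $\psi$: one shows that the composite $q\circ\pi^+\circ\varphi$ into $P=\wpf F_{\typ_\alpha}(X)/\psi$ kills $\Delta_\psi$, using full invariance of $\psi$ only under the endomorphisms $f^+$ induced by $f\in\mathrm{End}(F_{\typ_\alpha}(X))$ (singleton assignments, which \emph{do} descend) together with the fact that $(P,\Om)\in\typ_\alpha$ to control the nonlinear identities; this is essentially what conditions \eqref{sec4:eq4} and Lemmas \ref{sec4:lem2}, \ref{sec4:lem10} encode on the closure-operator side. Your isomorphism $\wpf F_\typ(X)/\Delta_\psi\cong\wpf F_{\typ_\alpha}(X)/\psi$ and the computation $\widetilde{\Delta_\psi}=\alpha$ are fine as stated.
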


Corollary \ref{sec5:cor3} and Lemma \ref{sec5:lem6} immediately
imply
\begin{cor}\cite{PZ12a}\label{sec5:cor1}
For each $\Theta\in \Con_{\fic}(\wpf F_{\typ}(X))$
there is a one-to-one correspondence between the set of
all $\typ_{\widetilde{\Theta}}$-preserved
subvarieties of $\mm_{\typ_{\widetilde{\Theta}}}$ and the set $\Con_{\fic}^{\mathbf{0}}(\wpf F_{\typ_{\widetilde{\Theta}}}(X))$.
\end{cor}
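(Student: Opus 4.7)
The plan is to compose two bijections that are already available in the paper, reducing the statement to a chain of lookups rather than any new construction. First, via Lemma \ref{sec6:lem9}, the $\typ_{\widetilde{\Theta}}$-preserved subvarieties of $\mm_{\typ_{\widetilde{\Theta}}}$ are in bijection with the fully invariant congruences $\Psi\in\Con_{\fic}(\wpf F_{\typ}(X))$ satisfying $\widetilde{\Psi}=\widetilde{\Theta}$; every non-trivial subvariety of $\mm_{\typ_{\widetilde{\Theta}}}$ can be written as ${\rm HSP}((\wpf F_{\typ}(X)/\Psi,\Om,\cup))$ since $(\wpf F_{\typ}(X),\Om,\cup)$ is free in $\mm_{\typ}$ (Corollary \ref{sec4:thm2}), and the equality $\widetilde{\Psi}=\widetilde{\Theta}$ is exactly the $\typ_{\widetilde{\Theta}}$-preservation criterion.

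Second, Corollary \ref{sec5:cor3} (applied with the algebra $F_{\typ}(X)$ and the given congruence $\Theta$, using the identification $F_{\typ_{\widetilde{\Theta}}}(X)=F_{\typ}(X)/\widetilde{\Theta}$) supplies a bijection
\[
\{\Psi\in\Con(\wpf F_{\typ}(X))\mid\widetilde{\Psi}=\widetilde{\Theta}\}\;\longleftrightarrow\;\{\psi\in\Con(\wpf F_{\typ_{\widetilde{\Theta}}}(X))\mid\widetilde{\psi}=\mathbf{0}_{F_{\typ_{\widetilde{\Theta}}}(X)}\},
\]
realized by the mutually inverse maps $\Psi\mapsto\delta_{\Psi}$ and $\psi\mapsto\Delta_{\psi}$ from Theorem \ref{sec5:lem8}.

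Third, I would restrict this bijection to the fully invariant sides. This is precisely what Lemma \ref{sec5:lem6} delivers: $\Psi\in\Con_{\fic}(\wpf F_{\typ}(X))$ implies $\delta_{\Psi}\in\Con_{\fic}^{\mathbf{0}}(\wpf F_{\typ_{\widetilde{\Theta}}}(X))$, and conversely any $\psi\in\Con_{\fic}^{\mathbf{0}}(\wpf F_{\typ_{\widetilde{\Theta}}}(X))$ produces $\Delta_{\psi}\in\Con_{\fic}(\wpf F_{\typ}(X))$ with $\widetilde{\Delta}_{\psi}=\widetilde{\Theta}$. Composing with the bijection from the first step yields the stated correspondence.

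I do not anticipate any real obstacle; the one point demanding a moment's care is verifying that the clause ``$(\wpf F_{\typ_{\widetilde{\Theta}}}(X)/\psi,\Om)\in\typ_{\widetilde{\Theta}}$'' built into the definition of $\Con_{\fic}^{\mathbf{0}}$ matches exactly the requirement that the resulting subvariety ${\rm HSP}((\wpf F_{\typ}(X)/\Delta_{\psi},\Om,\cup))$ actually sit inside $\mm_{\typ_{\widetilde{\Theta}}}$ (rather than merely inside $\mm_{\typ}$); this too is contained in Lemma \ref{sec5:lem6}, so the proof reduces to invoking the two cited results in sequence.
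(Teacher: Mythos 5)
Your proposal is correct and follows essentially the same route as the paper, which derives this corollary directly from Corollary \ref{sec5:cor3} and Lemma \ref{sec5:lem6} (with Lemma \ref{sec6:lem9} supplying, as you make explicit, the identification of the preserved subvarieties with the congruences $\Psi$ satisfying $\widetilde{\Psi}=\widetilde{\Theta}$). Your added care about the clause $(\wpf F_{\typ_{\widetilde{\Theta}}}(X)/\psi,\Om)\in\typ_{\widetilde{\Theta}}$ is exactly the point Lemma \ref{sec5:lem6} is designed to cover, so nothing is missing.
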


Now we will show that there is a similar correspondence between the
set of all $\typ_{\widetilde{\Theta}}$-preserved subvarieties of
$\mm_{\typ_{\widetilde{\Theta}}}$ and the set of all algebraic $\varnothing$-preserving
closure operators on $F_{\typ_{\widetilde{\Theta}}}(X)$ which
satisfy the conditions \eqref{sec5:eq1}, \eqref{sec6:eq1},
\eqref{sec5:eq5} and additional one.

Recall that for a set $Q\subseteq A$ and an equivalence relation $\alpha\subseteq A\times A$,
$Q^{\alpha}$ denotes the set $\{q/\alpha\mid q\in Q\}$.

\begin{lm}\label{sec4:lem2}
Let $\alpha\in \Con_{\fic}(F_{\typ}(X))$. Let $\cc$ be a closure
operator on $F_{\typ_{\alpha}}(X)$ which satisfies
\eqref{sec5:eq1} 
and the following condition: for $\{s\},Q,P_1,\ldots,P_n\in \wpf
F_{\typ}(X)$
\begin{align}\label{sec4:eq4}
&s/\alpha\in \cc(Q^\alpha)\;\;\Rightarrow\\
&s(P_1^{\alpha},\ldots,P_n^{\alpha})\subseteq
\cc(\{q(P_1^{\alpha},\ldots,P_n^{\alpha})\mid q\in Q\}).\nonumber
\end{align}
Moreover, let $\Upsilon_{\cc}\subseteq\wpf
F_{\typ_\alpha}(X)\times\wpf F_{\typ_\alpha}(X)$ be the congruence
associated with $\cc$. Then, $(\wpf
F_{\typ_\alpha}(X)/\Upsilon_{\cc},\Om)\in \typ_{\alpha}$.
\end{lm}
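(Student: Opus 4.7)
The aim is to show that the quotient algebra $(\wpf F_{\typ_\alpha}(X)/\Upsilon_\cc,\Om)$ lies in the variety $\typ_\alpha$. By Lemma \ref{sec5:thm1}, condition \eqref{sec5:eq1} already guarantees that $\Upsilon_\cc$ is a congruence on $(\wpf F_{\typ_\alpha}(X),\Om,\cup)$, so the quotient is a well-defined $\Om$-algebra, and it suffices to verify that every identity of $\typ_\alpha$ holds there.

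Because $X$ is infinite and $F_{\typ_\alpha}(X)=F_{\typ}(X)/\alpha$ is the free $\typ_\alpha$-algebra on $X$, an $\Om$-identity $s(x_1,\ldots,x_n)\approx t(x_1,\ldots,x_n)$ belongs to the equational theory of $\typ_\alpha$ precisely when $s/\alpha=t/\alpha$ in $F_{\typ_\alpha}(X)$. Every element of $\wpf F_{\typ_\alpha}(X)$ can be represented as $P^\alpha$ for some $P\in \wpf F_{\typ}(X)$. Hence the identity $s\approx t$ holds in $(\wpf F_{\typ_\alpha}(X)/\Upsilon_\cc,\Om)$ exactly when, for all choices of representatives $P_1,\ldots,P_n\in \wpf F_{\typ}(X)$,
\[\cc(s(P_1^\alpha,\ldots,P_n^\alpha))=\cc(t(P_1^\alpha,\ldots,P_n^\alpha)).\]

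The key move is to specialise condition \eqref{sec4:eq4} to $Q:=\{t\}$. Since $\cc$ is extensive and $s/\alpha=t/\alpha\in \{t/\alpha\}=Q^\alpha\subseteq\cc(Q^\alpha)$, the hypothesis of \eqref{sec4:eq4} is met, yielding
\[s(P_1^\alpha,\ldots,P_n^\alpha)\subseteq \cc(t(P_1^\alpha,\ldots,P_n^\alpha)).\]
Applying $\cc$ to both sides and using monotonicity together with idempotence gives $\cc(s(P_1^\alpha,\ldots,P_n^\alpha))\subseteq \cc(t(P_1^\alpha,\ldots,P_n^\alpha))$. Interchanging the roles of $s$ and $t$ produces the reverse inclusion, hence equality.

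I do not foresee a genuine obstacle: condition \eqref{sec4:eq4} is tailored to produce exactly this kind of inclusion once the $\alpha$-equality $s/\alpha=t/\alpha$ is invoked. The only care needed is the bookkeeping — recognising that identities of $\typ_\alpha$ correspond to $\alpha$-equal pairs of terms, and that every subset of $F_{\typ_\alpha}(X)$ arises as $P^\alpha$ for some $P\in \wpf F_{\typ}(X)$, so that \eqref{sec4:eq4} is indeed applicable to the arbitrary element of the quotient algebra.
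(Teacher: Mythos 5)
Your proof is correct and follows essentially the same route as the paper's: both take an identity $s\approx t$ of $\typ_{\alpha}$, observe that $s/\alpha\in\cc(\{t/\alpha\})$, apply condition \eqref{sec4:eq4} with $Q=\{t\}$ to get $s(P_1^{\alpha},\ldots,P_n^{\alpha})\subseteq\cc(t(P_1^{\alpha},\ldots,P_n^{\alpha}))$, and conclude by symmetry and the closure-operator axioms that the two sides have equal closures, hence are $\Upsilon_{\cc}$-related. The extra bookkeeping you supply (identities of $\typ_{\alpha}$ correspond to $\alpha$-equal term pairs over an infinite $X$; every element of $\wpf F_{\typ_{\alpha}}(X)$ is some $P^{\alpha}$) is left implicit in the paper but is accurate.
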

\begin{proof}
Let $\cc$ be a a closure operator on $F_{\typ_{\alpha}}(X)$ which
satisfies \eqref{sec5:eq1} and \eqref{sec4:eq4}. Let $t,u\in
F_{\typ}(X)$ and $(t,u)\in\alpha$. Then $t/\alpha=u/\alpha$ and
obviously, $t/\alpha\in \cc(\{u/\alpha\})$.

Hence, by \eqref{sec4:eq4}, for any $P_1,\ldots,P_n\in \wpf F_{\typ}(X)$
\begin{align*}
t(P_1^{\alpha},\ldots,P_n^{\alpha})\subseteq \cc(u(P_1^{\alpha},\ldots,P_n^{\alpha})).
\end{align*}
This implies
\begin{align*}
\cc(t(P_1^{\alpha},\ldots,P_n^{\alpha}))\subseteq \cc(u(P_1^{\alpha},\ldots,P_n^{\alpha})).
\end{align*}
Similarly, we show that $\cc(u(P_1^{\alpha},\ldots,P_n^{\alpha}))\subseteq \cc(t(P_1^{\alpha},\ldots,P_n^{\alpha}))$, and consequently we obtain $\cc(t(P_1^{\alpha},\ldots,P_n^{\alpha}))=\cc(u(P_1^{\alpha},\ldots,P_n^{\alpha}))$.\\
Since $\Upsilon_{\cc}$ is the congruence associated with $\cc$, for any $P_1,\ldots,P_n\in \wpf F_{\typ}(X)$, we have
\begin{align*}
&\cc(t(P_1^{\alpha},\ldots,P_n^{\alpha}))=\cc(u(P_1^{\alpha},\ldots,P_n^{\alpha}))\;\;\Leftrightarrow\\
&(t(P_1^{\alpha},\ldots,P_n^{\alpha}),u(P_1^{\alpha},\ldots,P_n^{\alpha}))\in \Upsilon_{\cc}\;\;\Leftrightarrow\\
&t(P_1^{\alpha},\ldots,P_n^{\alpha})/\Upsilon_{\cc}=u(P_1^{\alpha},\ldots,P_n^{\alpha})/\Upsilon_{\cc}\;\;\Leftrightarrow\\
&t(P_1^{\alpha}/\Upsilon_{\cc},\ldots,P_n^{\alpha}/\Upsilon_{\cc})=
u(P_1^{\alpha}/\Upsilon_{\cc},\ldots,P_n^{\alpha}/\Upsilon_{\cc}),
\end{align*}
which proves that each identity $t\approx u$ true in the variety
$\typ_{\alpha}$ is also satisfied in $(\wpf
F_{\typ_\alpha}(X)/\Upsilon_{\cc},\Om)$ and completes the proof.
\end{proof}

Let $\alpha\in \Con_{\fic}(F_{\typ}(X))$. Let us denote by
$\Clo_{\fic}^{\mathbf{0}}(F_{\typ_{\alpha}}(X))$ the set of all algebraic $\varnothing$-preserving
closure operators on $F_{\typ_{\alpha}}(X)$ which satisfy the
conditions: \eqref{sec5:eq1}, \eqref{sec6:eq1}, \eqref{sec5:eq5} and
\eqref{sec4:eq4}.

By Lemmas \ref{sec6:lem3}, \ref{sec4:lem3}, \ref{sec4:lem2} and
\ref{sec5:lem6}, one obtains the following.
\begin{cor}\label{sec4:cor3}
Let $\alpha\in \Con_{\fic}(F_{\typ}(X))$ and $\cc\in
\Clo_{\fic}^{\mathbf{0}}(F_{\typ_{\alpha}}(X))$.
Moreover, let $\Upsilon_{\cc}\in \Con(\wpf F_{\typ_\alpha}(X))$ be
the congruence associated with $\cc$. Then, the relation
$\Delta_{\Upsilon_{\cc}}$ is a fully invariant congruence on $(\wpf
F_{\typ}(X),\Om,\cup)$ such that $(\wpf
F_{\typ}(X)/\Delta_{\Upsilon_{\cc}},\Om,)\in \typ_{\alpha}$ and
$\widetilde{\Delta}_{\Upsilon_{\cc}}=\alpha$.
\end{cor}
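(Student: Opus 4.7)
The plan is to assemble the conclusion by chaining the lemmas that precede the statement, treating $\Upsilon_{\cc}$ as the intermediate object that lives over $F_{\typ_{\alpha}}(X)$ and then transporting it back to $\wpf F_{\typ}(X)$ via $\Delta$.

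First, I would unpack the hypothesis that $\cc$ belongs to $\Clo_{\fic}^{\mathbf{0}}(F_{\typ_{\alpha}}(X))$, which by definition means $\cc$ is an algebraic $\varnothing$-preserving closure operator on $F_{\typ_{\alpha}}(X)$ satisfying conditions \eqref{sec5:eq1}, \eqref{sec6:eq1}, \eqref{sec5:eq5} and \eqref{sec4:eq4}. Applying Lemma \ref{sec6:lem3} (which uses \eqref{sec5:eq1} and \eqref{sec6:eq1}) gives that $\Upsilon_{\cc}$ is a fully invariant congruence on $(\wpf F_{\typ_{\alpha}}(X),\Om,\cup)$. Then Lemma \ref{sec4:lem3}, applied to \eqref{sec5:eq5}, yields $\widetilde{\Upsilon}_{\cc}=\mathbf{0}_{F_{\typ_{\alpha}}(X)}$, and Lemma \ref{sec4:lem2}, applied to \eqref{sec5:eq1} and \eqref{sec4:eq4}, gives $(\wpf F_{\typ_{\alpha}}(X)/\Upsilon_{\cc},\Om)\in\typ_{\alpha}$.

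At this point $\Upsilon_{\cc}$ satisfies exactly the defining conditions of $\Con_{\fic}^{\mathbf{0}}(\wpf F_{\typ_{\alpha}}(X))$, so I invoke the second half of Lemma \ref{sec5:lem6} with $\psi:=\Upsilon_{\cc}$. That lemma immediately delivers all three required conclusions: $\Delta_{\Upsilon_{\cc}}\in\Con_{\fic}(\wpf F_{\typ}(X))$, ${\rm HSP}((\wpf F_{\typ}(X)/\Delta_{\Upsilon_{\cc}},\Om))\subseteq\typ_{\alpha}$ (which in particular forces $(\wpf F_{\typ}(X)/\Delta_{\Upsilon_{\cc}},\Om)\in\typ_{\alpha}$), and $\widetilde{\Delta}_{\Upsilon_{\cc}}=\alpha$.

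Since every step is a direct invocation of an already-proved lemma with the correct hypotheses in place, there is no genuine obstacle; the only thing worth double-checking is that the four conditions defining $\Clo_{\fic}^{\mathbf{0}}$ line up with exactly the hypotheses demanded by Lemmas \ref{sec6:lem3}, \ref{sec4:lem3} and \ref{sec4:lem2} (they do, by design), and that the $\widetilde{\Upsilon}_{\cc}=\mathbf{0}$ condition is precisely what Lemma \ref{sec5:lem6} requires of $\psi$ to apply its converse direction. So the proof reduces to a short citation chain rather than any fresh computation.
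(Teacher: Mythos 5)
Your proposal is correct and matches the paper exactly: the corollary is stated in the paper as an immediate consequence of Lemmas \ref{sec6:lem3}, \ref{sec4:lem3}, \ref{sec4:lem2} and \ref{sec5:lem6}, which is precisely the citation chain you assemble (the three lemmas establishing $\Upsilon_{\cc}\in\Con_{\fic}^{\mathbf{0}}(\wpf F_{\typ_{\alpha}}(X))$, then the converse half of Lemma \ref{sec5:lem6} applied to $\psi:=\Upsilon_{\cc}$). No further comment is needed.
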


\begin{lm}\label{sec4:lem10}
Let $\Theta\in \Con_{\fic}(\wpf F_{\typ}(X))$. Then, 
$\cc_{\delta_{\Theta}}\in \Clo_{\fic}^{\mathbf{0}}(F_{\typ_{\widetilde{\Theta}}}(X))$.
\end{lm}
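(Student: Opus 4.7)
The plan is to verify that $\cc_{\delta_{\Theta}}$ satisfies every condition defining $\Clo_{\fic}^{\mathbf{0}}(F_{\typ_{\widetilde{\Theta}}}(X))$: being an algebraic $\varnothing$-preserving closure operator together with \eqref{sec5:eq1}, \eqref{sec6:eq1}, \eqref{sec5:eq5} and \eqref{sec4:eq4}. First I would apply Lemma \ref{sec5:lem6} with $\Psi=\Theta$ to conclude that $\delta_{\Theta}$ is a fully invariant congruence on $(\wpf F_{\typ_{\widetilde{\Theta}}}(X),\Om,\cup)$ with $\widetilde{\delta_{\Theta}}=\mathbf{0}_{F_{\typ_{\widetilde{\Theta}}}(X)}$. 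Then Lemmas \ref{sec5:lem10}, \ref{sec5:thm2}, \ref{sec6:lem2} and \ref{sec4:lem3}, each applied to $\delta_{\Theta}$, immediately yield that $\cc_{\delta_{\Theta}}$ is an algebraic $\varnothing$-preserving closure operator satisfying \eqref{sec5:eq1}, \eqref{sec6:eq1} and \eqref{sec5:eq5}. These four verifications are essentially routine citations.

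The substantive step is \eqref{sec4:eq4}. Set $\alpha=\widetilde{\Theta}$ and suppose $s/\alpha\in\cc_{\delta_{\Theta}}(Q^{\alpha})$ for some $\{s\},Q\in\wpf F_{\typ}(X)$. Unfolding the definition of $\cc_{\delta_{\Theta}}$, there is a finite $V\subseteq Q^{\alpha}$ with $(V\cup\{s/\alpha\},V)\in\delta_{\Theta}$. Choosing $Q_0\subseteq Q$ with $Q_0^{\alpha}=V$, the definition of $\delta_{\Theta}$ (relation~\eqref{rel_delta}) yields $(Q_0\cup\{s\},Q_0)\in\Theta$. Now fix $P_1,\dots,P_n\in\wpf F_{\typ}(X)$ and let $\varphi$ be the endomorphism of $(\wpf F_{\typ}(X),\Om,\cup)$ determined by $\{x_i\}\mapsto P_i$ for $i=1,\dots,n$, whose existence is guaranteed by the freeness established in Corollary \ref{sec4:thm2}. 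Full invariance of $\Theta$ then gives $(\varphi(Q_0\cup\{s\}),\varphi(Q_0))\in\Theta$. Since $\varphi$ is a homomorphism with respect to both $\cup$ and the complex $\Om$-operations, this rewrites as
\begin{equation*}
\Bigl(\bigcup_{q\in Q_0}q(P_1,\dots,P_n)\cup s(P_1,\dots,P_n),\ \bigcup_{q\in Q_0}q(P_1,\dots,P_n)\Bigr)\in\Theta.
\end{equation*}

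Passing to the $\alpha$-quotient via the definition of $\delta_{\Theta}$, and using that the quotient map is a homomorphism so $(q(P_1,\dots,P_n))^{\alpha}=q(P_1^{\alpha},\dots,P_n^{\alpha})$, Lemma \ref{cordod} yields
\begin{equation*}
s(P_1^{\alpha},\dots,P_n^{\alpha})\subseteq\cc_{\delta_{\Theta}}\Bigl(\bigcup_{q\in Q_0}q(P_1^{\alpha},\dots,P_n^{\alpha})\Bigr),
\end{equation*}
and monotonicity of $\cc_{\delta_{\Theta}}$ extends this to $\cc_{\delta_{\Theta}}\bigl(\bigcup_{q\in Q}q(P_1^{\alpha},\dots,P_n^{\alpha})\bigr)$, which is exactly \eqref{sec4:eq4}. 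The main obstacle is bookkeeping across the two levels: one must interpret $\{q(P_1^{\alpha},\dots,P_n^{\alpha})\mid q\in Q\}$ as the union $\bigcup_{q\in Q}q(P_1^{\alpha},\dots,P_n^{\alpha})$, and verify that applying the endomorphism $\varphi$ and then the $\alpha$-quotient commutes cleanly with forming complex term operations; once this translation is in place, full invariance of $\Theta$ combined with Lemma \ref{cordod} handles the rest.
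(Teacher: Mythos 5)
Your proposal is correct and follows essentially the same route as the paper: the routine properties are obtained by citing Lemma \ref{sec5:lem6} together with the earlier lemmas, and condition \eqref{sec4:eq4} is verified by translating $s/\widetilde{\Theta}\in\cc_{\delta_{\Theta}}(Q^{\widetilde{\Theta}})$ back to a $\Theta$-relation in $\wpf F_{\typ}(X)$, applying full invariance of $\Theta$ under the substitution endomorphism, and passing back to the quotient via $\delta_{\Theta}$. The only cosmetic differences are that you unfold the definition of $\cc_{\delta_{\Theta}}$ with a finite witness $Q_0$ and invoke full invariance directly, where the paper routes the same argument through Theorem \ref{sec5:lem8} and Lemma \ref{sec6:lem2}.
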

\begin{proof}

By Lemmas \ref{sec5:lem6} and \ref{sec4:lem3} one immediately
obtains that $\cc_{\delta_{\Theta}}$ is an algebraic $\varnothing$-preserving closure
operator on $F_{\typ_{\widetilde{\Theta}}}(X)$ which satisfies the
conditions \eqref{sec5:eq1}, \eqref{sec6:eq1} and \eqref{sec5:eq5}.
Now we will show that the operator $\cc_{\delta_{\Theta}}$ satisfies
also the condition \eqref{sec4:eq4}.

Let $\Theta\in \Con_{\fic}(\wpf F_{\typ}(X))$. By Lemma
\ref{sec5:lem6}, we have $\delta_{\Theta}\in \Con_{\fic}^{\mathbf{0}}(\wpf
F_{\typ_{\widetilde{\Theta}}}(X))$. Now let $\{s\},Q\in\wpf
F_{\typ}(X)$ and $s/\widetilde{\Theta}\in
\cc_{\delta_{\Theta}}(Q^{\widetilde{\Theta}})$. Since
$\cc_{\delta_{\Theta}}$ is the closure operator determined by the
congruence $\delta_{\Theta}$,
\begin{equation*}
(\{s\}\cup Q)^{\widetilde{\Theta}}\mathrel{\delta_{\Theta}}
Q^{\widetilde{\Theta}}.
\end{equation*}
By Theorem \ref{sec5:lem8} one obtains
\begin{align*}
(\{s\}\cup Q)^{\widetilde{\Theta}}\mathrel{\delta_{\Theta}}
Q^{\widetilde{\Theta}}\mathrel{\Leftrightarrow}\{s\}\cup Q
\mathrel{\Theta} Q \mathrel{\Leftrightarrow} s\in\cc_{\Theta}(Q).
\end{align*}
By assumption $\Theta$ is a fully invariant congruence on $(\wpf
F_{\typ}(X),\Om,\cup)$. By Lemma \ref{sec6:lem2}, for any
$P_1,\ldots,P_n\in \wpf F_{\typ}(X)$, we have
\begin{equation*}
s(P_1,\ldots,P_n)\subseteq\cc_\Theta(\{q(P_1,\ldots,P_n)\mid q\in
Q\}),
\end{equation*}
which is equivalent to the following
\begin{equation*}
s(P_1,\ldots,P_n)\cup\{q(P_1,\ldots,P_n)\mid q\in
Q\}\mathrel{\Theta} \{q(P_1,\ldots,P_n)\mid q\in Q\}.
\end{equation*}
By Theorem \ref{sec5:lem8} one obtains
\begin{equation*}
(s(P_1,\ldots,P_n)\cup\{q(P_1,\ldots,P_n)\mid q\in
Q\})^{\widetilde{\Theta}}\mathrel{\delta_\Theta}
\{q(P_1,\ldots,P_n)\mid q\in Q\}^{\widetilde{\Theta}}.
\end{equation*}
It is an immediate observation that
$t(P_1,\ldots,P_n)^{\widetilde{\Theta}}=t(P_1^{\widetilde{\Theta}},\ldots,P_n^{\widetilde{\Theta}})$
for any $t\in F_{\typ}(X)$, so one has
\begin{align*}
&s(P_1^{\widetilde{\Theta}},\ldots,P_n^{\widetilde{\Theta}})\cup\{q(P_1^{\widetilde{\Theta}},
\ldots,P_n^{\widetilde{\Theta}})\mid q\in
Q\}\mathrel{\delta_\Theta}\{q(P_1^{\widetilde{\Theta}},
\ldots,P_n^{\widetilde{\Theta}})\mid q\in
Q\}\mathrel{\Leftrightarrow}\\
&s(P_1^{\widetilde{\Theta}},\ldots,P_n^{\widetilde{\Theta}})\subseteq\cc_{\delta_\Theta}(\{q(P_1^{\widetilde{\Theta}},
\ldots,P_n^{\widetilde{\Theta}})\mid q\in Q\}).
\end{align*}
\end{proof}

\begin{thm}\label{sec4:lem5} Let $\alpha\in
\Con_{\fic}(F_{\typ}(X))$ and $\cc\in
\Clo_{\fic}^{\mathbf{0}}(F_{\typ_{\alpha}}(X))$. Then
\begin{equation*}
\cc_{\delta_{\Delta_{\Upsilon_{\cc}}}}=\cc.
\end{equation*}
Conversely, let $\Theta\in \Con_{\fic}(\wpf F_{\typ}(X))$. Then
\begin{equation*}
\Delta_{\Upsilon_{\cc_{\delta_{\Theta}}}}=\Theta.
\end{equation*}
\end{thm}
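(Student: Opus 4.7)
The plan is to exhibit both identities as immediate consequences of the two correspondences already established in Theorems~\ref{sec5:lem2} and \ref{sec5:lem8}. Roughly, $\cc$ and $\Upsilon_{\cc}$ are mutually inverse under Theorem~\ref{sec5:lem2}, while $\Theta$ and $\delta_{\Theta}$ (respectively $\psi$ and $\Delta_{\psi}$) are mutually inverse under the $\Delta/\delta$ construction of Theorem~\ref{sec5:lem8}. Each of the two claimed identities collapses a four-step round trip; once the side hypotheses are in place, the equalities reduce to substitution.

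For the first equality $\cc_{\delta_{\Delta_{\Upsilon_{\cc}}}}=\cc$, I first observe that, because $\cc\in \Clo_{\fic}^{\mathbf{0}}(F_{\typ_{\alpha}}(X))$ satisfies \eqref{sec5:eq5}, Lemma~\ref{sec4:lem3} gives $\widetilde{\Upsilon_{\cc}}=\mathbf{0}_{F_{\typ_{\alpha}}(X)}$, and Lemma~\ref{sec6:lem3} gives that $\Upsilon_{\cc}$ is a fully invariant congruence on $(\wpf F_{\typ_{\alpha}}(X),\Om,\cup)$. These are exactly the hypotheses needed to apply the second half of Theorem~\ref{sec5:lem8} with $A=F_{\typ}(X)$, with $\alpha$ the given fully invariant congruence, and with $\psi=\Upsilon_{\cc}$; the conclusion is $\Upsilon_{\cc}=\delta_{\Delta_{\Upsilon_{\cc}}}$. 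Applying the operator $\cc_{(\cdot)}$ to both sides and invoking $\cc_{\Upsilon_{\cc}}=\cc$ from Theorem~\ref{sec5:lem2} then yields the desired equality.

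For the second equality $\Delta_{\Upsilon_{\cc_{\delta_{\Theta}}}}=\Theta$, set $\alpha:=\widetilde{\Theta}$. The first half of Theorem~\ref{sec5:lem8} applies directly and delivers $\Theta=\Delta_{\delta_{\Theta}}$. By Lemma~\ref{sec5:lem6} we have $\delta_{\Theta}\in \Con_{\fic}^{\mathbf{0}}(\wpf F_{\typ_{\widetilde{\Theta}}}(X))$, and by Lemma~\ref{sec4:lem10} the associated operator $\cc_{\delta_{\Theta}}$ lies in $\Clo_{\fic}^{\mathbf{0}}(F_{\typ_{\widetilde{\Theta}}}(X))$. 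The identity $\Upsilon_{\cc_{\Xi}}=\Xi$ from Theorem~\ref{sec5:lem2}, used with $\Xi=\delta_{\Theta}$, gives $\Upsilon_{\cc_{\delta_{\Theta}}}=\delta_{\Theta}$; substituting back produces $\Delta_{\Upsilon_{\cc_{\delta_{\Theta}}}}=\Delta_{\delta_{\Theta}}=\Theta$.

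I expect no substantial obstacle: the preceding development was arranged precisely so that $\Upsilon/\cc_{(\cdot)}$ and $\Delta/\delta$ are mutually inverse on the relevant subsets, and the defining conditions of $\Clo_{\fic}^{\mathbf{0}}$---notably \eqref{sec5:eq5}, whose role is exactly to force $\widetilde{\Upsilon_{\cc}}=\mathbf{0}$---are matched to the hypotheses required to invoke Theorem~\ref{sec5:lem8}. The only genuine verification is that these hypothesis transfers go through in both directions, which is what Lemmas~\ref{sec4:lem3}, \ref{sec6:lem3}, \ref{sec5:lem6} and \ref{sec4:lem10} have already supplied.
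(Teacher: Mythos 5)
Your proposal is correct and follows essentially the same route as the paper: both identities are obtained by composing the inversion $\cc=\cc_{\Upsilon_{\cc}}$, $\Xi=\Upsilon_{\cc_{\Xi}}$ from Theorem~\ref{sec5:lem2} with the inversion $\Upsilon_{\cc}=\delta_{\Delta_{\Upsilon_{\cc}}}$, $\Theta=\Delta_{\delta_{\Theta}}$ from Theorem~\ref{sec5:lem8}. The only difference is that you spell out the hypothesis checks (via Lemmas~\ref{sec4:lem3}, \ref{sec6:lem3}, \ref{sec5:lem6}, \ref{sec4:lem10}) that the paper leaves implicit, which is a harmless refinement rather than a different argument.
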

\begin{proof}
Let $\cc\in \Clo_{\fic}^{\mathbf{0}}(F_{\typ_{\alpha}}(X))$ and
$\Upsilon_{\cc}\subseteq\wpf F_{\typ_\alpha}(X)\times\wpf
F_{\typ_\alpha}(X)$ be the congruence associated with $\cc$. By
\eqref{sec4:eq2}, $\Upsilon_{\cc}=\delta_{\Delta_{\Upsilon_{\cc}}}$.
Further, by Theorem \ref{sec5:lem2} we have
\begin{equation*}
\cc=\cc_{\Upsilon_{\cc}}=\cc_{\delta_{\Delta_{\Upsilon_{\cc}}}}.
\end{equation*}

Let $\Theta\in \Con_{\fic}(\wpf F_{\typ}(X))$ and
$\cc_{\delta_{\Theta}}$ be the closure operator determined by the
congruence $\delta_{\Theta}$. By Theorem \ref{sec5:lem2} we obtain
$\delta_{\Theta}=\Upsilon_{\cc_{\delta_{\Theta}}}$ which together
with \eqref{sec4:eq1} imply
\begin{equation*}
\Theta=\Delta_{\delta_{\Theta}}=\Delta_{\Upsilon_{\cc_{\delta_{\Theta}}}}.
\end{equation*}
\end{proof}

Summarizing, Corollary \ref{sec5:cor1} and Theorem \ref{sec4:lem5} give the following.
\begin{cor} \label{sec4:cor5}
Let $\Theta\in \Con_{\fic}(\wpf F_{\typ}(X))$. There is a one-to-one
correspondence between the following sets:
\begin{itemize}
\item[(i)] the set of
all $\typ_{\widetilde{\Theta}}$-preserved
subvarieties of $\mm_{\typ_{\widetilde{\Theta}}}$;
\item[(ii)] the set $\Con_{\fic}^{\mathbf{0}}(\wpf F_{\typ_{\widetilde{\Theta}}}(X))$;
\item[(iii)] the set $\Clo_{\fic}^{\mathbf{0}}(F_{\typ_{\widetilde{\Theta}}}(X))$.
\end{itemize}
\end{cor}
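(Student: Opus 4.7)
The plan is to chain two bijections. The correspondence (i) $\leftrightarrow$ (ii) is already the content of Corollary \ref{sec5:cor1}, so it suffices to exhibit a bijection (ii) $\leftrightarrow$ (iii) and compose. For (ii) $\leftrightarrow$ (iii), I would use the restrictions of the maps from Theorem \ref{sec5:lem2}: send $\psi$ to $\cc_{\psi}$ in one direction, and $\cc$ to $\Upsilon_{\cc}$ in the other.

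The first task is to check that these have the correct codomains. Starting from $\psi \in \Con_{\fic}^{\mathbf{0}}(\wpf F_{\typ_{\widetilde{\Theta}}}(X))$: Lemmas \ref{sec5:lem10} and \ref{sec5:thm2} yield the algebraic $\varnothing$-preserving closure property and condition \eqref{sec5:eq1}; full invariance of $\psi$ together with Lemma \ref{sec6:lem2} gives \eqref{sec6:eq1}; and $\widetilde{\psi} = \mathbf{0}$ together with Lemma \ref{sec4:lem3} gives \eqref{sec5:eq5}. The less immediate point is \eqref{sec4:eq4}: for this I would lift $\psi$ to $\Theta := \Delta_{\psi} \in \Con_{\fic}(\wpf F_{\typ}(X))$ (which is well defined and fully invariant by Lemma \ref{sec5:lem6}), apply Lemma \ref{sec4:lem10} to obtain that $\cc_{\delta_{\Theta}}$ satisfies \eqref{sec4:eq4}, and then identify $\cc_{\delta_{\Theta}} = \cc_{\psi}$ using the identity $\delta_{\Delta_{\psi}} = \psi$ from Theorem \ref{sec5:lem8}. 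Conversely, for $\cc \in \Clo_{\fic}^{\mathbf{0}}(F_{\typ_{\widetilde{\Theta}}}(X))$, Lemma \ref{sec6:lem3} gives full invariance of $\Upsilon_{\cc}$, Lemma \ref{sec4:lem3} together with \eqref{sec5:eq5} yields $\widetilde{\Upsilon_{\cc}} = \mathbf{0}$, and Lemma \ref{sec4:lem2} gives the variety membership $(\wpf F_{\typ_{\widetilde{\Theta}}}(X)/\Upsilon_{\cc},\Om) \in \typ_{\widetilde{\Theta}}$ from \eqref{sec4:eq4}.

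Once both maps are shown to land in the prescribed sets, they are mutually inverse as direct restrictions of the identities $\cc = \cc_{\Upsilon_{\cc}}$ and $\psi = \Upsilon_{\cc_{\psi}}$ from Theorem \ref{sec5:lem2}; alternatively, one can read these inversions off from Theorem \ref{sec4:lem5} combined with Theorem \ref{sec5:lem8}, which together realize the chain $\psi \leftrightarrow \Delta_{\psi} \leftrightarrow \cc_{\delta_{\Delta_{\psi}}} = \cc_{\psi}$ and $\cc \leftrightarrow \Upsilon_{\cc} \leftrightarrow \delta_{\Delta_{\Upsilon_{\cc}}} = \Upsilon_{\cc}$. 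Composing with Corollary \ref{sec5:cor1} then yields the desired three-way correspondence. I expect the only genuinely delicate point is the transfer of \eqref{sec4:eq4} between the two ambient languages $\wpf F_{\typ}(X)$ and $\wpf F_{\typ_{\widetilde{\Theta}}}(X)$; once that is done via the $\delta/\Delta$ translation, every remaining verification is a citation of a Section~3 lemma.
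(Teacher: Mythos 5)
Your proposal is correct and is essentially the paper's own argument: the paper derives this corollary by combining Corollary~\ref{sec5:cor1} (your step (i)$\leftrightarrow$(ii)) with Theorem~\ref{sec4:lem5}, whose underlying maps $\cc\mapsto\Delta_{\Upsilon_{\cc}}$ and $\Theta\mapsto\cc_{\delta_{\Theta}}$ (supported by Lemma~\ref{sec4:lem10}, Corollary~\ref{sec4:cor3}/Lemma~\ref{sec4:lem2}, and the $\delta/\Delta$ translation of Theorem~\ref{sec5:lem8}) are exactly the restriction of the Theorem~\ref{sec5:lem2} bijection that you describe. The only difference is bookkeeping --- you phrase (ii)$\leftrightarrow$(iii) directly while the paper routes it through congruences on $\wpf F_{\typ}(X)$ --- and your identification of the transfer of condition \eqref{sec4:eq4} via Lemma~\ref{sec4:lem10} as the one delicate point matches where the paper invests its effort.
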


\begin{exm}\label{sec6:exm6}
Let $\mathcal{SL}$ denote the variety of all semilattices and  let
$\mm_{\mathcal{SG}}$ be the variety of all semilattice ordered
semigroups. It is well known that one of its subvariety is the
variety $\mm_{\mathcal{SL}}$ of semilattice ordered semilattices $(A,\cdot,+)$.
The free semilattice $(F_{\mathcal{SL}}(X),\cdot)$ over a set $X$ is
isomorphic to the algebra $(\wpf X,\cup)$.

M. Ku\v{r}il and L. Pol\'{a}k (\cite{KP05}) found all algebraic $\varnothing$-preserving closure
operators on $\wpf X$ with properties
\eqref{sec5:eq1}, \eqref{sec6:eq1}, \eqref{sec5:eq5} and \eqref{sec4:eq4}. They showed that there are exactly four such
operators: for any $\emptyset\neq T=\{t_1,\ldots,t_k\}\subseteq\wpf
X$
\begin{align*}
&r\in\cc_1(T)\;\; \Leftrightarrow \;\; \exists(s_1,\ldots,s_j\in
T)\; \; r= s_{1}\cup\ldots\cup s_{j},\\
&r\in\cc_2(T)\;\; \Leftrightarrow \;\; \exists(t_j\in T)\;\;t_j\subseteq r \subseteq t_{1}\cup\ldots\cup t_{k},\\
&r\in\cc_3(T)\;\; \Leftrightarrow \;\; r \subseteq t_{1}\cup\ldots\cup t_{k},\\
&r\in\cc_4(T)\;\; \Leftrightarrow \;\; \exists(t_j\in
T)\;\;t_j\subseteq r.
\end{align*}

This shows that there are four $\mathcal{SL}$-preserved subvarieties
of $\mm_{\mathcal{SG}}$ and confirms previous results of R. McKenzie
and A. Romanowska (see \cite{MR79}) that there are exactly four
non-trivial subvarieties of the variety $\mm_{\mathcal{SL}}$: the
variety $\mm_{\mathcal{SL}}$ itself, the variety $\mathcal{DQL}$ of
distributive bisemilattices defined by $x+yz=(x+y)(x+z)$, the
variety $\mathcal{DL}$ of distributive lattices and the variety $\mathcal{SSL}$ of \textit{stammered semilattices}
defined by $x\cdot y=x+y$.

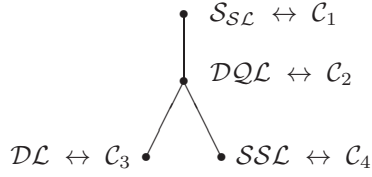
\begin{figure}[h]
\begin{center}
\setlength{\unitlength}{1mm}
\begin{picture}(34,23)(0,0)
\put(19,20){\makebox(0,0){$\mm_{\mathcal{SL}}\;\leftrightarrow\;\cc_1$}}
\put(7,20){\circle*{1}} \put(7,11){\line(0,1){9}}
\put(2,1){\circle*{1}}
\put(-8,1){\makebox(0,0){$\mathcal{DL}\;\leftrightarrow\;\cc_3$}}
\put(12,1){\circle*{1}}
\put(23,1){\makebox(0,0){$\mathcal{SSL}\;\leftrightarrow\;\cc_4$}}
\put(7,11){\circle*{1}}
\put(20,12){\makebox(0,0){$\mathcal{DQL}\;\leftrightarrow\;\cc_2$}}
\put(2,1){\line(1,2){5}} \put(12,1){\line(-1,2){5}}
\end{picture}
\end{center}
\caption{$\mathcal{SL}$-preserved subvarieties of $\mm_{\mathcal{SG}}$}\label{F:2}
\end{figure}
\end{exm}

In \cite{PZ12a} we proved that for each $\Theta\in \Con_{\fic}(\wpf
F_{\typ}(X))$ the set of all $\typ_{\widetilde{\Theta}}$-preserved
subvarieties of $S_{\typ_{\widetilde{\Theta}}}$ is a complete
join-semilattice. Now we can characterize this semilattice by
closure operators.

\begin{thm}\label{sec5:thm14}
Let $\Theta\in \Con_{\fic}(\wpf F_{\typ}(X))$. The complete
semilattice of all $\typ_{\widetilde{\Theta}}$-preserved
subvarieties of $\mm_{\typ_{\widetilde{\Theta}}}$ is
isomorphic to the semilattice
$(\Clo_{\fic}^{\mathbf{0}}(F_{\typ_{\widetilde{\Theta}}}(X)),\leq)$.

Let $I$ be a set and for each $i\in I$, $\cc_i\in \Clo_{\fic}^{\mathbf{0}}(F_{\typ_{\widetilde{\Theta}}}(X))$. The closure operator
\begin{equation*}
\cc_{\bigcap\limits_{i\in I}\Upsilon_{\cc_i}}:\p
F_{\typ_{\widetilde{\Theta}}}(X)\to \p
F_{\typ_{\widetilde{\Theta}}}(X)
\end{equation*}
is the least upper bound of $\{\cc_{i}\}_{i\in I}$ with respect to
$\leq$.
\end{thm}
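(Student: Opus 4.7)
The plan is to push everything through the bijection $\Phi$ supplied by Corollary \ref{sec4:cor5}, to show that $\Phi$ is an order isomorphism, and then to identify the join formula through the resulting correspondence. By Lemma \ref{sec6:lem9}, every $\typ_{\widetilde\Theta}$-preserved subvariety has the form $\mathcal{K}={\rm HSP}(\wpf F_{\typ}(X)/\Psi)$ with $\Psi\in\Con_{\fic}(\wpf F_{\typ}(X))$ and $\widetilde\Psi=\widetilde\Theta$. Composing the assignments $\Psi\mapsto\delta_\Psi$ from Lemma \ref{sec5:lem6} and $\delta_\Psi\mapsto\cc_{\delta_\Psi}$ from Theorem \ref{sec4:cor2} gives $\Phi(\mathcal{K})=\cc_{\delta_\Psi}$. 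Monotonicity is a direct chase: $\mathcal{K}_1\subseteq\mathcal{K}_2$ iff $\Psi_1\supseteq\Psi_2$ (larger variety corresponds to fewer identities), iff $\delta_{\Psi_1}\supseteq\delta_{\Psi_2}$ by \eqref{rel_delta}, iff $\Upsilon_{\cc_{\delta_{\Psi_2}}}\subseteq\Upsilon_{\cc_{\delta_{\Psi_1}}}$ by Theorem \ref{sec5:lem2}, which is exactly $\Phi(\mathcal{K}_1)\leq\Phi(\mathcal{K}_2)$. Thus $\Phi$ is an order isomorphism between the two complete semilattices in the statement.

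To identify the join of $\{\cc_i\}_{i\in I}$ in $\Clo_{\fic}^{\mathbf{0}}(F_{\typ_{\widetilde\Theta}}(X))$, set $\Psi_i:=\Delta_{\Upsilon_{\cc_i}}$. Theorem \ref{sec4:lem5} yields $\cc_i=\cc_{\delta_{\Psi_i}}$, and Theorem \ref{sec5:lem2} then gives $\Upsilon_{\cc_i}=\delta_{\Psi_i}$. The definition \eqref{rel_delta} shows immediately that $\Psi\mapsto\delta_\Psi$ commutes with arbitrary intersections, so $\bigcap_{i\in I}\Upsilon_{\cc_i}=\delta_{\bigcap_{i\in I}\Psi_i}$. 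The intersection $\bigcap_i\Psi_i$ is itself a fully invariant congruence on $(\wpf F_{\typ}(X),\Om,\cup)$ and, by Lemma \ref{sec5:lem9}, satisfies $\widetilde{\bigcap_i\Psi_i}=\bigcap_i\widetilde{\Psi_i}=\widetilde\Theta$. Hence Lemma \ref{sec4:lem10} applies and gives $\cc_{\bigcap_i\Upsilon_{\cc_i}}=\cc_{\delta_{\bigcap_i\Psi_i}}\in\Clo_{\fic}^{\mathbf{0}}(F_{\typ_{\widetilde\Theta}}(X))$.

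The least-upper-bound property is now formal. By Theorem \ref{sec5:lem2}, $\Upsilon_{\cc_{\bigcap_i\Upsilon_{\cc_i}}}=\bigcap_i\Upsilon_{\cc_i}\subseteq\Upsilon_{\cc_j}$ for each $j$, so $\cc_j\leq\cc_{\bigcap_i\Upsilon_{\cc_i}}$; and for any upper bound $\cc$, the assumption $\Upsilon_\cc\subseteq\Upsilon_{\cc_i}$ for all $i$ forces $\Upsilon_\cc\subseteq\bigcap_i\Upsilon_{\cc_i}$, giving $\cc_{\bigcap_i\Upsilon_{\cc_i}}\leq\cc$. The only delicate point is showing that $\cc_{\bigcap_i\Upsilon_{\cc_i}}$ really lies in $\Clo_{\fic}^{\mathbf{0}}$ — in particular that property \eqref{sec4:eq4} survives the intersection — and the trick is precisely the rewriting as $\cc_{\delta_{\bigcap_i\Psi_i}}$, so that Lemma \ref{sec4:lem10} (which packages \eqref{sec4:eq4} together with \eqref{sec5:eq1}, \eqref{sec6:eq1} and \eqref{sec5:eq5}) applies directly, rather than having to verify \eqref{sec4:eq4} by hand.
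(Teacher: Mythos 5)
Your argument is correct, and for the part the paper actually writes out in detail it takes a noticeably different route. The paper's own proof is terse: it handles only the join formula, obtains conditions \eqref{sec5:eq1}, \eqref{sec6:eq1} and \eqref{sec5:eq5} for $\cc_{\bigcap_i\Upsilon_{\cc_i}}$ from the completeness of the lattice of fully invariant congruences together with Corollary \ref{sec5:cor2} and Lemma \ref{sec5:lem9}, and then disposes of \eqref{sec4:eq4} with the remark that one argues ``using similar methods as in the proof of Lemma \ref{sec4:lem10}'' --- i.e.\ it asks the reader to redo that computation. You instead transport everything to the level of $(\wpf F_{\typ}(X),\Om,\cup)$: setting $\Psi_i=\Delta_{\Upsilon_{\cc_i}}$, using Theorem \ref{sec4:lem5} and Theorem \ref{sec5:lem2} to write $\Upsilon_{\cc_i}=\delta_{\Psi_i}$, observing from \eqref{rel_delta} that $\delta$ commutes with intersections (which is legitimate here because all the $\Psi_i$ have the same trace $\widetilde{\Psi_i}=\widetilde{\Theta}$, by Lemma \ref{sec5:lem9}), and then applying Lemma \ref{sec4:lem10} verbatim to the fully invariant congruence $\bigcap_i\Psi_i$. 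This buys a closed argument with no condition left to re-verify by hand --- in particular \eqref{sec4:eq4} comes for free --- at the cost of a slightly longer setup; it is arguably the cleaner packaging of the same underlying correspondence. You also spell out the order-isomorphism in the first claim (via Lemma \ref{sec6:lem9}, the Birkhoff anti-isomorphism between subvarieties and fully invariant congruences on the free algebra over the infinite set $X$, and the reversal built into the definition of $\leq$ on closure operators), which the paper leaves implicit as a consequence of Corollary \ref{sec4:cor5}. The formal least-upper-bound verification at the end is routine and matches what the paper tacitly assumes.
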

\begin{proof}
Since all fully invariant congruences on $(\wpf
F_{\typ_{\widetilde{\Theta}}}(X),\Om,\cup)$ form a complete lattice
and by Corollary \ref{sec5:cor2} and Lemma \ref{sec5:lem9}, we
obtain that $\cc_{\bigcap\limits_{i\in I}\Upsilon_{\cc_i}}$ is an
algebraic $\varnothing$-preserving closure operator which satisfies the conditions
\eqref{sec5:eq1}, \eqref{sec6:eq1} and \eqref{sec5:eq5}. Finally,
using similar methods as in the proof of Lemma \ref{sec4:lem10}, one
can show that it also satisfies the condition \eqref{sec4:eq4}.
\end{proof}

\begin{exm}\cite{KP05}\label{sec5:ex1}
Let $(S,\cdot)$ be a semigroup, $S^1:=S\cup\{1\}$, where $1$ is a
neutral element, and $r\in \mathbb{N}$. A subset $A\subseteq S$ is
said to be \emph{$r$-closed} if it satisfies the following
condition: for each $p,q\in S^1$ and $u_1,u_2,\ldots,u_r\in S$
\begin{equation*}
pu_1q,pu_2q,\ldots,pu_rq\in A \;\; \Rightarrow\;\; pu_1u_2\ldots u_rq\in A.
\end{equation*}
Denote by $[X]_r$ the $r$-closed subset generated
by a non-empty set $X\subseteq A$, i.e. the intersection of all
non-empty  $r$-closed subsets of $S$ that include $X$. For $X=\emptyset$, let $ [\emptyset]_r:=\emptyset$.

The mapping
\begin{equation*}
\cc_r\colon\p S\to \p S,\;\; X\mapsto [X]_r
\end{equation*} is a closure operator defined on $S$.

Let $\typ$ be the variety of all groupoids and $\mm_r\subseteq\typ$
be the variety of all semigroups satisfying the identity $x^r\approx
x$. It was showed in \cite{KP05} that $\cc_r$ is the greatest
element in $\Clo_{\fic}^{\mathbf{0}}(F_{\mm_r}(X))$, with respect to the order
$\leq$.
\end{exm}

\section{Idempotent subvarieties}\label{sec6}

Theorems \ref{sec6:thm1}, \ref{sec6:thm2} and Corollary
\ref{sec6:cor2} are very general and they don't give a
straightforward description of the variety lattice of some
$\typ$-semilattice ordered algebras. But Theorem \ref{sec5:thm14} allows us to simplify the problem
restricting it to some particular subvarieties $\vv$ of $\typ$. For example, it
is of big interest to study idempotent subvarieties of semilattice
ordered $\Om$-algebras. Recall that a variety $\vv$ is called
\emph{idempotent} if every algebra $(A,\Om)$ in $\vv$ is idempotent,
i.e. satisfies the identities $\om(x,\ldots,x)\= x$ for every
$\om\in \Om$. So, idempotency is not a linear identity and
the variety of semilattice ordered idempotent algebras needn't be
idempotent, although this is a necessary condition as shown in
\cite{PZ12}.

In general, very little is known about idempotent subvarieties of
the variety of all semilattice ordered $\typ$-algebras. In 2005
S.~Ghosh, F.~Pastijn, X.Z.~Zhao described in \cite{GPZ05} and
\cite{P05} the lattice of all subvarieties of the variety generated
by all ordered idempotent semigroups (\emph{ordered bands}).
In the same time M.~Ku\v{r}il and L.~Pol\'{a}k
showed that the closure operator $\cc_2$ described in Example \ref{sec5:ex1}
is responsible for the idempotent subvarieties of the variety generated
by ordered bands.

A similar characterization can be obtained for the idempotent
subvarieties of the variety generated by semilattice ordered idempotent $n$-semigroups.

\begin{de} Let $2\leq n\in \mathbb{N}$. An algebra $(A,f)$ with
one $n$-ary operation $f$ is called an \emph{$n$-semigroup}, if the
following associative laws hold:
\begin{multline*}
  f(f(a_1, \dots, a_n), a_{n+1}, \dots, a_{2n-1}) \\
  = \cdots
  = f(a_1, \dots, a_i, f(a_{i+1}, \dots, a_{i+n}), a_{i+n+1}, \dots, a_{2n-1})
\\
   = \cdots = f(a_1, \dots, a_{n-1}, f(a_n, \dots, a_{2n-1})),
\end{multline*}
for all $a_1, \dots, a_{2n-1} \in A$. A semigroup is a $2$-semigroup in this sense.
\end{de}
Let $(A,f)$ be an $n$-semigroup. A subsemigroup $B\subseteq A$ of $(A,f)$ is said to be \emph{closed $n$-semigroup} if it
satisfies the following condition: \\
for each
$p_1,\ldots,p_{n-1},q_1,\ldots,q_{n-1},u_1,u_2,\ldots,u_n\in A$
\begin{align*}
&f(f(p_1,\ldots,p_{n-1},u_1),q_1,\ldots,q_{n-1}),\ldots,f(f(p_1,\ldots,p_{n-1},u_n),q_1,\ldots,q_{n-1})\in B
\;\; \Rightarrow\\
&f(f(p_1,\ldots,p_{n-1},f(u_1,\ldots,u_n)),q_1,\ldots,q_{n-1})\in B.
\end{align*}

Denote by
$[X]$ the closed $n$-semigroup generated by a non-empty set
$X\subseteq A$, i.e. the intersection of all non-empty closed
$n$-semigroups of $A$ that include $X$. For $X=\emptyset$, let $[\emptyset]:=\emptyset$. Since each $n$-semigroup
$(A,f)$ is of course a closed $n$-semigroup and the intersection of any non-empty family of
closed $n$-semigroups is a closed $n$-semigroup, 
the mapping
\begin{equation*}
\cc_{I_n}\colon\p A\to \p A,\;\; X\mapsto [X]
\end{equation*}
is a closure operator defined on $A$.

Let $(A,f)$ be an idempotent $n$-semigroup and $X\subseteq A$.
Let us define sets $X^{[k]}$ by the following recursion:
\begin{align*}
&X^{[0]}:= X,\\
&X^{[k+1]}:=\{f(f(p_1,\ldots,p_{n-1},f(u_1,\ldots,u_n)),q_1,\ldots,q_{n-1})\mid \\
&f(f(p_1,\ldots,p_{n-1},u_1),q_1,\ldots,q_{n-1}),\ldots,f(f(p_1,\ldots,p_{n-1},u_n),q_1,\ldots,q_{n-1})\in
X^{[k]}\}.
\end{align*}

It is clear that for any $k\in \mathbb{N}$, $X\subseteq
X^{[k]}\subseteq X^{[k+1]}$ and for $X\subseteq Y\subseteq A$, one
has $X^{[k]}\subseteq Y^{[k]}$. Moreover,
\begin{equation*}
[X]= \bigcup\limits_{k \in \mathbb{N}}X^{[k]}.
\end{equation*}
This implies that for $X_1,\ldots,X_n\subseteq A$, we obtain
\begin{equation*}
f([X_1],\ldots,[X_n])\subseteq [f(X_1,\ldots,X_n)].
\end{equation*}

Let $\p_\cc A:=\{[X]\mid \emptyset\neq X\subseteq A\}$ be the
set of all non-empty closed $n$-semigroups of $(A,f)$. 
By Example \ref{sec3:exm4}, the algebra $(\p_\cc A,f_\cc,+)$ is a semilattice
ordered $n$-semigroup with operations
defined in the following way
\begin{equation*}
f_\cc([X_1],\ldots,[X_n]):=[f(X_1,\ldots,X_n)],
\end{equation*}
and
\begin{equation*}
[X_1]+[X_2]:=[X_1\cup X_2].
\end{equation*}
Moreover, this algebra is idempotent. The algebra $(\p_{fin_\cc} A,f_\cc,+)$, where $\p_{fin_\cc} A$ is
the set of all non-empty, finitely generated closed $n$-semigroups of $(A,f)$,
is a subalgebra of $(\p_\cc A,f_\cc,+)$.

Let $\mathcal{A}_{I_n}$ be the variety of all idempotent $n$-semigroups.
\begin{thm}[For $n=2$, \cite{Z02}]\label{thm:zhao}
The free algebra in $\mm_{\mathcal{A}_{I_n}}$ over a set $X$ is isomorphic to the semilattice ordered $n$-semigroup of finitely generated closed $n$-semigroups
of the free $\mathcal{A}_{I_n}$-algebra over $X$.
\end{thm}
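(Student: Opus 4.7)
The plan is to verify the universal mapping property directly: I would show that $(\p_{fin_\cc} F_{\mathcal{A}_{I_n}}(X), f_\cc, +)$ together with the map $\iota\colon X\to\p_{fin_\cc} F_{\mathcal{A}_{I_n}}(X)$, $x\mapsto\{x\}$, is free in $\mm_{\mathcal{A}_{I_n}}$ over $X$. First I would note that by Example~\ref{sec3:exm4} and the inclusion $f([X_1],\ldots,[X_n])\subseteq[f(X_1,\ldots,X_n)]$ established just before the theorem, this algebra is already a semilattice ordered $n$-semigroup. To see it lies in $\mm_{\mathcal{A}_{I_n}}$, I would check that its $n$-semigroup reduct is idempotent: for any closed $n$-semigroup $[B]$, sub-$n$-semigroup closedness gives $f([B],\ldots,[B])\subseteq[B]$ while idempotency in $F_{\mathcal{A}_{I_n}}(X)$ yields $[B]\subseteq f([B],\ldots,[B])$, so $f_\cc([B],\ldots,[B])=[f([B],\ldots,[B])]=[B]$.

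Next I would verify that $\{\{x\}:x\in X\}$ generates the algebra and deduce uniqueness of the homomorphic extension. For a generator $x\in X$, the singleton $\{x\}$ is itself a closed $n$-semigroup: any equality $f(f(\vec{p}, u_j), \vec{q})=x$ forces the variable content of each $u_j$ to be contained in $\{x\}$, so $u_j=x$ by idempotency, and the closure conclusion $f(f(\vec{p},f(u_1,\ldots,u_n)),\vec{q})=f(f(\vec{p},x),\vec{q})=x$ is automatic. Iterating $f_\cc$ on these singleton atoms builds $[\{t\}]$ for every $t\in F_{\mathcal{A}_{I_n}}(X)$ via \eqref{ex3:eq2}, and iterating $+$ yields $[T]=[\{t_1\}]+\cdots+[\{t_k\}]$ for every finite nonempty $T=\{t_1,\ldots,t_k\}$. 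Thus, for any target $(D,f,+)\in\mm_{\mathcal{A}_{I_n}}$ and map $\phi\colon X\to D$, extending $\phi$ first to the unique $n$-semigroup homomorphism $\phi_1\colon F_{\mathcal{A}_{I_n}}(X)\to D$, any homomorphic extension $\bar\phi\colon\p_{fin_\cc}F_{\mathcal{A}_{I_n}}(X)\to D$ of $\phi$ is forced to satisfy $\bar\phi([T])=\sum_{t\in T}\phi_1(t)$, giving uniqueness.

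The main obstacle, and the crux of the argument, is to show the formula above is well-defined: that $[T]=[T']$ implies $\sum_{t\in T}\phi_1(t)=\sum_{t'\in T'}\phi_1(t')$ in $D$. By symmetry and the inductive description $[T]=\bigcup_k T^{[k]}$, this reduces to verifying that each newly-introduced element satisfies
\begin{equation*}
\phi_1\bigl(f(f(\vec{p},f(u_1,\ldots,u_n)),\vec{q})\bigr)\;\leq\;\sum_{j=1}^{n}\phi_1\bigl(f(f(\vec{p},u_j),\vec{q})\bigr)
\end{equation*}
in the semilattice order of $D$. This in turn reduces to the pointwise inequality $f(b_1,\ldots,b_n)\leq b_1+\cdots+b_n$, valid in every semilattice ordered idempotent $n$-semigroup: setting $s:=b_1+\cdots+b_n$, idempotency gives $s=f(s,\ldots,s)$, and distributivity expands the right side into a sum in which $f(b_1,\ldots,b_n)$ appears as one summand. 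Combined with monotonicity~\eqref{monsub} of the outer context $f(f(\vec{p},-),\vec{q})$, this yields the required bound after applying $\phi_1$. Once well-definedness is secured, the homomorphism property of $\bar\phi$ with respect to $+$ and $f_\cc$ is immediate from distributivity in $D$ and the definitions of the complex operations, completing the verification of the universal property.
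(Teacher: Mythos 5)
Your proposal is correct and follows essentially the same route as the paper: both verify the universal mapping property by extending a map $X\to A$ first to the free $n$-semigroup and then to $(\p_{fin_\cc}F_{\mathcal{A}_{I_n}}(X),f_\cc,+)$ via $[T]\mapsto\sum_{t\in T}\overline{h}(t)$. The paper's proof is a two-line sketch that simply asserts this extension exists and is unique, whereas you supply the one genuinely nontrivial step it leaves implicit---well-definedness of the formula, obtained from the inequality $f(b_1,\ldots,b_n)\leq b_1+\cdots+b_n$ (which is where idempotency enters) together with an induction along the closure chain $T^{[k]}$.
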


\begin{proof}
Let $(A,f,+)\in \mm_{\mathcal{A}_{I_n}}$. Then,
$(A,f)\in \mathcal{A}_{I_n}$ and any mapping $h\colon X\to A$
may be uniquely extended to an $f$-homomorphism
$\overline{h}\colon(F_{\mathcal{A}_{I_n}}(X),f)\to (A,f)$.

Further, homomorphism $\overline{h}$ can be extended to a
unique $\{f,+\}$-homomorphism
\begin{equation*}
\overline{\overline{h}}\colon(\p_{fin_\cc} F_{\mathcal{A}_{I_n}}(X),f_\cc,+)\to (A,f,+);\;\;\;\;
\overline{\overline{h}}([T])\mapsto\sum\limits_{t\in
T}\overline{h}(t),
\end{equation*}
where $T$ is a finite subset of $F_{\mathcal{A}_{I_n}}(X)$.
\end{proof}

\begin{cor}
The lattice $(\mathcal{L}(\mm_{\mathcal{A}_{I_n}}),\subseteq)$ is isomorphic to
the lattice \\ $(\{\cc\in \Clo_{\fic}^{\mathbf{0}}(F_{\mathcal{A}_{I_n}}(X))\mid \cc\leq\cc_{I_n}\},\leq)$.
\end{cor}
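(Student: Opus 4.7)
The plan is to identify the free algebra in $\mm_{\mathcal{A}_{I_n}}$ via Theorem \ref{thm:zhao} and then transport subvarieties into closure operators through the dictionary of Section \ref{sec3}. First I would use Theorem \ref{thm:zhao} to write $F_{\mm_{\mathcal{A}_{I_n}}}(X)\cong(\p_{fin_\cc} F_{\mathcal{A}_{I_n}}(X),f_\cc,+)$ and observe that since two finite non-empty $Q,R\subseteq F_{\mathcal{A}_{I_n}}(X)$ generate the same closed $n$-semigroup exactly when $Q\mathrel{\Upsilon_{\cc_{I_n}}} R$, this free algebra coincides with $(\wpf F_{\mathcal{A}_{I_n}}(X),f,\cup)/\Upsilon_{\cc_{I_n}}$. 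Therefore $\mm_{\mathcal{A}_{I_n}}={\rm HSP}(\wpf F_{\mathcal{A}_{I_n}}(X)/\Upsilon_{\cc_{I_n}})$, and by Birkhoff's theorem $\mathcal{L}(\mm_{\mathcal{A}_{I_n}})$ is anti-isomorphic to the lattice of fully invariant congruences on this quotient.

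Next I would invoke the congruence correspondence theorem to identify fully invariant congruences on the quotient with those $\Psi$ on $(\wpf F_{\mathcal{A}_{I_n}}(X),f,\cup)$ satisfying $\Psi\supseteq\Upsilon_{\cc_{I_n}}$. By Theorem \ref{sec4:cor2} these $\Psi$ correspond bijectively to $\cc\in\Clo_{\fic}(F_{\mathcal{A}_{I_n}}(X))$, and the definition $\cc_1\leq\cc_2\Leftrightarrow\Upsilon_{\cc_2}\subseteq\Upsilon_{\cc_1}$ translates $\Psi\supseteq\Upsilon_{\cc_{I_n}}$ into $\cc\leq\cc_{I_n}$. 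Composing Birkhoff's anti-isomorphism with this second anti-correspondence then yields an order-preserving lattice isomorphism between $\mathcal{L}(\mm_{\mathcal{A}_{I_n}})$ and the indicated set of closure operators.

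Finally I must check that every such $\cc$ actually lies in $\Clo_{\fic}^{\mathbf{0}}$. Condition \eqref{sec4:eq4} I would obtain from the fact that $(\wpf F_{\mathcal{A}_{I_n}}(X)/\Upsilon_\cc,f,\cup)$ is a homomorphic image of $(\p_{fin_\cc} F_{\mathcal{A}_{I_n}}(X),f_\cc,+)\in\mm_{\mathcal{A}_{I_n}}$, whose $f$-reduct belongs to $\mathcal{A}_{I_n}$, and then reproduce the derivation of Lemma \ref{sec4:lem10} with $\Theta=\Upsilon_\cc$. Condition \eqref{sec5:eq5} I would derive via Lemma \ref{sec4:lem3} from $\widetilde{\Upsilon_\cc}=\mathbf{0}_{F_{\mathcal{A}_{I_n}}(X)}$, using that $\cc_{I_n}$ itself separates singletons (distinct free generators of $F_{\mathcal{A}_{I_n}}(X)$ span distinct closed $n$-semigroups). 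The hard part will be the bookkeeping behind \eqref{sec4:eq4}: tracing an $\mathcal{A}_{I_n}$-identity through the successive quotients $F_\typ(X)\twoheadrightarrow F_{\mathcal{A}_{I_n}}(X)$ and $\wpf F_{\mathcal{A}_{I_n}}(X)\twoheadrightarrow\wpf F_{\mathcal{A}_{I_n}}(X)/\Upsilon_\cc$ and recovering the precise set-theoretic inclusion that \eqref{sec4:eq4} demands.
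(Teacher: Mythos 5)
Your overall strategy --- identify the $\mm_{\mathcal{A}_{I_n}}$-free algebra via Theorem \ref{thm:zhao}, pass to fully invariant congruences above $\Upsilon_{\cc_{I_n}}$ on $(\wpf F_{\mathcal{A}_{I_n}}(X),f,\cup)$, and translate through the dictionary of Theorem \ref{sec4:cor2} --- is reasonable up to and including the identification of $\mathcal{L}(\mm_{\mathcal{A}_{I_n}})$ with $\{\cc\in \Clo_{\fic}(F_{\mathcal{A}_{I_n}}(X))\mid \cc\leq\cc_{I_n}\}$ (even there you should justify why fully invariant congruences lift across the quotient, since $(\wpf F_{\mathcal{A}_{I_n}}(X),f,\cup)$ is not relatively free in $\mm_{\mathcal{A}_{I_n}}$ --- the paper handles this with the $\delta_{\Psi}/\Delta_{\psi}$ machinery of Theorem \ref{sec5:lem8} and Lemma \ref{sec5:lem6}). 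The genuine gap is your final step, where you claim every such $\cc$ lies in $\Clo_{\fic}^{\mathbf{0}}$. Your argument for condition \eqref{sec5:eq5} runs the order the wrong way: by definition $\cc\leq\cc_{I_n}$ means $\Upsilon_{\cc_{I_n}}\subseteq\Upsilon_{\cc}$, so $\Upsilon_{\cc}$ is the \emph{coarser} relation, and the fact that $\cc_{I_n}$ separates singletons (i.e.\ $\widetilde{\Upsilon}_{\cc_{I_n}}=\mathbf{0}$) gives no control over $\widetilde{\Upsilon}_{\cc}$. The step cannot be repaired: the operator $\cc(T)=F_{\mathcal{A}_{I_n}}(X)$ for all non-empty $T$ satisfies $\cc\leq\cc_{I_n}$ and corresponds to the trivial subvariety, yet violates \eqref{sec5:eq5}; the same failure occurs for any subvariety contained in $\mm_{\mathcal{W}}$ for a proper $\mathcal{W}\subsetneq\mathcal{A}_{I_n}$. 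A parallel caution applies to \eqref{sec4:eq4}, whose derivation along the lines of Lemma \ref{sec4:lem10} presupposes $\widetilde{\Delta}_{\Upsilon_{\cc}}$ is exactly the fully invariant congruence defining $\mathcal{A}_{I_n}$.

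The paper does not prove the corollary by your route; it specializes Theorem \ref{sec5:thm14} (equivalently Corollary \ref{sec4:cor5}) to the congruence $\Theta$ with $\typ_{\widetilde{\Theta}}=\mathcal{A}_{I_n}$, using Theorem \ref{thm:zhao} to recognize $\Upsilon_{\cc_{I_n}}$ as the congruence presenting the $\mm_{\mathcal{A}_{I_n}}$-free algebra. Under that reading the right-hand side $\{\cc\in \Clo_{\fic}^{\mathbf{0}}(F_{\mathcal{A}_{I_n}}(X))\mid \cc\leq\cc_{I_n}\}$ parametrizes the $\mathcal{A}_{I_n}$-\emph{preserved} subvarieties of $\mm_{\mathcal{A}_{I_n}}$, not all of $\mathcal{L}(\mm_{\mathcal{A}_{I_n}})$; compare Example \ref{sec6:exm6}, where the four operators account for the four non-trivial ($\mathcal{SL}$-preserved) subvarieties but not the trivial one. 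So you must choose: either drop the superscript $\mathbf{0}$ and keep the full subvariety lattice on the left (which is essentially what your first two paragraphs establish), or keep $\Clo_{\fic}^{\mathbf{0}}$ and restrict the left-hand side to $\mathcal{A}_{I_n}$-preserved subvarieties. As written, your proposal proves neither, because the bridging verification fails.
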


\begin{exm}\label{ex:nb} Let $A\in \mathcal{A}_{I_n}$ be an
entropic idempotent $n$-semigroup. Then each subalgebra of $A$
is a closed $n$-semigroup, 
and for each $X\subseteq A$, one has
$[X]=\langle X\rangle$. Denote by $\mathcal{E}_{I_n}$ the
variety of all entropic and idempotent $n$-semigroups. By
Theorem \ref{thm:zhao} the free algebra in $\mm_{\mathcal{E}_{I_n}}$
over a set $X$ is isomorphic to the semilattice ordered $n$-semigroup of finitely generated subalgebras of the free
$\mathcal{E}_{I_n}$-algebra over
$X$. This is also a special case of
Theorem \ref{thm:rs} formulated below.
\end{exm}

Algebras in the variety $\mm_{\mathcal{E}_{I_n}}$ are examples
of \emph{modals} - semilattice ordered \emph{modes}: idempotent and
entropic algebras. Another examples of modals include distributive
lattices, dissemilattices (\cite{MR79}, see also Example 5.10) - algebras $(M,\cdot,+)$
with two semilattice structures $(M,\cdot)$ and $(M,+)$ in which the
operation $\cdot$ distributes over the operation $+$, and the
algebra $(\mathbb{R},\underline{I}^0,\max)$ defined on the set of
real numbers, where $\underline{I}^0$ is the set of the following
binary operations: $\underline{p}:\mathbb{R}\times
\mathbb{R}\rightarrow \mathbb{R}$; $(x,y)\mapsto (1-p)x+py$, for
each $p\in (0,1)\subset \mathbb{R}$.

Modals were introduced and investigated
in detail by A. Romanowska and J.D.H. Smith (\cite{RS85},
\cite{RS02}). As it was indicated in \cite{R05}, only the variety of
dissemilattices (\cite{MR79}) and the variety of entropic modals
(semilattice modes \cite{K95}), are well described.

Let us consider the variety $\mathcal{M}$ of all $\Om$-modes
and let $\mm_{\mathcal{M}}$ be the variety of all modals.
\begin{thm}\cite{RS02}\label{thm:rs}
The free algebra in $\mm_{\mathcal{M}}$ over a set $X$ is isomorphic to
the modal of finitely generated non-empty subalgebras of the free mode $(F_{\mathcal{M}}(X),\Om)$ over
$X$.
\end{thm}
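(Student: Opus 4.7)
The strategy is to adapt the proof of Theorem \ref{thm:zhao}, replacing ``closed $n$-semigroup generation'' by ordinary subalgebra generation $\cc_\emptyset$ from Example \ref{sec3:exm5}. For the mode $(F_{\mathcal{M}}(X), \Om)$, the operator $\cc_\emptyset\colon T\mapsto \langle T\rangle$ is algebraic and $\varnothing$-preserving, and satisfies $\om(\langle X_1\rangle, \ldots, \langle X_n\rangle) = \langle \om(X_1,\ldots, X_n)\rangle$ by Example \ref{sec3:exm5}. Hence the finitely generated non-empty subalgebras form a semilattice ordered algebra under the complex $\Om$-operations and union-closure, whose $\Om$-reduct is again in $\mathcal{M}$ (the mode identities are inherited from $F_{\mathcal{M}}(X)$ via the displayed equality). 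This algebra is the candidate free object in $\mm_{\mathcal{M}}$.

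Given $(A, \Om, +) \in \mm_{\mathcal{M}}$ and any map $h\colon X \to A$, I would first extend $h$ to an $\Om$-homomorphism $\overline{h}\colon F_{\mathcal{M}}(X) \to A$ by the universal property of the free mode, and then define the candidate extension to subalgebras by
$$\overline{\overline{h}}(\langle T\rangle) := \sum_{t\in T}\overline{h}(t),$$
for each finite non-empty $T\subseteq F_{\mathcal{M}}(X)$.

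The main obstacle is well-definedness: if $\langle T_1\rangle = \langle T_2\rangle$, the two sums must agree in $A$. The key observation, exactly as in the proof of Theorem \ref{thm:zhao}, is that every $s\in \langle T_1\rangle$ is of the form $w(t_1,\ldots, t_k)$ for some derived $\Om$-operation $w$ and $t_i\in T_1$; combining \eqref{monsub} (extended to derived operations) with idempotence of $(A,\Om)\in\mathcal{M}$ yields
$$\overline{h}(s) = w(\overline{h}(t_1),\ldots, \overline{h}(t_k)) \leq w\!\left(\textstyle\sum_{t\in T_1}\overline{h}(t),\ldots, \sum_{t\in T_1}\overline{h}(t)\right) = \sum_{t\in T_1}\overline{h}(t).$$
Summing over $s\in T_2\subseteq \langle T_1\rangle$ gives one inequality, and the reverse follows symmetrically.

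Verifying that $\overline{\overline{h}}$ is a $\{\Om,+\}$-homomorphism is then routine: preservation of $+$ is immediate from $\langle T_1\rangle + \langle T_2\rangle = \langle T_1\cup T_2\rangle$, while preservation of each $\om\in\Om$ follows from the identity $\om_{\cc_\emptyset}(\langle T_1\rangle,\ldots, \langle T_n\rangle) = \langle \om(T_1,\ldots, T_n)\rangle$ combined with the distributive laws in $(A,\Om,+)$, which unfold $\sum_{t_i\in T_i}\om(\overline{h}(t_1),\ldots, \overline{h}(t_n))$ as $\om(\sum_{t\in T_1}\overline{h}(t),\ldots, \sum_{t\in T_n}\overline{h}(t))$. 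Uniqueness of $\overline{\overline{h}}$ is automatic since the singletons $\{x\}$ for $x\in X$ generate the whole modal of finitely generated non-empty subalgebras under $\Om$ and $+$.
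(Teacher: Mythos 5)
Your proof is correct, and it follows essentially the same route the paper takes: the paper itself gives no proof of Theorem \ref{thm:rs} (it is quoted from \cite{RS02}), but your argument is exactly the construction used in the paper's proof of the analogous Theorem \ref{thm:zhao} --- extend $h$ to $\overline{h}$ on the free mode, then set $\overline{\overline{h}}(\langle T\rangle)=\sum_{t\in T}\overline{h}(t)$ --- transported from closed $n$-semigroups to subalgebras via $\cc_\emptyset$. In fact you supply the details (well-definedness via \eqref{monsub} plus idempotence of derived operations, and the homomorphism checks) that the paper's own terse proof of Theorem \ref{thm:zhao} leaves to the reader.
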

This implies that the mapping
\begin{equation*}
\cc_{<,>}:\p F_{\mathcal{M}}(X)\to \p F_{\mathcal{M}}(X); \;\; Y\mapsto\langle Y\rangle,
\end{equation*}
where $\langle Y\rangle$ denotes the subalgebra of
$(F_{\mathcal{M}}(X),\Om)$ generated by $Y\subseteq
F_{\mathcal{M}}(X)$, is of course a closure operator defined on
$F_{\mathcal{M}}(X)$, and what is more interesting, it is the greatest
element in the lattice $(\Clo_{\fic}^{\mathbf{0}}(F_{\mathcal{M}}(X)),\leq)$.

\begin{cor}
The lattice $(\mathcal{L}(\mm_{\mathcal{M}}),\subseteq)$ is isomorphic to
the lattice \\
$(\{\cc\in \Clo_{\fic}^{\mathbf{0}}(F_{\mathcal{M}}(X))\mid \cc\leq\cc_{<,>}\},\leq)$.
\end{cor}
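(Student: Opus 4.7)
The plan is to combine Theorem~\ref{thm:rs} with Theorem~\ref{sec5:thm14} (specialised to the variety of modes) and the observation, stated just before the corollary, that $\cc_{<,>}$ is the greatest element of $(\Clo_{\fic}^{\mathbf{0}}(F_{\mathcal{M}}(X)),\leq)$.

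First I would use Theorem~\ref{thm:rs} to identify $F_{\mm_{\mathcal{M}}}(X)$ with the modal of finitely generated non-empty subalgebras of $F_{\mathcal{M}}(X)$. Since this modal is precisely the quotient $\wpf F_{\mathcal{M}}(X)/\Upsilon_{\cc_{<,>}}$, and since in a mode every singleton is a subalgebra so that $\widetilde{\Upsilon_{\cc_{<,>}}}=\mathbf{0}_{F_{\mathcal{M}}(X)}$, the fully invariant congruences on $F_{\mm_{\mathcal{M}}}(X)$ correspond bijectively to those fully invariant congruences on $(\wpf F_{\mathcal{M}}(X),\Om,\cup)$ that contain $\Upsilon_{\cc_{<,>}}$.

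Second, I would pick $\Theta\in \Con_{\fic}(\wpf F_{\typ}(X))$ with $\typ_{\widetilde{\Theta}}=\mathcal{M}$ and apply Theorem~\ref{sec5:thm14}. This provides an order-isomorphism between the complete semilattice of $\mathcal{M}$-preserved subvarieties of $\mm_{\mathcal{M}}$ and the lattice $(\Clo_{\fic}^{\mathbf{0}}(F_{\mathcal{M}}(X)),\leq)$, where the reversed order on closure operators matches inclusion of congruences. Because $\cc_{<,>}$ is the greatest element of $(\Clo_{\fic}^{\mathbf{0}}(F_{\mathcal{M}}(X)),\leq)$, the side condition $\cc\leq\cc_{<,>}$ is automatic, so the set written in the corollary is the whole of $\Clo_{\fic}^{\mathbf{0}}(F_{\mathcal{M}}(X))$.

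The main obstacle is matching $\mathcal{L}(\mm_{\mathcal{M}})$ with the $\mathcal{M}$-preserved subvarieties appearing in Theorem~\ref{sec5:thm14}. To handle this, I would argue as follows: a subvariety $\mathcal{K}\subseteq\mm_{\mathcal{M}}$ corresponds to a fully invariant congruence $\widehat{\Psi}$ on $(\wpf F_{\mathcal{M}}(X),\Om,\cup)$ containing $\Upsilon_{\cc_{<,>}}$, and the associated closure operator $\cc_{\widehat{\Psi}}$ satisfies $\cc_{\widehat{\Psi}}\leq\cc_{<,>}$; the conditions \eqref{sec5:eq1} and \eqref{sec6:eq1} come from Lemmas~\ref{sec5:thm2} and \ref{sec6:lem2}, while \eqref{sec5:eq5} and \eqref{sec4:eq4} are inherited from $\cc_{<,>}$ via the identification in Step~1 (since $\widetilde{\Upsilon_{\cc_{<,>}}}=\mathbf{0}$ forces $\widetilde{\widehat{\Psi}}=\mathbf{0}$ as well, placing $\cc_{\widehat{\Psi}}$ into $\Clo_{\fic}^{\mathbf{0}}(F_{\mathcal{M}}(X))$). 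Combining with Step~2 gives the required lattice isomorphism, and the reverse direction follows from Corollary~\ref{sec4:cor5} applied in the same setting.
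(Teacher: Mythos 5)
Your Steps 1 and 2 are sound and match the paper's setup, but Step 3 contains a genuine logical error that the whole argument hinges on. You claim that because $\widetilde{\Upsilon_{\cc_{<,>}}}=\mathbf{0}_{F_{\mathcal{M}}(X)}$ and $\widehat{\Psi}\supseteq\Upsilon_{\cc_{<,>}}$, one gets $\widetilde{\widehat{\Psi}}=\mathbf{0}$. Monotonicity runs the wrong way: from $\widehat{\Psi}\supseteq\Upsilon_{\cc_{<,>}}$ you only obtain $\widetilde{\widehat{\Psi}}\supseteq\mathbf{0}$, which is vacuous. Concretely, the full congruence $\mathbf{1}$ on $\wpf F_{\mathcal{M}}(X)$ contains $\Upsilon_{\cc_{<,>}}$ yet $\widetilde{\mathbf{1}}=\mathbf{1}_{F_{\mathcal{M}}(X)}$; less trivially, for any proper non-trivial subvariety $\mathcal{W}\subsetneq\mathcal{M}$ the subvariety $\mm_{\mathcal{W}}\subseteq\mm_{\mathcal{M}}$ corresponds to a fully invariant congruence $\widehat{\Psi}\supseteq\Upsilon_{\cc_{<,>}}$ whose $\widetilde{\widehat{\Psi}}$ is the non-trivial fully invariant congruence defining $\mathcal{W}$. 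By Lemma~\ref{sec6:lem9} these are exactly the subvarieties that are \emph{not} $\mathcal{M}$-preserved, so your attempt to identify all of $\mathcal{L}(\mm_{\mathcal{M}})$ with the $\mathcal{M}$-preserved subvarieties (which is what routing the argument through Theorem~\ref{sec5:thm14} and Corollary~\ref{sec4:cor5} forces you to do) cannot succeed, and condition \eqref{sec5:eq5} is not ``inherited'' as you assert.

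The intended derivation does not pass through $\mathcal{M}$-preservation at all. Theorem~\ref{thm:rs} identifies the free modal over $X$ with $\wpf F_{\mathcal{M}}(X)/\Upsilon_{\cc_{<,>}}$, so (for $X$ infinite) the subvarieties of $\mm_{\mathcal{M}}={\rm HSP}(\wpf F_{\mathcal{M}}(X)/\Upsilon_{\cc_{<,>}})$ correspond to the fully invariant congruences of $(\wpf F_{\mathcal{M}}(X),\Om,\cup)$ that contain $\Upsilon_{\cc_{<,>}}$, and these correspond under Theorem~\ref{sec4:cor2} (as in Theorem~\ref{sec6:thm1}) to closure operators $\cc$ with $\cc\leq\cc_{<,>}$, the order reversal being built into the definition of $\leq$ on operators. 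In this reading the bound $\cc\leq\cc_{<,>}$ is precisely what encodes ``the quotient lies in $\mm_{\mathcal{M}}$'' and is doing real work; your conclusion that it is automatic (because $\cc_{<,>}$ is the greatest element of $\Clo_{\fic}^{\mathbf{0}}(F_{\mathcal{M}}(X))$) collapses the corollary to the statement of Theorem~\ref{sec5:thm14} about preserved subvarieties only, which is strictly weaker than what is claimed. You should replace Step 3 by this direct identification.
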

Although very little is known about subvarieties of the variety of modals, just recently we described (see \cite{PZ12}) some class of closure operators on the set $F_{\mathcal{M}}(X)$, which determine
non-trivial modal subvarieties
(but of course not all of them).

In Example \ref{sec3:exm5} we introduced the notion of so-called
$\Gamma$-sinks and defined the closure operator $\cc_{\Gamma}$.
In Example \ref{sec6:exm2} we have shown that each such operator on
$F_{\mathcal{M}}(X)$ corresponds to fully invariant congruence on
$(\wpf F_{\mathcal{M}}(X),\Om,\cup)$. It follows that each operator
$\cc_{\Gamma}$ determines a non-trivial modal subvariety
\begin{equation*}
\mathcal{M}_{\Gamma}:={\rm HSP}((S_{\Gamma}(F_{\mathcal{M}}(X)),\Om,+)),
\end{equation*}
where $(S_{\Gamma}(F_{\mathcal{M}}(X)),\Om,+)$ is a modal of finitely generated $\Gamma$-sinks of $(F_{\mathcal{M}}(X),\Om)$.

Moreover we have the following:
\begin{thm}The ordered set $(\{\cc_{\Gamma}\mid\Gamma
\subseteq \Om\},\leq)$ is a bounded meet-semilattice
$(\{\cc_{\Gamma}\mid\Gamma \subseteq \Om\},\wedge)$, and for
any $\Gamma_1,\Gamma_2\subseteq \Om$,
\begin{equation*}
\cc_{\Gamma_1}\wedge
\cc_{\Gamma_2}=\cc_{\Gamma_1\cup\Gamma_2}.
\end{equation*}
\end{thm}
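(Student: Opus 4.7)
The plan is to deduce the theorem from a convenient translation of the abstract order $\leq$ into a pointwise statement about closure operators. Concretely, I would first verify that, for any $\cc,\cc'\in \Clo_{\fic}(F_{\mathcal{M}}(X))$, one has $\cc\leq\cc'$ if and only if $\cc'(Y)\subseteq\cc(Y)$ for every $Y\subseteq F_{\mathcal{M}}(X)$. The non-trivial direction uses the standard identity $y\in\cc'(Y)\Leftrightarrow\cc'(Y\cup\{y\})=\cc'(Y)$: for finite $Y$ this together with $\Upsilon_{\cc'}\subseteq\Upsilon_{\cc}$ forces $y\in\cc(Y)$, and the algebraicity of both operators lifts the conclusion from finite to arbitrary $Y$. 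The reverse direction is a one-line closure calculation: if $\cc'(Q)=\cc'(R)$ then $Q\subseteq\cc'(Q)=\cc'(R)\subseteq\cc(R)$, whence $\cc(Q)\subseteq\cc(R)$, and symmetrically.

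Once this equivalence is in place, the anti-monotonicity $\Gamma_1\subseteq\Gamma_2\Rightarrow\cc_{\Gamma_2}\leq\cc_{\Gamma_1}$ is immediate, because every $\Gamma_2$-sink is automatically a $\Gamma_1$-sink, so $\langle Y\rangle_{\Gamma_1}\subseteq\langle Y\rangle_{\Gamma_2}$ pointwise. In particular $\cc_{\Gamma_1\cup\Gamma_2}$ lies below both $\cc_{\Gamma_1}$ and $\cc_{\Gamma_2}$ in $\leq$. To argue that it is the greatest lower bound within $\{\cc_\Gamma\mid\Gamma\subseteq\Om\}$, I would pass to closed-set systems: the $\cc_\Gamma$-closed subsets of $F_{\mathcal{M}}(X)$ are exactly the $\Gamma$-sinks together with $\emptyset$, and a direct reading of the definition shows that a subset is a $(\Gamma_1\cup\Gamma_2)$-sink exactly when it is simultaneously a $\Gamma_1$-sink and a $\Gamma_2$-sink. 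Hence the family of $\cc_{\Gamma_1\cup\Gamma_2}$-closed sets coincides with the intersection of the families of $\cc_{\Gamma_1}$- and $\cc_{\Gamma_2}$-closed sets. For any common lower bound $\cc_\Gamma$ in our set, the pointwise form of the order yields $\cc_{\Gamma_i}(Y)\subseteq\cc_\Gamma(Y)$, so every $\cc_\Gamma$-closed set is both $\cc_{\Gamma_1}$- and $\cc_{\Gamma_2}$-closed, hence $\cc_{\Gamma_1\cup\Gamma_2}$-closed; this gives $\cc_{\Gamma_1\cup\Gamma_2}(Y)\subseteq\cc_\Gamma(Y)$, and translating back through the equivalence produces $\cc_\Gamma\leq\cc_{\Gamma_1\cup\Gamma_2}$, as required.

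Boundedness then falls out of the same anti-monotonicity: $\cc_\emptyset$, whose closure of $Y$ is simply the subalgebra of $(F_{\mathcal{M}}(X),\Om)$ generated by $Y$, sits above every $\cc_\Gamma$ in $\leq$, while $\cc_\Om$ sits below. The main technical obstacle is the initial translation between $\leq$, pointwise inclusion, and closed-set families: the algebraicity of the operators is what legitimizes the reduction from arbitrary subsets of $F_{\mathcal{M}}(X)$ to finite test sets, and once this dictionary is cleanly set up every remaining step is essentially a set-theoretic reorganization of the definition of $\Gamma$-sink.
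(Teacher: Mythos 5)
The paper states this theorem without any proof, so there is no argument of the authors' to compare yours against; judged on its own, your proposal is correct and complete. The one genuinely delicate point is the dictionary you set up at the outset, namely that $\cc\leq\cc'$ (defined via $\Upsilon_{\cc'}\subseteq\Upsilon_{\cc}$) is equivalent to the pointwise reverse inclusion $\cc'(Y)\subseteq\cc(Y)$ for all $Y$; you handle both directions properly — the forward one via $y\in\cc'(Y)\Leftrightarrow\cc'(Y\cup\{y\})=\cc'(Y)$ on finite sets together with algebraicity to pass to arbitrary $Y$, the backward one by the two-line closure computation. With that dictionary in place, the remaining steps are all sound: anti-monotonicity of $\Gamma\mapsto\cc_{\Gamma}$ follows because enlarging $\Gamma$ only adds absorption conditions, so $\langle Y\rangle_{\Gamma_1}\subseteq\langle Y\rangle_{\Gamma_2}$ when $\Gamma_1\subseteq\Gamma_2$; the family of $(\Gamma_1\cup\Gamma_2)$-sinks is precisely the intersection of the families of $\Gamma_1$-sinks and $\Gamma_2$-sinks, which yields the greatest-lower-bound property by your closed-set argument; and boundedness by $\cc_{\emptyset}$ above and $\cc_{\Om}$ below is the same anti-monotonicity applied to $\emptyset\subseteq\Gamma\subseteq\Om$, consistent with the remark following the theorem in the paper.
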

The operator $\cc_{\emptyset}=\cc_{<,>}$ is the
greatest element and
$\cc_{\Om}$  is the least element in this semilattice.
Additionally note that
\begin{equation*}
\cc_{\Gamma_1}\vee
\cc_{\Gamma_2}\leq\cc_{\Gamma_1\cap\Gamma_2},
\end{equation*}
but they do not need to be equal, and the operator $\cc_{\Gamma_1}\vee
\cc_{\Gamma_2}$ does not even have to belong to the set $\{\cc_{\Gamma}\mid\Gamma \subseteq \Om\}$. Hence, such operators give another examples of modal subvarieties.

Since the operator $\cc_{\Om}$ is the least element in the semilattice, none of the subvarieties $\mathcal{M}_{\Gamma}$ is a variety of semilattice modes.

In general the following problem is still open:
\begin{pr}\label{pr1}
Describe the lattice of all idempotent subvarieties of the variety
of all semilattice ordered $\typ$-algebras.
\end{pr}

Solution of
Problem \ref{pr1} could result in solving
\begin{pr}\label{pr2}
Describe the subvariety lattice of all modals.
\end{pr}

Problem \ref{pr2} is closely connected to the still open question on
the identities satisfied in the varieties generated by modes of
submodes, thoroughly discussed in \cite{R05} and partially solved in
\cite{PZ12b}.
\vskip 3mm
{\bf Acknowledgements.}
We are very much obliged to the referee for having carefully read the paper, as well as for his valuable comments.

\end{document}